\documentclass[a4paper,10pt,twoside]{article}
\pdfoutput=1

\usepackage[activate={true,nocompatibility},final,tracking=true,kerning=true,spacing=true,factor=1100,stretch=0,shrink=0]{microtype}
 \SetTracking{encoding={*}, shape=sc}{30}
 \microtypecontext{spacing=nonfrench}

\usepackage[T1]{fontenc} 
\usepackage[utf8]{inputenc}
\usepackage[british]{babel}
\usepackage{mathrsfs}
\usepackage{amsfonts}
\usepackage{amsmath} 
\usepackage{amsthm}  
\usepackage{amssymb}
\usepackage{mathtools}
\usepackage{graphicx}
\usepackage{xcolor}
\usepackage{url}
\usepackage{cite}
\usepackage{xifthen}
\usepackage{stmaryrd}
\usepackage{titling}
\usepackage{centernot}
\usepackage{indentfirst}
\usepackage[affil-it]{authblk}
\usepackage{hyperref}
\hypersetup{colorlinks=false, pdfborder={0 0 0}}
\usepackage{tikz}
\usetikzlibrary{matrix,arrows,decorations.pathmorphing, shapes, decorations.fractals, patterns, positioning}
\tikzset{snake it/.style={decorate, decoration={snake, segment length=5.3pt, amplitude=0.25mm}}}
\usepackage[inline]{enumitem}
\setdescription{font=\normalfont\itshape}     
\usepackage{fancyhdr}
\pagestyle{fancy}                                       
\fancyhf{}                                              
\fancyhead[LE,RO]{\thepage}                             
\fancyhead[LO]{\scshape\nouppercase{\thetitle}}        
\fancyhead[RE]{\scshape\nouppercase{\leftmark}}

\newcommand{\into}{\hookrightarrow}
\newcommand{\implica}{\rightarrow}
\newcommand{\coimplica}{\leftrightarrow}
\renewcommand{\phi}{\varphi}
\renewcommand{\models}{\vDash}
\newcommand{\proves}{\vdash}
\newcommand{\pfin}{{\mathscr P}_{\mathrm{fin}}}
\newcommand{\restr}{\upharpoonright}
\newcommand*\bigcircled[1]{\tikz[baseline=(char.base)]{
    \node[shape=circle,draw,inner sep=1pt] (char) {#1};}} 
\newcommand{\monster}{\mathfrak U}
\newcommand{\smallsubset}{\mathrel{\subset^+}}
\newcommand{\smallprec}{\mathrel{\prec^+}}
\newcommand{\satext}{\mathrel{^+\!\!\succ}}
\newcommand{\invtypes}{S^{\textnormal{inv}}}
\newcommand{\invext}{\mid}
\newcommand{\bla}[4]{{#1}_{#2}#3\ldots#3{#1}_{#4}}
\newcommand{\wort}{\mathrel{\perp^{\!\!\mathrm{w}}}}
\newcommand{\nwort}{\mathrel{{\centernot\perp}^{\!\!\mathrm{w}}}}
\newcommand{\invtilde}{\operatorname{\widetilde{Inv}}(\monster)}
\newcommand{\invtildeof}[1]{\operatorname{\widetilde{Inv}}({#1})}
\newcommand{\equidom}{\mathrel{\equiv_\mathrm{D}}}
\newcommand{\nequidom}{\mathrel{{\centernot\equiv}_{\mathrm{D}}}}
\newcommand{\domeq}{\mathrel{\sim_\mathrm{D}}}
\newcommand{\doms}{\mathrel{\ge_\mathrm{D}}}
\newcommand{\ndoms}{\mathrel{{\centernot\ge}_{\mathrm{D}}}}
\newcommand{\pow}[2]{#1^{(#2)}}
\newcommand{\meet}{\sqcap}
\newcommand{\lmt}{L_\mathrm{mt}}
\newcommand{\tdmt}{\mathsf{DMT}}
\DeclareMathOperator{\Th}{Th}
\DeclareMathOperator{\tp}{tp}
\DeclareMathOperator{\dcl}{dcl}
\DeclareMathOperator{\aut}{Aut}
\DeclarePairedDelimiter{\set}{\{}{\}}
\DeclarePairedDelimiter{\abs}{\lvert}{\rvert}
\DeclarePairedDelimiter{\class}{\llbracket}{\rrbracket}

\theoremstyle{definition}
\newtheorem{defin}{Definition}[section]
\newtheorem{thm}[defin]{Theorem}
\newtheorem{pr}[defin]{Proposition}
\newtheorem{co}[defin]{Corollary}
\newtheorem{lemma}[defin]{Lemma}
\newtheorem{notation}[defin]{Notation}
\newtheorem{eg}[defin]{Example}
\newtheorem{rem}[defin]{Remark}
\newtheorem{fact}[defin]{Fact}
\newtheorem{ass}[defin]{Assumption}
\newtheorem*{claim}{Claim}
\newtheorem{alphthm}{Theorem}

\let\oldqed\qedsymbol
\newcommand{\qedclaim}{\mbox{$\underset{\textsc{claim}}{\oldqed}$}}
\makeatletter
\newenvironment{claimproof}[1][\it Proof of Claim]{
\let\qedsymbol\qedclaim
  \par
  \pushQED{\qed}%
  \normalfont \topsep6\p@\@plus6\p@\relax
  \trivlist
\item[\hskip\labelsep
  \upshape
  #1\@addpunct{.}]\ignorespaces
}{%
  \popQED\endtrivlist\@endpefalse
}
\makeatother
\let\qedsymbol\oldqed

\setlength{\headheight}{15pt}   

\makeatletter
\newcommand{\subjclass}[2][2020]{%
  \let\@oldtitle\@title%
  \gdef\@title{\@oldtitle\footnotetext{#1 \emph{Mathematics subject classification.} #2}}%
}
\newcommand{\keywords}[1]{%
  \let\@@oldtitle\@title%
  \gdef\@title{\@@oldtitle\footnotetext{\emph{Key words and phrases.} #1.}}%
}
\makeatother

\author{Rosario Mennuni%
  \thanks{email: \url{R.Mennuni@tutanota.com} \textsc{orcid}: \url{https://orcid.org/0000-0003-2282-680X}}}
\affil{University of Leeds}
\title{Weakly binary expansions of dense meet-trees}
\keywords{domination, domination monoid, invariant types, meet-tree, semilinear order, weak binarity}
\subjclass{Primary: 03C45. Secondary: 06A12.}

\setcounter{secnumdepth}{1}

\begin{document}
\maketitle
\begin{abstract}
  We compute the domination monoid in the theory $\tdmt$ of dense meet-trees. In order to show that this monoid is well-defined, we  prove \emph{weak binarity} of $\tdmt$ and,  more generally, of certain expansions of it by binary relations on sets of open cones, a special case being the theory $\mathsf{DTR}$ from~\cite{estevankaplan}. We then describe the domination monoids of such expansions in terms of those of the expanding relations.
\end{abstract}
If asked what a \emph{tree} is, a mathematician has a number of options to choose from. The graph theorist's answer will probably contain the words ``acyclic'' and ``connected'', while the set theorist may have in mind certain sets of sequences of natural numbers. In this paper we are instead concerned with   \emph{lower semilinear orders}: posets where the set of predecessors of each element is linearly ordered.

More specifically, a \emph{meet-tree} is a lower semilinear order $<$ in which each pair of elements $a,b$ has a greatest common lower bound, their \emph{meet} $a\meet b$. When viewed as $\set{<, \meet}$-structures, finite meet-trees form an amalgamation class, hence have a Fra\"iss\'e limit, the universal homogeneous countable meet-tree. Its complete first-order theory $\tdmt$ is that of \emph{dense meet-trees}: dense lower semilinear orders with everywhere infinite ramification.

Such structures have received a certain amount of model-theoretic attention in the recent (and not so recent) past. They appear in the classification of countable $2$-homogeneous trees from~\cite{droste}, and have since been  important in the theory of permutation groups, see for instance~\cite{treelike,dhmdmt,relrelbet}. 
  More recently, they were shown to be dp-minimal in~\cite{simondmt}, and the automorphism group of the unique countable one was studied in~\cite{autdmt}, while the interest in similar structures goes back at the very least to~\cite{peretyiatkin, woodrowtrees}, where they were used as a base to produce examples in the context of Ehrenfeucht theories.  
Here we study $\tdmt$, and some of the expansions defined in~\cite{estevankaplan}, from the viewpoint of domination, in the sense of~\cite{invbartheory}.

One motivation for such a study comes from valuation theory. The nonzero points of a valued field $K$ can notoriously be identified with the \emph{branches} of a meet-tree, that is, its maximal linearly ordered subsets. This identification is used, for instance, to endow $K$ with a C-relation; see~\cite{hollyvftrees, ominvar}. Viewing the residue field $k$ of $K$  as a set of open valuation balls yields a correspondence between $k$  and, for an arbitrary but fixed point $g$ of the underlying tree, the set of \emph{open cones} above $g$: the equivalence classes of the relation $E(x,y)\coloneqq x\meet y>g$ defined on the set of points above $g$. If $K$ has pseudofinite residue field, then $k$ interprets a structure  elementarily equivalent to the Random Graph (see~\cite{duret,rhgpsf}); it is therefore interesting to study the theory of a dense meet-tree  with a Random Graph structure on each set of open cones above a point. This theory was used  in~\cite{estevankaplan}, where it was dubbed $\mathsf{DTR}$,   to show that restrictions  to nonforking bases need not preserve  $\mathsf{NIP}$.

Another motivation is rooted in the study of \emph{invariant types}: types over a saturated model $\monster$ of a first order theory which are fixed, under the natural action of $\aut(\monster)$ on the space $S(\monster)$ of types, by the stabiliser of some small set. The space of invariant types is a semigroup when equipped with the tensor product, and can be endowed with the preorder of \emph{domination}, where a type $p(x)$ dominates a type $q(y)$ iff $q(y)$ is implied by the union of $p(x)$ with a small type $r(x,y)$ consistent with $p(x)\cup q(y)$. The induced equivalence relation, called  \emph{domination-equivalence}, may or may not be a congruence with respect to the tensor product, and  some conditions ensuring this to be the case were isolated in~\cite{invbartheory}.  One of the main results in the present work is a proof that one of them, \emph{weak binarity} (Definition~\ref{defin:wb}), is satisfied by $\tdmt$, and by certain expansions of the latter by binary structures on sets of open cones, a special case of which is $\mathsf{DTR}$. This guarantees the semigroup operation to descend to the quotient, so we may proceed to calculate the \emph{domination monoid} $\invtilde$.

The  paper is structured as follows. After briefly reviewing standard definitions and facts about dense meet-trees and invariant types in Section~\ref{sec:prelim}, we recall in  Section~\ref{sec:wbin} the definition of \emph{weak binarity} and prove that, despite not being binary, $\tdmt$ and all of its \emph{binary cone-expansions}  
(Definition~\ref{defin:binconexp}) are weakly binary. This is in particular the case for  
$\mathsf{DTR}$.
\begin{alphthm}[Theorem~\ref{thm:expdmt}]
  The theory of dense meet-trees is weakly binary, and so is each of its  binary cone-expansions.
\end{alphthm}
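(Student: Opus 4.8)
\section*{Proof proposal}

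The plan is to reduce weak binarity (Definition~\ref{defin:wb}) to a purely combinatorial statement about quantifier-free types in the tree language, and then to establish that statement by a direct, if somewhat delicate, analysis. Recall that $\tdmt$ eliminates quantifiers in $\set{<,\meet}$, that every term in this language is an iterated meet $a_{i_1}\meet\dots\meet a_{i_k}$ of variables and parameters (as $\meet$ is an associative, commutative, idempotent operation), and that in a meet-tree $\bigl(\bigwedge_{s\in S}a_s\bigr)\meet c=\min_{s\in S}(a_s\meet c)$, since the elements $a_s\meet c$ are all predecessors of $c$ and hence pairwise comparable. Consequently the quantifier-free type over a model $M$ of a tuple $\bar a=(a_1,\dots,a_n)$ is controlled by the binary meets $a_i\meet a_j$ and $a_i\meet m$ ($m\in M$) together with the $<$-relations holding among all such elements. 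Unravelling Definition~\ref{defin:wb} and using invariance to pass to a model, it suffices to prove: if $p_1,\dots,p_n$ are global types, each invariant over $M$, and $\bar a$ realises $(p_1\otimes\dots\otimes p_n)\restr M$, then $\tp(\bar a/M)$ is implied by $\bigcup_i\tp(a_i/M)$ together with $\bigcup_{i<j}\tp(a_ia_j/M)$; reduction from tuples of variables to singletons is routine, so I state everything for $1$-types $p_i$.

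The core is a ``genericity of position'' lemma for realisations of products of invariant types, drawing on the description of invariant $1$-types over models of $\tdmt$ reviewed in Section~\ref{sec:prelim}. Writing $\bar a\restr{<i}=(a_1,\dots,a_{i-1})$, the type $\tp(a_i/M\bar a\restr{<i})$ is the canonical $M$-invariant extension of $p_i$, which pins down the position of $a_i$ over $M\bar a\restr{<i}$ to be as generic as $p_i$ allows: $a_i$ either climbs to the top of an $M$-definable partial branch, or enters a fresh open cone, and in the latter case its branch splits off from the branch of each previously placed point at a height that is either already named by $M$ or generic (in particular distinct from, and comparable to, all the other relevant splitting heights). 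From this one reads off that each $a_i\meet a_j$, and the relative $<$-order of $a_i\meet a_k$ and $a_j\meet a_k$, is already forced by the two-variable types $\tp(a_ia_j/M)$, $\tp(a_ia_k/M)$, $\tp(a_ja_k/M)$: the configuration that would create genuinely ternary content in an arbitrary meet-tree --- two branches travelling together past their common meet with a third --- does not arise in a generic configuration. Feeding this back through the displayed identity and quantifier elimination yields the required implication, hence weak binarity of $\tdmt$.

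For a binary cone-expansion (Definition~\ref{defin:binconexp}) the argument is modular. Such a theory eliminates quantifiers in a language obtained from $\set{<,\meet}$ by adding, fibrewise over each point $g$, a structure on the set of open cones above $g$ whose theory is binary; accordingly the quantifier-free type over $M$ of $\bar a$ decomposes as the $\tdmt$-quantifier-free type of $\bar a$ over $M$ together with, for each relevant meet point $g$, the induced (binary) structure-type of the cones above $g$ of those $a_i$ lying above $g$. The $\tdmt$-part is handled by the previous step; the cone-part is governed by a binary theory, which is in particular weakly binary, so it too is determined by pairwise data; and since the cone relations interact with $<$ and $\meet$ only through which cone an element occupies, the two descriptions do not interfere beyond pairs. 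Assembling these, $\tp(\bar a/M)$ is again implied by the $1$- and $2$-variable types, giving weak binarity of the expansion, and in particular of $\mathsf{DTR}$.

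I expect the main obstacle to be the pure meet-tree case: making the genericity-of-position lemma precise and checking, case by case according to the possible shapes of invariant $1$-types over a model, that no ternary data survives --- in particular handling the borderline situations in which a splitting height of $\bar a$ coincides with, or is interleaved among, parameters from $M$, and confirming that the reduction via the semilattice identity genuinely terminates at binary meets. Once that is in place, the cone-expansion step should be essentially bookkeeping, provided the quantifier-free axiomatisation in Definition~\ref{defin:binconexp} is arranged so that tree-data and cone-data are orthogonal.
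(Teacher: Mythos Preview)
Your reduction step contains a genuine gap. Weak binarity (Definition~\ref{defin:wb}) asks that \emph{for every} pair of tuples $a,b$ with invariant global types there be a small $A$ with $\tp(a/\monster)\cup\tp(b/\monster)\cup\tp(ab/A)\proves\tp(ab/\monster)$. After breaking $a,b$ into singletons you immediately assume that $\bar a$ realises $(p_1\otimes\dots\otimes p_n)\restr M$, and the whole ``genericity of position'' argument relies on this: you use that $\tp(a_i/M\bar a_{<i})$ is the canonical $M$-invariant extension of $p_i$. But an arbitrary tuple with invariant $1$-subtypes need not look like a tensor product, over $M$ or anywhere else. For a concrete obstruction, let $p$ be the kind~(II) type sprouting from $g$. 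Then $p(x_0)\otimes p(x_1)\proves x_0\meet x_1=g$, yet the $\set g$-invariant $2$-type $q(x_0,x_1)$ asserting $g<x_0<x_1$ and $x_i\meet b=g$ for all $b\in\monster$ with $b>g$ has both $1$-subtypes equal to $p$ while $q\proves x_0\meet x_1>g$. This is exactly the configuration you rule out by fiat (``two branches travelling together past their common meet with a third\ldots does not arise in a generic configuration''), but weak binarity must cover it. No reduction from the general case to tensor products is supplied, and none is available: it is simply false that every invariant $n$-type is a product of its $1$-subtypes.

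The paper's argument is structurally different and sidesteps this issue entirely. It does not use invariance at all, nor the classification of invariant $1$-types. Instead, Lemma~\ref{lemma:meetclosedmodulofinite3} shows that for \emph{any} finite tuple $b$ one can find finitely many anchor points $c\subseteq\monster$ (roughly, one point above each bounded cut appearing in $b$, chosen in the correct open cone when relevant) so that $\monster b d$ is closed under meets, where $d$ enumerates $\dcl^{\lmt}(bc)\setminus b$. One then checks directly, case by case on the position of $e\meet f$ for $e\in bd\setminus\monster$ and $f\in\monster$, that $\tp(b^0/\monster)\cup\tp(b^1/\monster)\cup\tp(b^0b^1/c)$ decides every atomic formula, first in $\lmt$ and then for each $R_j$. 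The non-generic configurations that your approach excludes are handled here by the anchors $a_e$: whenever $e$ and $f$ meet above $C^\monster_e$, the point $a_e\in c$ lies in the same open cone above $e\meet f$ as $f$, so $R_j(e,f)\leftrightarrow R_j(e,a_e)$ is already decided by $\tp(b/c)$. Your modular treatment of the cone-expansion is in the right spirit, but it rests on the tree case, which as written does not go through.
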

\noindent Hence the monoid $\invtilde$ is well-defined in such theories, and we  proceed to compute it. The case of pure dense meet-trees is handled in Section~\ref{sec:invtildedmt}. 
\begin{alphthm}[Theorem~\ref{thm:dmt}]
  If $\monster$ is a monster model of the theory of dense meet-trees, then there is a set $X$ such that  \[\invtilde\cong\pfin(X)\oplus\bigoplus_{g\in \monster} \mathbb N\] where $\pfin(X)$ is the  upper semilattice of finite subsets of $X$. 
\end{alphthm}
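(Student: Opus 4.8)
The strategy is to identify explicit generators of $\invtilde$ and then to show that they satisfy exactly the relations encoded by the right-hand side. Since $\tdmt$ is weakly binary (Theorem~\ref{thm:expdmt}), $\invtilde$ is a genuine monoid, and the claim is that it is generated by two families of classes: for each $g\in\monster$, the class $[p_g]$ of the type ``$x$ lies in an open cone above $g$ containing no point of $\monster$'' — essentially the generic type of the (pure, infinite) set of open cones above $g$; and the classes of the remaining atomic invariant $1$-types, which describe elements sitting at a cut of a branch of $\monster$ (for instance just below a point $g\in\monster$) and which I index by a set $X$ of cut data. The reduction to these generators is the first main task: given a realization $\bar a$ of an invariant type, one looks at the finite meet-tree $\langle\bar a\rangle$ it generates and at how $\langle\bar a\rangle$ attaches to $\monster$. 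The basic move is that two elements lying in one and the same open cone new over $\monster$ are domination-equivalent to a single such element — the first reconstructs the second by a single formula, and conversely by a projection — and iterating this collapse through $\langle\bar a\rangle$ turns $\tp(\bar a/\monster)$ into something $\domeq$ to a tensor product of copies of the $p_g$'s (one for each open cone above a point of $\monster$ that $\langle\bar a\rangle$ enters, counted with multiplicity) together with finitely many of the branch-end types.

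Two local computations then pin down the operation on the generators. First, each branch-end type $q$ is idempotent: as in dense linear orders, a single formula placing a second realization just below the relevant point or cut and just below the first realization (or branching off just below it) shows that $q(x)$ together with that formula implies $q(x)\otimes q(x')$, so $q\otimes q\domeq q$; the parameters used are small because an invariant cut of $\monster$ is controlled by a small amount of data. Secondly, the cone types are not idempotent. Writing $p_g^{(n)}$ for the $n$-th tensor power — which describes $n$ open cones above $g$ branching apart at $g$ — one has $p_g^{(n)}\doms p_g^{(m)}$ if and only if $n\ge m$: the implication ``$\ge$'' is witnessed by a projection, while for the converse any small type witnessing $p_g^{(n)}\doms p_g^{(m)}$ must be consistent with $p_g^{(m)}$ itself and therefore cannot force any of the target coordinates into an open cone meeting $\monster$; since its (small) parameter set leaves such cones available, it cannot manufacture an extra cone independent of the $n$ source cones. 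Concretely this reduces to the classical computation for an infinite set, and yields $\langle[p_g]\rangle\cong\mathbb N$.

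It remains to rule out further relations, that is, to see that the monoid generated is the internal direct sum $\pfin(X)\oplus\bigoplus_{g\in\monster}\mathbb N$. The key point is orthogonality: open cones above distinct points $g\ne g'$, and the branch-ends, pairwise fail to dominate one another ``with room to spare'', so the multiplicity $n_g$ with which $[p_g]$ occurs in a class is a $\domeq$-invariant, independent of the $n_{g'}$ and of the idempotent part, while ``which branch-end class is dominated by a given class'' is well defined and additive under $\otimes$, so the branch-end classes freely generate $\pfin(X)$. Hence each element of $\invtilde$ is represented uniquely by a finitely supported family $(n_g)_{g\in\monster}$ of cone-multiplicities together with a finite subset of $X$, which is exactly the asserted isomorphism.

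The step I expect to be the real obstacle is the reduction of the first paragraph carried out in full generality: the case analysis on how $\langle\bar a\rangle$ attaches to $\monster$ — keeping track of which parts of $\langle\bar a\rangle$ create new elements of $\dcl(\monster\bar a)$, and treating open cones hanging off a cut rather than off a point of $\monster$ — together with the orthogonality bookkeeping needed for the freeness claims of the third paragraph. Computing an individual tensor product is routine; the work lies in assembling enough domination invariants to certify that $\invtilde$ is \emph{freely} built from the cone types and the branch-end types, with no collapse across the two summands.
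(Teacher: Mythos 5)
Your outline follows the same route as the paper: the generators you pick and the two local computations coincide with Propositions~\ref{pr:treebasic} and~\ref{pr:tdmt_types_behaviour} (kind (II) types give the $\mathbb N$ summands via the smallness argument against $\pow pm\doms \pow pn$ for $m<n$; the cut/cone types are idempotent and give $\pfin(X)$; well-definedness comes from weak binarity). But what you flag as ``the real obstacle'' is exactly the paper's main technical step, Proposition~\ref{pr:tdmtgen} (every invariant type is domination-equivalent to a product of invariant $1$-types), and your sketch does not contain the idea needed for it. Your basic move --- several points in one and the same new open cone above a point $g\in\monster$ collapse to a single such point --- is correct, but it only treats attachments at points of $\monster$. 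The case that actually decides the shape of the answer is a cluster of coordinates whose common cut $C$ has no maximum in $\monster$ (kinds (Ib)/(IIIb)): there $\dcl(\monster\bar a)$ may branch in several directions above $C$, and one must show the whole cluster is domination-equivalent to a \emph{single} type of kind (Ib); if such branching instead produced independent generators, these cuts would contribute further copies of $\mathbb N$ and the asserted isomorphism would fail. The paper proves this by choosing $d>C$ in $\monster$, considering the finite chain $H\subseteq\dcl(\bar a d)$ of points below $d$, splitting according to whether $C$ has small cofinality or the interval above it has small coinitiality, and witnessing domination by formulas such as $h_n(x)<y$ or $y<h_0(x)$; nothing in your proposal substitutes for this, nor for the collapse of kinds (IIIa)/(IIIb) onto (Ia)/(Ib) given in Proposition~\ref{pr:treebasic}.

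A second gap is in the assembly. You assert that after the collapse $\tp(\bar a/\monster)$ is domination-equivalent to the \emph{tensor product} of the cluster types, and that the multiplicities $n_g$ and the $\pfin(X)$-part are $\domeq$-invariants, but you give no mechanism for either. The paper obtains the product decomposition from pairwise weak orthogonality of the clusters (Proposition~\ref{pr:treebasic}) combined with the NIP fact that a pairwise weakly orthogonal family of invariant types has complete union (Fact~\ref{fact:wortnipinv}), and the ``no collapse across summands'' bookkeeping from the general facts that domination preserves and respects $\wort$. Some such input is genuinely needed: pairwise failure of domination between generators does not by itself exclude relations among their products, so your freeness claim is not yet justified as stated.
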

\noindent In the same section, we take the opportunity to record an instance of a theory where domination differs from $\mathrm{F}^\mathrm{s}_{\kappa}$-isolation in the sense of Shelah, Example~\ref{eg:semineeded}.
Theorem~\ref{thm:dmt} is generalised in Section~\ref{sec:invtildeexp} to \emph{purely binary cone-expansions} (Definition~\ref{defin:binconexp}), such as $\mathsf{DTR}$. If $\monster$ is a monster model of such an expansion, $X$ is given by applying Theorem~\ref{thm:dmt} to the underlying tree and, for  $g\in \monster$, we denote by $O_g$ the structure on the set of open cones above $g$, we obtain the following.
\begin{alphthm}[Theorem~\ref{thm:invtildexp}]
  If $T$ is a purely binary cone-expansion of $\tdmt$,  \[\invtilde \cong \pfin(X)\oplus\bigoplus_{g\in \monster} \invtildeof{O_g}\]
\end{alphthm}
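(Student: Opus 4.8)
The plan is to mimic the computation of Section~\ref{sec:invtildedmt} for the pure case, tracking the extra data contributed by the cone-relations; that data will turn out to be governed precisely by the monoids $\invtildeof{O_g}$ (well-defined since each $O_g$, being binary, is weakly binary). By Theorem~\ref{thm:expdmt}, $T$ is weakly binary, so by~\cite{invbartheory} the tensor product descends to a commutative-monoid operation on domination-equivalence classes of global invariant types; hence $\invtilde$ is a genuine monoid, and it suffices to (i) list the global invariant $1$-types of $T$ up to domination-equivalence, (ii) compute the weak-orthogonality relation among them --- in particular showing that $\invtilde$ is generated by the classes of invariant $1$-types --- and (iii) read off the monoid structure.

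For (i), restrict a global invariant $1$-type $p$ of $T$ to the meet-tree reduct. By the classification behind Theorem~\ref{thm:dmt}, $p\restriction\tdmt$ is domination-equivalent either to a cut/branch type, one of the types accounting for $\pfin(X)$, or to a ``generic new open cone above $g$'' type $p_g$ for some $g\in\monster$. In the first case the cone-relations add nothing up to domination-equivalence: the realizations sit on a branch $B\subseteq\monster$, hence in \emph{named} cones above the points of $B$ below them, and since $T$ is a \emph{purely} binary cone-expansion the only new atomic information is relations between cones above a common point --- so $p$ is determined, up to domination-equivalence, by its tree reduct, and moreover the same cuts stay invariant as in $\tdmt$. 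Thus these types assemble into exactly the summand $\pfin(X)$ of Theorem~\ref{thm:dmt}. In the second case, the information $p$ carries over $p_g$ is precisely the type realized, over the set of open cones above $g$, by the new cone containing $x$; for $p$ to be invariant this must be a global invariant $1$-type $r$ of $O_g$, and conversely each such $r$ arises this way. Write $p_{g,r}$ for the resulting type.

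The crux is a transfer lemma: the assignment $r\mapsto[p_{g,r}]$ is a monoid embedding $\invtildeof{O_g}\hookrightarrow\invtilde$. Since the set of open cones above $g$ is stably embedded and, in its induced structure, a monster model of $\Th(O_g)$, a witness of $r\doms r'$ in $O_g$ lifts to a witness of $p_{g,r}\doms p_{g,r'}$ in $T$ by placing the tree-coordinates just above $g$ and carrying over the cone-domination data, while conversely a small type witnessing $p_{g,r}\doms p_{g,r'}$ in $T$ pushes down to $O_g$ by passing to cones above $g$; likewise $p_{g,r}\otimes p_{g,r'}\equidom p_{g,\,r\otimes r'}$, so the map is a homomorphism, injective because the $O_g$-type is recovered from $p_{g,r}$. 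For (ii), one checks as in the pure case that cut/branch types are weakly orthogonal to every $p_{g,r}$, and that $p_{g,r}\wort p_{g',r'}$ whenever $g\neq g'$ --- a branch and a cone above a point, and cones above distinct points, do not interact --- whence $\invtilde$ splits as the stated direct sum \[\invtilde\cong\pfin(X)\oplus\bigoplus_{g\in\monster}\invtildeof{O_g},\] which specialises to Theorem~\ref{thm:dmt} when each $O_g$ is a pure infinite set, so that $\invtildeof{O_g}\cong\mathbb N$.

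I expect the main obstacle to be the downward direction of the transfer lemma: from a small type $r_{\mathrm{dom}}(x,y)$ over $\monster$ witnessing domination in $T$ one must isolate a witness in $O_g$, disentangling the tree-order content of $r_{\mathrm{dom}}$ (the positions of $x$ and $y$ relative to $g$ and to each other) from its cone-structure content; the same kind of analysis, run in the presence of the cone-relations, is also what is needed to exclude any ``hidden'' domination between a cut/branch type and a tensor of cone types, which would collapse the direct-sum decomposition. This is precisely where stable embeddedness of the cone-sets and the purely-binary hypothesis do the essential work.
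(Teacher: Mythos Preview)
Your overall architecture---split invariant types by graft and sprout, use weak orthogonality to obtain a direct sum, and transfer along the projection to $O_g$ via stable embeddedness---is the same as the paper's. Two points, however, are genuine gaps rather than details to be filled in.

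First, your step~(ii) asserts that $\invtilde$ is generated by classes of invariant $1$-types. This is neither proved in the paper nor true in general: if some $\invtildeof{O_g}$ fails to be generated by $1$-types, then so does $\invtilde$, and nothing in the hypotheses rules this out. The paper sidesteps the issue by proving instead an orthogonality-lifting lemma (Lemma~\ref{lemma:stillorthgonal}): whenever the $\lmt$-reducts of finitely many invariant types have complete union in $\tdmt$, the types themselves have complete union in $T$. Applied to the partition of the coordinates of an arbitrary invariant $p$ by graft and sprout, this shows that $p$ equals the tensor product of its constant-graft and constant-sprout subtuples. Graft pieces then collapse to $1$-types (Lemma~\ref{lemma:stillidempotent}), but sprout pieces are handled by pushing the whole $n$-type forward to $O_g$, never reducing to $1$-types. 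Your transfer lemma is stated for arbitrary $r$, so you have the right tool; the error is only in the surjectivity step.

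Second, you take for granted that the $\lmt$-reduct of a global invariant type is invariant in $\monster\restriction\lmt$, so that the kind classification from Theorem~\ref{thm:dmt} applies. This fails for binary cone-expansions in general---a unary predicate naming an unbounded branch already breaks it---and is exactly where ``purely'' is needed. The paper isolates this as Corollary~\ref{co:restrinv}, proved via a tameness analysis of formulas in one free variable (Proposition~\ref{pr:nounary}). You locate the essential role of the purely-binary hypothesis in the downward direction of the transfer lemma, but once $g$ is named and the sort $O_g$ adjoined, that direction is routine; the real work is earlier, in ensuring that cuts of invariant types remain bounded so that the graft/sprout dichotomy is available at all.

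A minor third point: your treatment of graft types (``the realizations sit on a branch $B\subseteq\monster$, hence in named cones'') is only correct for kinds (Ia) and (Ib); for kinds (IIIa) and (IIIb) the realization branches into a new cone above a point not in $\monster$, and one must argue, as in Lemma~\ref{lemma:stillidempotent}, that any $f\in\monster$ nonetheless lies in the same open cone above the meet as some fixed $a\in\monster$.
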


\paragraph{Acknowledgements}
A large part of this paper first appeared in the author's PhD thesis~\cite{mythesis},  supervised by Dugald Macpherson and Vincenzo Mantova. The material and presentation has been greatly improved by their comments and suggestions, on~\cite{mythesis} and on drafts of this manuscript; in particular, Definition~\ref{defin:tamefla} was suggested by Mantova. I would like to thank Itay Kaplan for bringing to my attention cone-expansions of dense meet-trees, and Predrag Tanovi\'c for the fruitful discussions on the global Rudin--Keisler preorder, which inspired Example~\ref{eg:semineeded}. Part of this research has been funded by a Leeds Anniversary Research Scholarship.

 \section{Preliminaries}\label{sec:prelim}
 In what follows, lowercase Latin letters may denote \emph{tuples} of variables or elements of a model. The length of a tuple is denoted by $\abs \cdot$, and its coordinates  will be denoted by subscripts, starting with $0$; we may write, for example,  $a=(\bla a0,{\abs a-1})\in M^{\abs a}$ or, with abuse of notation, simply $a\in M$. Concatenation is denoted by juxtaposition, and elements of a sequence of tuples by superscripts.   For instance, if we write $a=a^0a^1$ then  $a_{\abs{a^0}}$ equals $a^1_0$, the first element of $a^1$. Tuples may be treated as sets, in which case juxtaposition  denotes union, as in $Ab=A\cup \set{b_i\mid i<\abs b}$. \emph{Type} means ``complete type in finitely many variables''.

   Proofs regarding trees have a tendency to split in cases and subcases. As they become much easier to follow if the objects in them are drawn as soon as they appear in the proof, the reader is encouraged to reach for writing devices, preferably capable of producing different colours.
 \subsection{Invariant types}
Fix a complete first-order theory $T$ with infinite models,  a  sufficiently large cardinal $\kappa$, and  a $\kappa$-saturated and $\kappa$-strongly homogeneous $\monster\models T$. \emph{Small} means ``of cardinality strictly less than $\kappa$''; if $A$ is a small subset of $\monster$, we denote this by $A\smallsubset \monster$, or $A\smallprec \monster$ if additionally $A\prec \monster$.   \emph{Global type} means  ``type over $\monster$''.

 \begin{defin}\label{defin:invtp}\*
  \begin{enumerate}
  \item Let $A\subseteq B$.  A  type $p(x)\in S(B)$ is \emph{$A$-invariant}  iff for all $\phi(x;y)\in L$ and $a\equiv_A b$ in $B^{\abs y}$ we have $p(x)\proves \phi(x;a)\coimplica \phi(x;b)$. A global type $p(x)\in S(\monster)$ is \emph{invariant} iff it is $A$-invariant for some $A\smallsubset \monster$. Such an $A$ is called a \emph{base} for $p$.
  \item  If $p(x)\in S(\monster)$ is $A$-invariant and $\phi(x;y)\in L(A)$, write 
\[
(d_p\phi(x;y))(y)\coloneqq\set{\tp_y(b/A)\mid \phi(x;b)\in p, b\in \monster}
\]
The map $\phi\mapsto d_p\phi$ is called the \emph{defining scheme} of $p$ over $A$.
\item We denote by $\invtypes_x(\monster, A)$ the space of global $A$-invariant types in variables $x$, with $A$ small, and by $\invtypes_x(\monster)$ the union of all $\invtypes_x(\monster, A)$ as $A$ ranges among small subsets of $\monster$.  Denote by  $S(B)$ the union of all spaces of types over $B$ in a finite tuple of variables; similarly for, say,  $\invtypes(\monster)$.
  \end{enumerate}
\end{defin}

If we  say that a type $p$ is invariant, and its domain is not specified and not clear from context, it is usually a safe bet to assume that $p\in S(\monster)$. Similarly if we say that a tuple has invariant type without specifying over which set.

 \subsection{Dense meet-trees}
A  poset $(M, <)$ is a \emph{lower semilinear order} iff every pair of elements from each set of the form $\set{x\in M\mid x< a}$ is comparable. Let $\lmt=\set{<, \meet}$, where $<$ is a binary relation symbol and $\meet$ is a binary function symbol. A \emph{meet-tree} is an $\lmt$-structure $M$ such that  $(M,<)$ is a lower semilinear order where every pair of elements $a,b$ has a greatest common lower bound,  their \emph{meet}  $a\meet b$. If $M$ is a meet-tree and $g\in M$,  classes of the equivalence relation defined on $\set{x\in M\mid x> g}$ by $E(x,y)\coloneqq x\meet y>g$ are called \emph{open cones above $g$}.

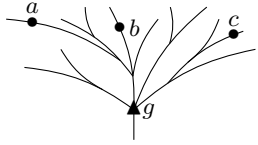
\begin{figure}[h]
  \centering
  \begin{tikzpicture}[scale=0.8, font=\small]
    \node (g) at (0,0) {$\phantom{g}\blacktriangle g$};
    \draw (0,-0.5)--(0,0);
    \draw (0,0) edge[bend left=15] coordinate[pos=0.3] (a) (2,1);
    \draw (a) edge[bend left=15] coordinate  [pos=0.3](b) node [pos=0.9]{$\bullet$} node [pos=0.9,above] {$c$}(1.8,1.3);
    \draw (b) edge[bend right=15]  (1.4,1.5);
    \draw (0,0) edge[bend left=15] coordinate[pos=0.6] (d) (1.2,1.7);
    \draw (d) edge[bend right=15]  (0.6,1.7);
    \draw (0,0) edge[bend right=15] coordinate[pos=0.3] (e) node [pos=0.8]{$\bullet$} node [pos=0.8,right] {$b$}(-0.4,1.7);
    \draw (e) edge [bend right=15] coordinate [pos=0.2] (f) coordinate [pos=0.6] (h) (-1.2, 1.5);
    \draw (e) edge [bend left=15] (0.4, 1.5);
    \draw (f) edge [bend right=15] node [pos=0.8]{$\bullet$} node [pos=0.8,above] {$a$} (-2.1, 1.5);
    \draw (h) edge [bend left=15] (-0.8, 1.7);
    \draw (0,0) edge[bend right=15] coordinate[pos=0.3] (i) (-1.8,0.7);
    \draw (i) edge [bend left=15] (-1.2, 1);
  \end{tikzpicture}
  \caption{the point $a$ is in the same open cone above $g$ as the point $b$, while $c$ is in a different open cone above $g$.}
  \label{figure:cones}
\end{figure}
Finite meet-trees are well-known to form a Fra\"iss\'e class, hence have a Fra\"iss\'e limit, whose theory is complete and eliminates quantifiers.\footnote{For basic Fra\"iss\'e theory, see for instance~\cite[Chapter~7]{hodges}.}
  A \emph{dense meet-tree} is a model of the theory $\tdmt$ of the Fra\"iss\'e limit of finite meet-trees. It is also well-known that this theory can be axiomatised as follows.
\begin{fact}
The theory $\tdmt$ of dense meet-trees is axiomatised by saying that
\begin{enumerate}
\item $(M, <, \meet)$ is a meet-tree;
\item for every $a\in M$, the structure $(\set{x\in M\mid x<a},<)$ is a dense linear order with no endpoints; and
\item for every $a\in M$, there are infinitely many open cones above $a$.
\end{enumerate}
\end{fact}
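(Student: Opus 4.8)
The plan is to prove that the theory $T^{*}$ axiomatised by~(1)--(3) coincides with $\Th(\mathcal M)$, where $\mathcal M$ denotes the Fra\"iss\'e limit of the class $\mathcal K$ of finite meet-trees. First I would note that $T^{*}$ is genuinely first-order: (1) and (2) are each expressible by a single $\lmt$-sentence, while (3) is the axiom scheme containing, for each $n<\omega$, the sentence saying that every point $g$ has at least $n$ pairwise $E$-inequivalent elements above it, where $E_{g}(x,y)$ abbreviates $x,y>g\wedge x\meet y>g$. Since axiom~(3) forces all models of $T^{*}$ to be infinite, by the \L o\'s--Vaught test it suffices to prove that $T^{*}$ is consistent and $\aleph_0$-categorical.

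For consistency I would check that $\mathcal M\models T^{*}$. For~(1): the age of $\mathcal M$ is $\mathcal K$, so every finitely generated substructure of $\mathcal M$ is a finite meet-tree, and this is enough, because lower semilinearity only mentions two elements below a third, and the statement that the function symbol $\meet$ returns the genuine greatest lower bound of $a$ and $b$ can be tested inside the finite substructure generated by $a$, $b$ and any hypothetical strictly larger common lower bound. For~(2) and~(3): given a finite substructure $A$ of $\mathcal M$ and a prescribed new point --- strictly between two comparable elements of $A$, strictly below an element of $A$, or above an element of $A$ in an open cone meeting no other element of $A$ --- one realises the corresponding one-point extension $A'\in\mathcal K$ abstractly and then embeds $A'$ into $\mathcal M$ over $A$ using the extension property of the Fra\"iss\'e limit.

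For $\aleph_0$-categoricity let $N\models T^{*}$ be countable; I would show that $N$ is the Fra\"iss\'e limit of $\mathcal K$, so that $N\cong\mathcal M$ by the uniqueness part of Fra\"iss\'e's theorem~\cite[Chapter~7]{hodges}. By~(1), every finitely generated substructure of $N$ is a finite meet-tree, so the age of $N$ is contained in $\mathcal K$. The decisive point is that $N$ is \emph{rich}: every one-point extension $B'\in\mathcal K$ of a finite substructure $B$ of $N$ embeds into $N$ over $B$. This in turn yields the reverse inclusion (build any finite meet-tree inside $N$ one point at a time), and, being exactly the defining property of the limit, finishes the proof: $T^{*}$ is $\aleph_0$-categorical, hence complete, and since $\mathcal M\models T^{*}$ we conclude $T^{*}=\Th(\mathcal M)=\tdmt$ (quantifier elimination for the latter having already been recorded above).

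Establishing richness is the technical heart of the argument and the step I expect to be the main obstacle. One classifies the one-point extensions $B'=B\cup\{p\}$ of a finite meet-tree $B$ by where $p$ attaches to $B$. Since $B'$ is closed under $\meet$, each $p\meet b$ ($b\in B$) lies in $B\cup\{p\}$, so no genuinely new meet-points appear; the relevant datum is the $<$-largest value $b_{0}$ among the $p\meet b$, together with whether $p$ equals $b_{0}$ (so $p$ is strictly below some element of $B$, possibly below all of it), lies strictly below it, or lies strictly above it --- and in the last case $p$ must sit in an open cone above $b_{0}$ met by no element of $B$, by maximality of $b_{0}$, the node $b_{0}$ possibly being maximal in $B$. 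Each surviving configuration is then produced inside $N$ directly from the axioms: density together with the absence of endpoints below each element (axiom~(2)) handles points falling strictly inside a chain of $B$ or strictly below $B$, while the ``infinitely many cones'' scheme (axiom~(3)) supplies a point above a node of $B$ in a fresh cone; one finally checks that the chosen point has the prescribed meets with the elements of $B$, which is immediate from lower semilinearity. The argument is routine but genuinely case-heavy, which is presumably why the Fact is usually just quoted.
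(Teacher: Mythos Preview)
The paper does not prove this statement: it is recorded as a well-known Fact immediately after the definition of $\tdmt$, with no argument given. Your outline is the standard one and is essentially correct: verify that the Fra\"iss\'e limit satisfies (1)--(3), then show any countable model of (1)--(3) has the one-point extension property over finite substructures, hence is isomorphic to the Fra\"iss\'e limit, and conclude completeness by \L o\'s--Vaught.

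One small slip in your case analysis: since $b_0=\max\{p\meet b:b\in B\}$ and every $p\meet b\le p$, the case ``$p$ lies strictly below $b_0$'' cannot occur. The genuine dichotomy is $b_0=p$ (so $p<b$ for some $b\in B$; here you use axiom~(2), either density if some element of $B$ lies below $p$, or the no-left-endpoint clause otherwise) versus $b_0<p$ with $b_0\in B$ (so $p$ sits in a cone above $b_0$ disjoint from $B$; here axiom~(3) applies). With that correction the argument goes through; the verification that the chosen realisation has the right meets with every element of $B$, not just the ones adjacent to $p$, is the only place requiring a moment's care, and it follows from semilinearity as you indicate.
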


The following remark will be used throughout, sometimes tacitly.

\begin{rem}\label{rem:meetfacts}
The operation $\meet$ is associative, idempotent, and commutative.  Using this and  quantifier elimination, and observing for example that for every $a, b$ the set defined by $x\meet a=b$ is either empty or  infinite, it is easy to see that the definable closure $\dcl(A)$ of a set $A$ coincides with its closure under meets.  In particular, if $A$ is finite, then so is $\dcl(A)$: by the properties of $\meet$ we just pointed out, its size cannot exceed that of the powerset of $A$.\footnote{While sufficient for our purposes, this upper bound is very far from optimal: one can show that $\abs{\dcl(A)}\le 2\abs A$. See~\cite[Remark~4.6]{estevankaplan}.}
\end{rem}
When working in expansions of $\tdmt$, we will denote the closure of a set $A$ under meets by $\dcl^{\lmt}(A)$. This is justified by the previous remark.
 \begin{defin}
Define the \emph{cut}  $C_p$ of a type $p(x)\in S_1(M)$ to be $\set{c\in M\mid p\proves x\ge c}$ and the cut in $M$ of an element  $b\in N \succ M$ in some elementary extension of $M$ to be $C_b^M\coloneqq C_{\tp(b/M)}$. We say that $C_p$  is \emph{bounded} iff it is bounded from above in $M$.
\end{defin}
 This usage of the word ``cut'' is a bit more general than the one traditionally used for linear orders: our cuts have no upper part, only a lower one.

 It can be shown by standard techniques that $\tdmt$ is $\mathsf{NIP}$,  in fact dp-minimal. This makes it amenable to an analysis of invariant types using indiscernible sequences, and  it turns out that invariant $1$-types are necessarily of one of the six kinds below, as shown by using \emph{eventual types}  (see~\cite[Subsection~2.2.3]{simon}). We refer the reader to~\cite{simondmt} and~\cite[Subsection~2.3.1]{simon}. Alternatively, it is possible to prove this directly via quantifier elimination by considering, for a fixed $p(x)\in \invtypes_1(\monster)$,  what are the possible  values of each $d_p\phi$, as $\phi(x;y)$ ranges among $L$-formulas.

\begin{defin}
Let $\monster\models \tdmt$ and $p(x)\in S_1(\monster)$. We say that $p$ is of kind
  \begin{itemize}
  \item [(0)] iff $p$ is realised in $\monster$;
  \item [(Ia)] iff there is a small linearly ordered set $A\smallsubset \monster$ such that $p(x)\proves\set{x<a\mid a\in A}\cup\set{x> b\mid b\in \monster, b< A}$;
  \item [(Ib)] iff there is a small linearly ordered set $A\smallsubset \monster$ with no maximum such that $p(x)\proves\set{x>a\mid a\in A}\cup\set{x< b\mid b\in \monster, b> A}$, or  there are $a$ and $c$ in $\monster$ such that $p(x)\proves \set{a<x<c}\cup \set{x<b\mid b\in \monster, a<b<c}$;
  \item [(II)] iff there is $g$ such that $p(x)\proves \set{x>g}\cup\set{x\meet b=g\mid b\in \monster, b>g}$;
  \item [(IIIa)] iff $p(x)\proves\set{x\centernot \le b\mid b\in \monster}$ and there is $c\in \monster$ such that $\tp(x\meet c/\monster)$ is of kind (Ia);
  \item [(IIIb)] iff $p(x)\proves\set{x\centernot \le b\mid b\in \monster}$ and there is $c\in \monster$ such that $\tp(x\meet c/\monster)$  is of kind (Ib).
  \end{itemize}
\end{defin}
So types of kind (0), (Ia), or (Ib) correspond to cuts in a linearly ordered  subset of the tree, where in kind (Ib), if the cut of $p$ has a maximum $a$, we are  specifying an existing open cone above $a$.   Kinds (II), (IIIa), and (IIIb) are the corresponding ``branching'' versions. Kind (II) is the type of an element in a new open cone above an existing point. See Figure~\ref{figure:dmtkindsoftypes}.

\begin{figure}[b]
  \centering
  \begin{tikzpicture}[scale=0.8, font=\small]
    \node (g) at (0,0) {$\blacktriangle$};
    \draw[dotted] (0,-1.5) edge node [pos=0.6](x) {$\bullet$} node [pos=0.6, right]{$x$ (Ib)} (0,-0.5);    
    \draw (0,-0.5)--(0,0);
    \draw [dotted] (0,-1.2) edge [bend right=15] node [pos=0.7] {$\bullet$} node [pos=0.7, above]{\phantom{ (IIIb)}$z$ (IIIb)} (-2.7, -0.1);
    \draw [dotted] (0,0) edge[bend left=15] node [pos=0.7] {$\bullet$} node [pos=0.7, above]{\phantom{ (II)}$y$ (II)} (3,0.4);
    \draw (0,-0.3) edge[bend left=15]  (2,0.25);
    \draw (0,0) edge[bend left=15] coordinate[pos=0.3] (a) (2,1);
    \draw (a) edge[bend left=15] coordinate  [pos=0.3](b) (1.7,1.2);
    \draw (b) edge[bend right=15]  (1.4,1.5);
    \draw (0,0) edge[bend left=15] coordinate[pos=0.6] (d) (1.2,1.7);
    \draw (d) edge[bend right=15]  (0.6,1.7);
    \draw (0,0) edge[bend right=15] coordinate[pos=0.3] (e) (-0.4,1.7);
    \draw (e) edge [bend right=15] coordinate [pos=0.2] (f) coordinate [pos=0.6] (h) (-1.2, 1.5);
    \draw (e) edge [bend left=15] (0.4, 1.5);
    \draw (f) edge [bend right=15] (-2.1, 1.5);
    \draw (h) edge [bend left=15] (-0.8, 1.7);
    \draw (0,0) edge[bend right=15] coordinate[pos=0.3] (i) (-1.8,0.7);
    \draw (i) edge [bend left=15] (-1.2, 1);
    \draw (0, -3) -- (0,-1.5);
    \draw (0,-2.7) edge [bend left=15] coordinate [midway] (j) (1.5,-1.7);
    \draw (j) edge [bend right=15] (1.2, -1.2);
    \draw (0, -2.5) edge [bend right=15] (-1.2, -1);
    \foreach \Point in {0,...,11}{\node at (0,-1.5-(\Point*\Point*0.011) {$\blacktriangle$};}
  \end{tikzpicture}
  \caption{some nonrealised $B$-invariant types, where points of $B$ are denoted by triangles. In this picture, the set of triangles below $x$ has no maximum, solid lines lie in $\monster$, and dotted lines lie in a bigger $\monster_1\satext \monster$.  The type of $x$ is of kind (Ib), that of $y$  of kind (II), and that of $z$ of kind (IIIb).}
  \label{figure:dmtkindsoftypes}
\end{figure}
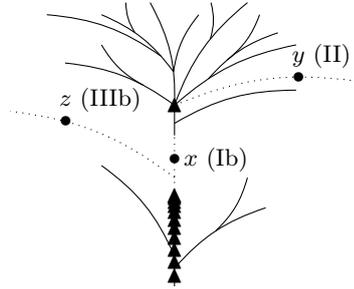

We conclude this section by recording some easy observations for later use.
\begin{lemma}\label{lemma:cutremarks3}\*
  \begin{enumerate}
  \item Let $b_0, b_1\in N\succ M$.  If $C_{b_0}^M\subseteq C_{b_{1}}^M$ then $C_{b_0\meet b_1}^M=C_{b_0}^M$. If none of  $C_{b_0}^M$ and $C_{b_1}^M$ is included in the other, then $b_0\meet b_1\in M$. 
  \item For all $\bla b0,n\in N\succ M$, points of $\dcl(M \bla b0,n)$ are either in $M$ or have the same cut in $M$ as one of the $b_i$.
  \item If $p\in \invtypes_1(\monster)$ then $C_p$ is bounded.
  \end{enumerate}
\end{lemma}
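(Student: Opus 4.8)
I would prove the three parts in order; the first two are short computations inside meet-trees and the third is a check over the six kinds of invariant $1$-type listed above. For part~1, unwind $C_b^M=\set{c\in M\mid c\le b}$ and use that $M$ is closed under $\meet$. If $C_{b_0}^M\subseteq C_{b_1}^M$, then $C_{b_0\meet b_1}^M\subseteq C_{b_0}^M$ is clear since $b_0\meet b_1\le b_0$, while conversely each $c\in C_{b_0}^M$ is also $\le b_1$, hence a common lower bound of $b_0$ and $b_1$, hence $\le b_0\meet b_1$. If neither cut contains the other, pick $c_0\in C_{b_0}^M\setminus C_{b_1}^M$ and $c_1\in C_{b_1}^M\setminus C_{b_0}^M$; comparing $b_0\meet b_1$ with $c_0$ below $b_0$ and with $c_1$ below $b_1$ forces $b_0\meet b_1<c_i$ for $i=0,1$, so $b_0\meet b_1\le c_0\meet c_1$, and since $c_0\meet c_1$ is itself a common lower bound of $b_0,b_1$ the two agree, giving $b_0\meet b_1=c_0\meet c_1\in M$.

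For part~2, by Remark~\ref{rem:meetfacts} a point of $\dcl(M\bla b0,n)$ is a finite meet of elements of $M\cup\set{\bla b0,n}$; after collecting the $M$-part into a single element I would induct on the number of distinct $b_i$'s appearing. The case of none is trivial, and of a single $b_i$ alone is immediate; otherwise the point is $d'\meet b_i$ with $d'$ using strictly fewer of the $b_j$, so by induction $d'\in M$ or $C_{d'}^M=C_{b_j}^M$ for some $j$. Now apply part~1 to $d'$ and $b_i$: in its three cases one lands either in $M$ (also using that $d'\in C_{d'}^M$ forces $d'\le b_i$, whence $d'\meet b_i=d'$, when $d'\in M$ and $C_{d'}^M\subseteq C_{b_i}^M$), or on a cut equal to $C_{b_i}^M$ or to $C_{b_j}^M$.

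For part~3, I would argue by the classification. For kinds~(0), (Ia), (II) an explicit upper bound of $C_p$ in $\monster$ is visible from the description — a realisation of $p$, an element of the relevant $A$, the point $g$; for kind~(Ib) it is the endpoint $c$ in the second form, and in the first form any element of $\monster$ above the small linearly ordered set $A$, which exists because $\set{x>a\mid a\in A}$ is finitely satisfiable in $\monster$, hence realised there by saturation. For kinds~(IIIa), (IIIb), fix $c\in\monster$ with $\tp(x\meet c/\monster)$ of kind~(Ia) or~(Ib) and let $\beta$ realise $p$ in an elementary extension of $\monster$; the claim is $C_p=C_{\beta\meet c}^{M}$, which then reduces to the cases already treated. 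Here ``$\supseteq$'' is clear from $\beta\meet c\le\beta$, and for ``$\subseteq$'' one notes that a $d\in\monster$ with $d\le\beta$ but $d\not\le\beta\meet c$ would be comparable with $\beta\meet c$ (both lying below $\beta$), forcing $\beta\meet c<d$ and hence $\beta\meet c\le d\meet c\le\beta\meet c$, so that $\beta\meet c=d\meet c\in\monster$ — impossible, since types of kind~(Ia) or~(Ib) are not realised in $\monster$. I expect part~3 to be the only genuine obstacle: it rests essentially on the classification of invariant $1$-types, and the reduction of the branching kinds~(IIIa), (IIIb) to the linear ones requires exactly the meet identity just described.
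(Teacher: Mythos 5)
Your proof is correct and takes essentially the same route as the paper, whose own proof of this lemma is a one-liner asserting that part~1 is clear from the definitions of cut and meet, part~2 follows by induction, and part~3 follows from the characterisation of invariant $1$-types. You simply supply the omitted details (in particular the reduction of kinds (IIIa)/(IIIb) to (Ia)/(Ib) via the identity $\beta\meet c=d\meet c$), and these check out.
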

\begin{proof}
The first  part is  clear from the definitions of cut and meet, and the second one follows by  induction. The last part follows from the characterisation of invariant $1$-types. 
\end{proof}

\section{Weak binarity}\label{sec:wbin}
The main result of this section, Theorem~\ref{thm:expdmt}, states that certain expansions of $\tdmt$ are \emph{weakly binary}.  
 It applies for instance to the theory $\mathsf{DTR}$ from~\cite{estevankaplan}, obtained by equipping every set of open cones above a point with a structure elementarily equivalent to the  Random Graph.

Recall that a theory $T$ is  \emph{binary}  iff every formula is equivalent modulo $T$ to a Boolean combination of formulas with at most two free variables. Equivalently, for every set of parameters $B$ and tuples $a,b$,
  \[
    \tp(a/B)\cup \tp(b/B)\cup \tp(ab/\emptyset)\proves \tp(ab/B)
  \]
 Natural examples of such theories are those which eliminate quantifiers in a binary relational language. On the other hand, binary \emph{function} symbols are usually  an obstruction to binarity, as they can be used to write atomic formulas with an arbitrary number of free variables.

This is for instance the case for $\tdmt$, whose language contains the binary function symbol $\meet$: it is easy to see that $\tdmt$  is not binary, nor is any of its expansions by constants.  Even though $\tdmt$ is known to be \emph{ternary} (see~\cite[Corollary~4.6]{simondmt}), this is not sufficient for our purposes: the theory from~\cite[Proposition~2.3]{invbartheory} where $\invtilde$ is not well-defined is ternary as well.

  \begin{defin}\label{defin:wb}
A theory is \emph{weakly binary} iff, whenever $a$, $b$ are tuples such that   $\tp(a/\monster)$ and $\tp(b/\monster)$ are invariant, there is $A\smallsubset \monster$ such that
    \begin{equation}\tag{$\dagger$}\label{eq:wb}
    \tp(a/\monster)\cup \tp(b/\monster)\cup \tp(a,b/A)\proves \tp(a,b/\monster)
  \end{equation}
\end{defin}
Weak binarity was introduced in~\cite{invbartheory} as a sufficient condition for well-definedness of the domination monoid. The class of weakly binary theories clearly contains any theory which happens to have an expansion by constants which is binary. Examples with no binary expansion by constants include the theory of a generic equivalence relation where every equivalence class carries a circular order and, as we will shortly see, $\tdmt$. 

It follows immediately from the definitions that,  in every theory, if $p\in S(\monster)$ is invariant then each of its $1$-subtypes, that is, each of the restrictions of $p$ to one of its variables, is invariant as well.
  It is easily seen, say by using~\cite[Lemma~1.27]{invbartheory} and induction, that if $T$ is weakly binary then the converse holds as well. We record this here for later reference.
\begin{rem}\label{rem:invariantdim1}
Let $T$ be weakly binary and $p\in S(\monster)$. Then $p$ is invariant if and only if every $1$-subtype of $p$ is invariant.
\end{rem}
Before returning to trees, note that this converse is in general false.  For example, in  the theory of Divisible Ordered Abelian Groups, let $p(x_0,x_1)$ be a $2$-type prescribing $x_0,x_1$ to be larger than $\monster$, and such that  both the cofinality of $\set{d\in \monster\mid p\proves x_0-x_1>d}$ and the coinitiality of its complement are not small. Then $p$ is not invariant, even if both of its $1$-subtypes are.
\begin{notation}
 We write $x\parallel y$ to mean that $x\centernot \le y$ and $y\centernot \le x$.
\end{notation}
\begin{lemma}\label{lemma:meetclosedmodulofinite3}
In the theory $\tdmt$, let $b$ be a finite tuple. There is a finite tuple $d$  such that  $\monster bd$ is closed under meets. Moreover, $d$ can be chosen such that additionally, if we let $c\coloneqq\monster\cap d$, then $d\in \dcl(bc)$, and for every $e\in bd\setminus \monster$ such that $C^\monster_e$ is bounded, the following happens.
\begin{enumerate}
\item\label{point:Eae} There is $a_e\in c$ such that $a_e> C^\monster_e$.
\item\label{point:aesamecone} If $C^\monster_e$ has a maximum $g$ and $e$ is in an existing open cone above $g$, then this is the open cone of $a_e$.
\end{enumerate}
\end{lemma}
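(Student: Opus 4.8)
The plan is to pass to the meet-closure $D\coloneqq\dcl^{\lmt}(\monster b)$, show that it contains only finitely many points outside $\monster$, and then take $d$ to be those new points together with finitely many carefully chosen ``attachment points'' of $\monster$. Note that no invariance hypothesis on $\tp(b/\monster)$ is needed or used.

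First I would observe, using associativity, commutativity and idempotency of $\meet$ and the fact that $\monster$ is already meet-closed, that every element of $D$ has the form $b_I\meet m$, where $I\subseteq\set{0,\dots,\abs b-1}$, $b_I\coloneqq\bigwedge_{i\in I}b_i$, and either $m\in\monster$ or the factor $m$ is omitted (and when $I=\emptyset$ the element lies in $\monster$). So it suffices to bound, for each $I$ with $b_I\notin\monster$, the number of values of $b_I\meet m$ as $m$ ranges over $\monster$. The crucial point is that \emph{at most one such value is strictly below $b_I$}: any two of them are $\le b_I$, hence comparable, and if $b_I\meet m_0<b_I\meet m_1$ then a short computation with $\meet$ gives $m_0\meet m_1=b_I\meet m_0$, which is absurd, since the left-hand side is in $\monster$ whereas the right-hand side — a non-$\monster$ point below $b_I$, hence a strict upper bound of $C^\monster_{b_I}$ — is not (Lemma~\ref{lemma:cutremarks3}(1) here tells us this meet has cut exactly $C^\monster_{b_I}$). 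Consequently $\abs{D\setminus\monster}\le 2^{\abs b+1}$, so $D\setminus\monster$ is finite, and by the description above each of its elements lies in $\dcl^{\lmt}(bc_0)$ for some finite $c_0\subseteq\monster$.

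Given this, I would construct $d$ as follows. For each $I$ with $b_I\notin\monster$: if $b_I\meet m\notin\monster\cup\set{b_I}$ for some $m\in\monster$, fix such a witness $n_I$ (so that $p_I\coloneqq b_I\meet n_I$ is the unique new point below $b_I$); otherwise, if $C^\monster_{b_I}$ is bounded, fix some $a_I\in\monster$ with $a_I>C^\monster_{b_I}$, taking $a_I$ inside the open cone of $b_I$ above $g$ in the case where $C^\monster_{b_I}$ has a maximum $g$ and $b_I$ lies in an existing such cone — this is possible because, when there is no $n_I$, every $\monster$-point of that cone must be $\ge b_I$. Let $c$ collect all chosen $n_I$'s and $a_I$'s, and put $d\coloneqq c\cup(D\setminus\monster)$. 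Then $\monster bd=D$ is closed under meets; $\monster\cap d=c$, since the remaining elements of $d$ were chosen outside $\monster$; and $d\in\dcl^{\lmt}(bc)=\dcl(bc)$ by Remark~\ref{rem:meetfacts}. Finally $bd\setminus\monster=D\setminus\monster$ consists of the non-$\monster$ $b_I$'s and the $p_I$'s, and for each such $e$ with $C^\monster_e$ bounded one takes $a_e$ to be $n_I$ when defined and $a_I$ otherwise; then \ref{point:Eae} and \ref{point:aesamecone} follow at once from $b_I\meet n_I=p_I>\max C^\monster_{b_I}$ and from the way $a_I$ was selected.

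The main obstacle is the finiteness statement, and specifically the ``at most one new meet below $b_I$'' observation together with the attendant juggling of meets and cuts via Lemma~\ref{lemma:cutremarks3}; once that is in place, the construction of $d$ and the verification of \ref{point:Eae}–\ref{point:aesamecone} are routine. A secondary nuisance is the book-keeping required for \ref{point:aesamecone} when no witness $n_I$ exists, which is precisely why $a_I$ must be chosen deliberately inside the relevant cone.
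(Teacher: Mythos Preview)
Your argument is correct and takes a genuinely different route from the paper's. The paper chooses, for each coordinate $b_i$, a point $a_i\in\monster$ above $C^\monster_{b_i}$ (in the appropriate cone when relevant), sets $d$ to enumerate $\dcl(ba)\setminus b$, and then must \emph{verify} that $\monster bd$ is closed under meets; this verification is a nontrivial case analysis showing that $f\meet e\le f\meet a_e$ for $f\in\monster$ and $e\in bd\setminus\monster$. You instead take $d$ to contain all of $D\setminus\monster$ for $D=\dcl^{\lmt}(\monster b)$, making closure automatic, and shift the work to proving that $D\setminus\monster$ is finite via the pleasant observation that each $b_I$ has at most one ``new'' meet with $\monster$ strictly below it.

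What each buys: the paper's approach leans on the already-recorded fact that $\dcl$ of a finite set is finite (Remark~\ref{rem:meetfacts}), so finiteness of $d$ is free, at the cost of the closure verification. Your approach makes closure free, at the cost of the ``at most one'' lemma --- which, however, is short and yields an explicit bound $\abs{D\setminus\monster}\le 2^{\abs b+1}$. Two small expository points: when you write ``it suffices to bound the number of values of $b_I\meet m$'' you of course mean the non-$\monster$ values (there are many $\monster$-values); and the remark that in Case~2 every $\monster$-point of the cone is $\ge b_I$ is true but not needed --- any $\monster$-point of the cone already satisfies $a_I>g\ge C^\monster_{b_I}$ and lies in the right cone.
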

\begin{proof}
Define a tuple $a$ as follows.  If $C_{b_i}^\monster$ is not bounded, choose $a_i$ to be an arbitrary point of $\monster$ (or, if the reader prefers, leave $a_i$ undefined). If $C_{b_i}^{\monster}$ has a maximum $g$ and $b_i$ is in an open cone above $g$ which intersects $\monster$, let $a_i\in \monster$ be such that $a_i\meet b_i>g$ (see first half of Figure~\ref{figure:choiceofa}); otherwise (second half of the same figure),  choose any $a_i> C_{b_i}^{\monster}$. Note that the closure $\dcl(ba)$ of $ba$ under meets is finite by Remark~\ref{rem:meetfacts}, and  let  $d$ be  a tuple enumerating  $\dcl(ba)\setminus b$. Recall that we defined $c\coloneqq\monster\cap d$, and note that, by construction, $d\in \dcl(bc)$.
\begin{figure}[b]
    \centering
    \begin{minipage}{0.25\linewidth}
      \centering
      \begin{tikzpicture}[scale=0.8, font=\small]
        \node (g) at (0,-1.5) {$\phantom{g}\blacktriangle g$};
        \draw[dotted] (0,-1.5) --(0,-0.5);
                    \draw[dotted] (0,-1.5) edge node [pos=0.6] {$\bullet$} node [pos=0.6, left]{$b_i$} (0,-0.5);
    \draw (0,-0.5)edge node [midway] {$\bullet$} node [midway, right]{$a_i$} (0,0);
    \draw (0,-1.5) edge [bend left=15] (1.5, -0.6);
    \draw (0,0) edge[bend left=15] coordinate[pos=0.3] (a)  (2,1);
    \draw (a) edge[bend left=15] coordinate  [pos=0.3](b) (1.7,1.2);
    \draw (b) edge[bend right=15]  (1.4,1.5);
    \draw (0,0) edge[bend left=15] coordinate[pos=0.6] (d) (1.2,1.7);
    \draw (d) edge[bend right=15]  (0.6,1.7);
    \draw (0,0) edge[bend right=15] coordinate[pos=0.3] (e) (-0.4,1.7);
    \draw (e) edge [bend right=15] coordinate [pos=0.2] (f) coordinate [pos=0.6] (h) (-1.2, 1.5);
    \draw (e) edge [bend left=15] (0.4, 1.5);
    \draw (f) edge [bend right=15] (-2.1, 1.5);
    \draw (h) edge [bend left=15] (-0.8, 1.7);
    \draw (0,0) edge[bend right=15] coordinate[pos=0.3] (i) (-1.8,0.7);
    \draw (i) edge [bend left=15] (-1.2, 1);
    \draw (0, -3) -- (0,-1.5);
    \draw (0,-2.7) edge [bend left=15] coordinate [midway] (j) (1.5,-1.7);
    \draw (j) edge [bend right=15] (1.2, -0.9);
    \draw (0, -2.5) edge [bend right=15] (-1.2, -1);
  \end{tikzpicture}
\end{minipage}
\begin{minipage}{0.25\linewidth}
  \centering
      \begin{tikzpicture}[scale=0.8, font=\small]
        \node (g) at (0,-1.5) {$\phantom{g}\blacktriangle g$};
        \draw[dotted] (0,-1.5) --(0,-0.5);
            \draw [dotted] (0,-1) edge [bend right=15]  node [pos=0.8] {$\bullet$} node [pos=0.8, above]{$b_i$} (-1.5, 0);
    \draw (0,-0.5)edge node [midway] {$\bullet$} node [midway, right]{$a_i$} (0,0);
    \draw (0,-1.5) edge [bend left=15] (1.5, -0.6);
    \draw (0,0) edge[bend left=15] coordinate[pos=0.3] (a)  (2,1);
    \draw (a) edge[bend left=15] coordinate  [pos=0.3](b) (1.7,1.2);
    \draw (b) edge[bend right=15]  (1.4,1.5);
    \draw (0,0) edge[bend left=15] coordinate[pos=0.6] (d) (1.2,1.7);
    \draw (d) edge[bend right=15]  (0.6,1.7);
    \draw (0,0) edge[bend right=15] coordinate[pos=0.3] (e) (-0.4,1.7);
    \draw (e) edge [bend right=15] coordinate [pos=0.2] (f) coordinate [pos=0.6] (h) (-1.2, 1.5);
    \draw (e) edge [bend left=15] (0.4, 1.5);
    \draw (f) edge [bend right=15] (-2.1, 1.5);
    \draw (h) edge [bend left=15] (-0.8, 1.7);
    \draw (0,0) edge[bend right=15] coordinate[pos=0.3] (i) (-1.8,0.7);
    \draw (i) edge [bend left=15] (-1.2, 1);
    \draw (0, -3) -- (0,-1.5);
    \draw (0,-2.7) edge [bend left=15] coordinate [midway] (j) (1.5,-1.7);
    \draw (j) edge [bend right=15] (1.2, -0.9);
    \draw (0, -2.5) edge [bend right=15] (-1.2, -1);
  \end{tikzpicture}
\end{minipage}
\begin{minipage}{0.25\linewidth}
  \centering
  \begin{tikzpicture}[scale=0.8, font=\small]
    \node (g) at (0,-1.5) {$\phantom{g}\blacktriangle g$};
    \draw (0,-0.5)-- (0,0);
            \draw[dotted] (0,-1.5) edge  (0,-0.5);
    \draw (0,0) edge[bend left=15] coordinate[pos=0.3] (a)  (2,1);
    \draw (a) edge[bend left=15] coordinate  [pos=0.3](b) (1.7,1.2);
    \draw (b) edge[bend right=15]  (1.4,1.5);
    \draw (0,0) edge[bend left=15] coordinate[pos=0.6] (d) (1.2,1.7);
    \draw (d) edge[bend right=15]  (0.6,1.7);
    \draw (0,0) edge[bend right=15] coordinate[pos=0.3] (e) (-0.4,1.7);
    \draw (e) edge [bend right=15] coordinate [pos=0.2] (f) coordinate [pos=0.6] (h) (-1.2, 1.5);
    \draw (e) edge [bend left=15] (0.4, 1.5);
    \draw (f) edge [bend right=15] node [pos=0.8] {$\bullet$} node [pos=0.8, below]{$a_i$} (-2.1, 1.5);
    \draw (h) edge [bend left=15] (-0.8, 1.7);
    \draw (0,0) edge[bend right=15] coordinate[pos=0.3] (i) (-1.8,0.7);
    \draw (i) edge [bend left=15] (-1.2, 1);
    \draw (0, -3) -- (0,-1.5);
    \draw (0,-2.7) edge [bend left=15] coordinate [midway] (j) (1.5,-1.7);
    \draw (j) edge [bend right=15] (1.2, -0.9);
    \draw (0, -2.5) edge [bend right=15] (-1.2, -1);
    \draw (0,-1.5) edge [dotted, bend right=15] node [pos=0.6] {$\bullet$} node [pos=0.6, above]{$b_i$} (-1.5, -0.6);
    \draw (0,-1.5) edge [bend left=15] (1.5, -0.6);
  \end{tikzpicture}
\end{minipage}
      \begin{minipage}{0.25\linewidth}
        \centering
      \begin{tikzpicture}[scale=0.8, font=\small]
    \draw[dotted] (0,-1.5) --(0,-0.5);    
    \draw (0,-0.5)-- (0,0);
            \draw[dotted] (0,-1.5) --(0,-0.5);
            \draw [dotted] (0,-1) edge [bend right=15]  node [pos=0.8] {$\bullet$} node [pos=0.8, above]{$b_i$} (-1.5, 0);
    \draw (0,0) edge[bend left=15] coordinate[pos=0.3] (a)  (2,1);
    \draw (a) edge[bend left=15] coordinate  [pos=0.3](b) (1.7,1.2);
    \draw (b) edge[bend right=15]  (1.4,1.5);
    \draw (0,0) edge[bend left=15] coordinate[pos=0.6] (d) (1.2,1.7);
    \draw (d) edge[bend right=15]  (0.6,1.7);
    \draw (0,0) edge[bend right=15] coordinate[pos=0.3] (e) (-0.4,1.7);
    \draw (e) edge [bend right=15] coordinate [pos=0.2] (f) coordinate [pos=0.6] (h) (-1.2, 1.5);
    \draw (e) edge [bend left=15] (0.4, 1.5);
    \draw (f) edge [bend right=15] node [pos=0.8] {$\bullet$} node [pos=0.8, below]{$a_i$} (-2.1, 1.5);
    \draw (h) edge [bend left=15] (-0.8, 1.7);
    \draw (0,0) edge[bend right=15] coordinate[pos=0.3] (i) (-1.8,0.7);
    \draw (i) edge [bend left=15] (-1.2, 1);
    \draw (0, -3) -- (0,-1.5);
    \draw (0,-2.7) edge [bend left=15] coordinate [midway] (j) (1.5,-1.7);
    \draw (j) edge [bend right=15] (1.2, -0.9);
    \draw (0, -2.5) edge [bend right=15]  (-1.2, -1);
  \end{tikzpicture}
\end{minipage}
    \caption{how to choose $a_i$ in the proof of Lemma~\ref{lemma:meetclosedmodulofinite3}. In the first three pictures, $C^\monster_{b_i}$ has a maximum, $g$, denoted by a triangle. In the last picture it does not have one. Solid lines lie in $\monster$, and dotted lines lie in a bigger $\monster_1\satext \monster$.}
  \label{figure:choiceofa}
\end{figure}
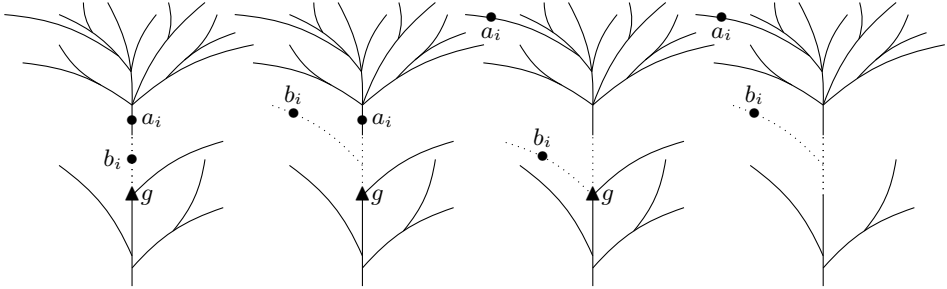

  We now prove the ``moreover'' part, and then show how closure under meets of $\monster bd$ follows. Let $e\in bd\setminus \monster$ have bounded cut. By Lemma~\ref{lemma:cutremarks3}, construction, and the fact that $e\notin \monster$, there is $i<\abs b$ such that $e$ can be written as the meet of $b_i$ with other points (possibly none), either with the same cut as $b_i$, or in $\monster$ and above $C^\monster_{b_i}$ . In particular $e\le b_i$ and  $C^\monster_e=C^\monster_{b_i}$.

\bigcircled{\ref{point:Eae}}    Let $i$ be as above. Since $C^\monster_e=C^\monster_{b_i}$, we have    $a_e\coloneqq a_i> C^\monster_e$.

\bigcircled{\ref{point:aesamecone}} Let $i$ and $a_e$ be as above. By choice of $a_i=a_e$, we have $a_i\meet b_i> g$. By construction and the fact that $e\notin \monster$, we have  $g<e\le b_i$, so $e\meet b_i=e>g$ and $e$ and $b_i$ are in the same open cone above $g$, which is that of $a_i$. This completes the proof of~\ref{point:aesamecone}, hence of the ``moreover'' part.

We are left to prove that $\monster bd$ is closed under meets. As both $\monster$ and $bd$ are, and $\meet$ is commutative, all we need to show is that if  $e\in bd\setminus \monster$  and $f\in \monster$  then $f\meet e\in \monster bd$. If $e$ and $f$ and comparable there is nothing to prove, so assume they are not, i.e.~that $e\parallel f$. It is immediate to notice that if $C_e^\monster$ is unbounded, since $f\in \monster$ and $e\parallel f$, none of  $C_e^\monster$ and $C_f^\monster$ is included in the other. Hence, by the first point of Lemma~\ref{lemma:cutremarks3}, we have $e\meet f\in \monster$. Assume now that $C_e^\monster$ is bounded.
  \begin{claim}
To conclude, it is enough to show that $f\meet e\le f\meet a_e$.
\end{claim}
\begin{claimproof}
 By assumption, commutativity, and idempotency of $\meet$ we have  $f\meet e=(f\meet e)\meet (f\meet a_e)=(f\meet a_e)\meet (a_e\meet e)$.   Since $f\meet a_e$ and $a_e\meet e$ are both predecessors of $a_e$ they are comparable, so their meet is one of them. But $a_e\meet e\in bd$ and $f\meet a_e\in \monster$, so $f\meet e\in \monster bd$.
\end{claimproof}
We prove that $f\meet e\le f\meet a_e$ by cases. Note that, since $f\meet a_e$ and $f\meet e$ are both predecessors of $f$, they are comparable.
  \begin{enumerate}
  \item If $f> C^\monster_e$  then  $C^\monster_{f\meet e}=C^\monster_e$. Suppose additionally that $f\meet a_e>C^\monster_e=C^{\monster}_{f\meet e}$.  Since $f\meet a_e\in \monster$, having $f\meet a_e\le f\meet e$ would contradict $f\meet a_e> C^\monster_{f\meet e}$, and therefore $f\meet e<f\meet a_e$.
  \item If $f> C^\monster_e$  and we are not in the previous case, then $C_e$ has a maximum $g$ and $f\meet a_e=g$, i.e.~$f$ and $a_e$ are in different open cones above $g$. Now,  $e$ can  be either in the same open cone as $a_e$, or in a new one, but in both cases $f\meet e=g=f\meet a_e$.
  \item If $f\centernot > C^\monster_e$ then there is $h\in C^\monster_e$ such that $f\centernot > h$, and then $f\meet h=f\meet (h\meet e)=f\meet e$. As $a_e> C^\monster_e$ in particular $a_e> h$, hence by definition of meet we must have $f\meet a_e=f\meet h=f\meet e$.\qedhere
  \end{enumerate}
\end{proof}
\begin{defin}\label{defin:binconexp}
  A \emph{binary cone-expansion} of $\tdmt$ is a theory $T$ in a language $L=\lmt\cup \set{R_j, P_{j'}\mid j\in J, j'\in J'}$ satisfying the following properties.
  \begin{enumerate}
  \item  Every $P_{j'}$ is a unary relation symbol; every $R_j$ is a binary relation symbol.
  \item $T$ is a completion of $\tdmt$ and eliminates quantifiers in $L$.
  \item Every $R_j$ is \emph{on open cones}, in the sense that
    \begin{enumerate}
    \item $R_j(x,y)\implica x\parallel y$, and 
    \item if $x\parallel y$ and
      $x',y'$ are such that $x\meet x'>x\meet y$ and
      $y\meet y'> x\meet y$ then $R_j(x,y)\coimplica R_j(x',y')$.
    \end{enumerate}
  \end{enumerate}
  If additionally $J'=\emptyset$ we say that  $T$ is  a \emph{purely binary cone-expansion} of $\tdmt$.
\end{defin}
\begin{eg}
A purely binary cone-expansion of $\tdmt$ is  $\mathsf{DTR}$,  axiomatised by taking $J=\set R$, $J'=\emptyset$, and saying that, for all $g$, the structure induced by $R$  on the (imaginary sort of) open cones above $g$ is  elementarily equivalent to the Random~Graph. See~\cite{estevankaplan} for  $\mathsf{DTR}$, and for a more general analysis of theories of trees with relations on sets of open cones.
\end{eg}
\begin{eg}
  Another theory examined in~\cite{estevankaplan}, called $\mathsf{DTE}_2$,  is defined similarly to $\mathsf{DTR}$, but instead of the  Random Graph it uses the Fra\"iss\'e limit of all finite structures with two equivalence relations. More generally, one can define $\mathsf{DTE}_n$ in an analogous fashion. The results of this paper apply to these theories as well even if, strictly speaking, they do not satisfy Definition~\ref{defin:binconexp}, since the latter requires the $R_j$ to be irreflexive.  This can easily be circumvented by observing that, if $E$ is an equivalence relation and $\Delta$ is the diagonal, then $E$ and $E\setminus\Delta$ are interdefinable.
\end{eg}

\begin{thm}\label{thm:expdmt}
  Every binary cone-expansion of $\tdmt$ 
  is weakly binary.
\end{thm}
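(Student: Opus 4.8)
The plan is to eliminate quantifiers, replace $ab$ by a finite meet-closed enlargement supplied by Lemma~\ref{lemma:meetclosedmodulofinite3}, observe that atomic facts internal to that enlargement are automatically decided by $\tp(ab/A)$, and then split the remaining work into recovering the order--meet diagram over $\monster$ (the ``$\lmt$-part'', which is really weak binarity of $\tdmt$ itself) and recovering the relations $R_j,P_{j'}$, the latter being handled through the ``on open cones'' condition of Definition~\ref{defin:binconexp}.

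First I would fix finite tuples $a,b$ with $\tp(a/\monster),\tp(b/\monster)$ invariant (the infinite case reduces to this one in the standard way, e.g.\ via~\cite[Lemma~1.27]{invbartheory}). Every $1$-subtype of an invariant type is invariant, so by Lemma~\ref{lemma:cutremarks3} all cuts $C^\monster_{(ab)_i}$, hence the cut of every element of $\dcl^{\lmt}(\monster ab)\setminus\monster$, are bounded. Applying Lemma~\ref{lemma:meetclosedmodulofinite3} to the concatenation $ab$ (working in the reduct to $\lmt$) yields a finite $d$ with $\monster\cup abd$ meet-closed, $d\in\dcl^{\lmt}(ab\,c)$ for $c:=\monster\cap d$, and a point $a_e\in c$ with $a_e>C^\monster_e$ for every $e\in abd\setminus\monster$, lying in $e$'s open cone whenever $C^\monster_e$ has a maximum and $e$ lies in an existing cone above it. I would take $A$ to be $c$ together with the finitely many existing values $\max C^\monster_e$ for $e\in\dcl^{\lmt}(a)\cup\dcl^{\lmt}(b)$; by Remark~\ref{rem:meetfacts} these are finite sets, so $A$ is finite. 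Since $abd$ is the meet-closure of $ab$ with finitely many points of $c\subseteq A$ we have $abd\subseteq\dcl(ab\,A)$, and $d$ is a tuple of $\lmt$-terms in $ab,c$; hence $\tp(ab/A)$ determines $\tp(abd/A)$ and, in particular, the truth value of every atomic $L$-formula on a tuple from $abd$. By quantifier elimination it then suffices to decide, from $\Sigma:=\tp(a/\monster)\cup\tp(b/\monster)\cup\tp(ab/A)$, the truth of $\rho(t_1(ab),t_2(ab))$ for each atomic $\rho$ (one of $=$, $<$, $R_j$, $P_{j'}$, so with at most two slots) and $\lmt$-terms $t_1,t_2$ over $\monster$. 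As $\meet$ is the only function symbol, $t_i(ab)=e_i\meet m_i$ with $e_i\in\dcl^{\lmt}(ab)\subseteq abd$ and $m_i\in\monster$, so each $t_i(ab)$ lies in $abd$ or in $\monster$. If both lie in $abd$ we are done; if both lie in $\monster$ the claim follows once the $\lmt$-part has identified which elements they are; the remaining case is $\rho(f,m^{*})$ with $f\in abd\setminus\monster$ and $m^{*}\in\monster$.

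The $\lmt$-part --- recovering the whole order--meet diagram of $\monster\cup abd$ over $\monster$ from $\Sigma$ --- is where I expect the real labour, and it should run along the lines of the proof of Lemma~\ref{lemma:meetclosedmodulofinite3}; it amounts to weak binarity of $\tdmt$ (the $J=J'=\emptyset$ case), fed the reducts $\tp^{\lmt}(a/\monster),\tp^{\lmt}(b/\monster)$. Each cut $C^\monster_e$ ($e\in abd$) is already fixed by $\Sigma$, since $m<e$ holds precisely when $m$ is below every generator of $e$, a fact recorded in one of the three types. For a mixed generator $e=e_a\meet e_b$ with $e_a\in\dcl^{\lmt}(a)$, $e_b\in\dcl^{\lmt}(b)$ one must determine whether $e\in\monster$, which element it then is, and its position relative to an arbitrary $m\in\monster$: if $C^\monster_{e_a}$ and $C^\monster_{e_b}$ are $\subseteq$-incomparable then $e=\max(C^\monster_{e_a}\cap C^\monster_{e_b})\in\monster$ by Lemma~\ref{lemma:cutremarks3}; if, say, $C^\monster_{e_a}\subseteq C^\monster_{e_b}$, then either $e_a\le e_b$ --- a fact over $\emptyset$, hence in $\tp(ab/A)$ --- so $e=e_a$ and the question is governed by $\tp(a/\monster)$, or $e_a\parallel e_b$ and a case analysis on $C^\monster_e=C^\monster_{e_a}$ using the witness $a_e$, mirroring the three displayed cases of that proof, settles $e$. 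The genuinely delicate sub-case is when $\tp(e_a/\monster)$ and $\tp(e_b/\monster)$ are both of kind (II) over the same point $g$, i.e.\ $e_a$ and $e_b$ lie in new open cones above $g$: the lemma's witnesses, being strictly above $g$ and in arbitrary existing cones, do not detect whether $e_a\meet e_b$ equals $g$ or lies above it, and this is exactly why $A$ must contain $g=\max C^\monster_{e_a}$. Invariance of $\tp(a/\monster),\tp(b/\monster)$ enters precisely here, through the classification of invariant $1$-types: it forces each $\tp(e_a/\monster)$ to be of one of the six kinds, so kind (II) is the only way $e_a$ can fail to branch off $\monster$ at a point of $\monster$.

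It then remains to decide $R_j(f,m^{*})$ for $f\in abd\setminus\monster$, $m^{*}\in\monster$. If $f$ and $m^{*}$ are $\le$-comparable this is false, since $R_j$ relates only $\parallel$-incomparable elements, and comparability is known from the $\lmt$-part; so assume $f\parallel m^{*}$. Writing $f=e\meet m$ with $e=e_a\meet e_b$ as above, some generator of $f$ lies in $\dcl^{\lmt}(a)$ or in $\dcl^{\lmt}(b)$ (otherwise $f\in\monster$); say $e_a\in\dcl^{\lmt}(a)$, so $f\le e_a$, whence $f\meet e_a=f>f\meet m^{*}$ and $m^{*}>f\meet m^{*}$. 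Then condition~3(b) of Definition~\ref{defin:binconexp} gives $R_j(f,m^{*})\coimplica R_j(e_a,m^{*})$, and $R_j(e_a,m^{*})$ is the truth value of an $L(\monster)$-formula at $a$, hence determined by $\tp(a/\monster)$; the case of a $b$-generator is symmetric, and $P_{j'}$ is unary so its values on $abd$ are already in $\tp(ab/A)$. Combining, $\Sigma$ decides every atomic $L(\monster)$-formula on $ab$, so by quantifier elimination $\Sigma\proves\tp(ab/\monster)$, which is~\eqref{eq:wb}. The principal obstacle is the $\lmt$-part: organising the bookkeeping of meets of mixed generators, and in particular noticing that Lemma~\ref{lemma:meetclosedmodulofinite3}'s witnesses must be supplemented by the points $\max C^\monster_e$ to handle the kind-(II) configuration.
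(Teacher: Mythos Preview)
Your strategy is the paper's: apply Lemma~\ref{lemma:meetclosedmodulofinite3} to the concatenation, set $A\supseteq c$, first recover the $\lmt$-diagram, then dispose of the $R_j$. Two points of comparison. Your $R_j$-step is actually cleaner than the paper's three-case analysis: since any $f\in abd\setminus\monster$ lies below some coordinate of $a$ or of $b$, say $e_a$, condition~3(b) of Definition~\ref{defin:binconexp} gives $R_j(f,m^*)\coimplica R_j(e_a,m^*)$ directly, and the right-hand side lies in $\tp(a/\monster)$; the paper instead splits on the position of $e\meet f$ relative to $C^\monster_e$ and reduces to one of $R_j(e,a_e)$, $R_j(h,f)$, or $R_j(b_i,f)$. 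Conversely, the paper's $\lmt$-part is shorter than your mixed-generator case split: for each $e\in d\setminus\monster$ one has $C^\monster_e=C^\monster_{(ab)_i}$ for some $i$, and the single bit ``is $a_e>e$?'' read off from $\tp(ab/c)$ already pins down $\tp^{\lmt}(e/\monster)$. Your enlargement of $A$ by the points $\max C^\monster_e$ is harmless but unnecessary: in the kind-(II) configuration you flag as delicate, $g$ equals $a_e\meet (ab)_i$ for a suitable witness $a_e\in c$ and coordinate $(ab)_i$, so $g\in\dcl^{\lmt}(abc)$ and the formula $e_a\meet e_b>g$ is already decided by $\tp(ab/c)$. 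Relatedly, your claim that invariance ``enters precisely here'' is mistaken---as the paper remarks immediately after the proof, invariance is never used, and~\eqref{eq:wb} in fact holds for arbitrary tuples $a,b$.
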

\begin{proof}
  Let $b^0,b^1$ be tuples each having invariant global type.    By quantifier elimination it is enough to find a finite tuple $c\in \monster$ such that $\tp_x(b^0/\monster)\cup \tp_y(b^1/\monster)\cup \tp_{xy}(b^0b^1/c)$ decides all the atomic relations in $L$ between points of $b^0$, $b^1$, $\monster$, and their meets. Apply Lemma~\ref{lemma:meetclosedmodulofinite3}  to  $b\coloneqq b^0b^1$,  let $d$ be the resulting tuple and set $c\coloneqq d\cap \monster$. We want to show that 
  \[
\pi\coloneqq    \tp(b^0/\monster)\cup \tp(b^1/\monster)\cup \tp(b/c)\proves \tp(b/\monster)
  \]
If $e$ and $f$ are both in $bd$ then $e,f\in \dcl^{\lmt}(bc)$, hence $\tp(b/c)$ entails $\tp(ef/\emptyset)$, and in particular decides all formulas of the forms $R_j(e,f)$ and $P_{j'}(e)$.
\begin{claim}
We have   $\pi\proves \tp^{\lmt}(b/(\monster\restr \lmt))$.
\end{claim}
\begin{claimproof}
Since $\monster bd$ is closed under meets  we  only need to show that the position of all the $e\in d\setminus \monster b$ with respect to $\monster$ is determined. By Lemma~\ref{lemma:cutremarks3} and the fact that $e\in \dcl^{\lmt}(bc)\setminus \monster b$ there is  $i<\abs b$ such that $e< b_i$ and  $C^\monster_e=C^\monster_{b_i}$; note that this information is deduced by $\pi$, because $e$ is a meet of points in $bc$. If $C^\monster_e$ is unbounded, we are done. Otherwise, if $a_e\in c$ is as in Lemma~\ref{lemma:meetclosedmodulofinite3}, all we need to decide is whether $e$ is below or incomparable with $\set{h\in \monster\mid h>a_e\meet e}$. This is decided by whether $a_e> e$ or not, and this information is in $\tp(b/c)$.
\end{claimproof}
We then need to take care of formulas of the form $R_j(e,f)$ for $e\in d\setminus\monster b$ and $f\in \monster$; the argument for formulas of the form $R_j(f,e)$ is identical \emph{mutatis mutandis}. If $e\le f$ or $f\le e$, by hypothesis we must have $\neg R_j(e,f)$, so we may assume that $e\parallel f$.  We distinguish three cases; the fact that, by the  Claim, $\pi$ implies the position of $e$ with respect to $\monster$ will be used tacitly.
\begin{enumerate}
\item\label{point:abovecone} Assume first $e\meet f> C^\monster_e$. Some subcases of this case are depicted in Figure~\ref{figure:conexplemma}. By assumption $C^\monster_e$ is bounded and, if $a_e\in c$ is as in Lemma~\ref{lemma:meetclosedmodulofinite3}, we have $a_e\meet f> C^\monster_e=C^\monster_{e\meet f}$. Since $a_e\meet f$ and $e\meet f$ must be comparable, and $a_e\meet f\in \monster$, this implies $a_e\meet f> e\meet f$, so  $a_e$ and $f$ are in the same open cone above $e\meet f$. By our hypotheses on $T$ then $R_j(e,f)\coimplica R_j(e,a_e)$, but $a_e\in c$ and $e\in \dcl^{\lmt} (bc)$, so since $\pi\proves\tp(b/c)$ we are done.
  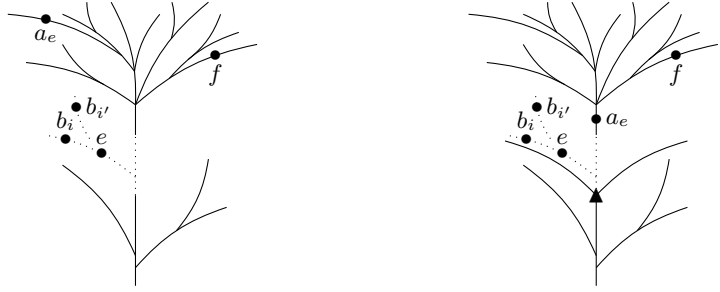
\begin{figure}
    \centering
    \begin{minipage}{0.5\linewidth}
      \centering
      \begin{tikzpicture}[scale=0.8, font=\small]
    \draw[dotted] (0,-1.5) --(0,-0.5);    
    \draw (0,-0.5)-- (0,0);
    \draw [dotted] (0,-1.2) edge [bend right=15] node [pos=0.4] (the_e) {$\bullet$} node [pos=0.4, above]{$e$} node [pos=0.8] {$\bullet$} node [pos=0.8, above]{$b_i$} (-1.5, -0.5);
    \draw [dotted] (the_e) edge [bend left=15] node [pos=0.7] {$\bullet$} node [pos=0.7, right]{$b_{i'}$}(-1, 0.2);
    \draw (0,0) edge[bend left=15] coordinate[pos=0.3] (a) node [pos=0.7] {$\bullet$} node [pos=0.7, below]{$f$} (2,1);
    \draw (a) edge[bend left=15] coordinate  [pos=0.3](b) (1.7,1.2);
    \draw (b) edge[bend right=15]  (1.4,1.5);
    \draw (0,0) edge[bend left=15] coordinate[pos=0.6] (d) (1.2,1.7);
    \draw (d) edge[bend right=15]  (0.6,1.7);
    \draw (0,0) edge[bend right=15] coordinate[pos=0.3] (e) (-0.4,1.7);
    \draw (e) edge [bend right=15] coordinate [pos=0.2] (f) coordinate [pos=0.6] (h) (-1.2, 1.5);
    \draw (e) edge [bend left=15] (0.4, 1.5);
    \draw (f) edge [bend right=15] node [pos=0.7] {$\bullet$} node [pos=0.7, below]{$a_e$} (-2.1, 1.5);
    \draw (h) edge [bend left=15] (-0.8, 1.7);
    \draw (0,0) edge[bend right=15] coordinate[pos=0.3] (i) (-1.8,0.7);
    \draw (i) edge [bend left=15] (-1.2, 1);
    \draw (0, -3) -- (0,-1.5);
    \draw (0,-2.7) edge [bend left=15] coordinate [midway] (j) (1.5,-1.7);
    \draw (j) edge [bend right=15] (1.2, -0.9);
    \draw (0, -2.5) edge [bend right=15] (-1.2, -1);
  \end{tikzpicture}
\end{minipage}
\begin{minipage}{0.5\linewidth}
        \centering
      \begin{tikzpicture}[scale=0.8, font=\small]
        \node (g) at (0,-1.5) {$\blacktriangle$};
        \draw[dotted] (0,-1.5) --(0,-0.5);
            \draw [dotted] (0,-1.2) edge [bend right=15] node [pos=0.4] (the_e) {$\bullet$} node [pos=0.4, above]{$e$} node [pos=0.8] {$\bullet$} node [pos=0.8, above]{$b_i$} (-1.5, -0.5);
    \draw [dotted] (the_e) edge [bend left=15] node [pos=0.7] {$\bullet$} node [pos=0.7, right]{$b_{i'}$}(-1, 0.2);
    \draw (0,-0.5)edge node [midway] {$\bullet$} node [midway, right]{$a_e$} (0,0);
    \draw (0,-1.5) edge [bend left=15] (1.5, -0.6);
    \draw (0,-1.5) edge [bend right=15] (-1.5, -0.6);
    \draw (0,0) edge[bend left=15] coordinate[pos=0.3] (a) node [pos=0.7] {$\bullet$} node [pos=0.7, below]{$f$} (2,1);
    \draw (a) edge[bend left=15] coordinate  [pos=0.3](b) (1.7,1.2);
    \draw (b) edge[bend right=15]  (1.4,1.5);
    \draw (0,0) edge[bend left=15] coordinate[pos=0.6] (d) (1.2,1.7);
    \draw (d) edge[bend right=15]  (0.6,1.7);
    \draw (0,0) edge[bend right=15] coordinate[pos=0.3] (e) (-0.4,1.7);
    \draw (e) edge [bend right=15] coordinate [pos=0.2] (f) coordinate [pos=0.6] (h) (-1.2, 1.5);
    \draw (e) edge [bend left=15] (0.4, 1.5);
    \draw (f) edge [bend right=15] (-2.1, 1.5);
    \draw (h) edge [bend left=15] (-0.8, 1.7);
    \draw (0,0) edge[bend right=15] coordinate[pos=0.3] (i) (-1.8,0.7);
    \draw (i) edge [bend left=15] (-1.2, 1);
    \draw (0, -3) -- (0,-1.5);
    \draw (0,-2.7) edge [bend left=15] coordinate [midway] (j) (1.5,-1.7);
    \draw (j) edge [bend right=15] (1.2, -0.9);
    \draw (0, -2.5) edge [bend right=15] (-1.2, -1);
  \end{tikzpicture}
\end{minipage}
    \caption{two subcases of case~\ref{point:abovecone} in the proof of Theorem~\ref{thm:expdmt}, where $e\meet f> C^\monster_e$. In the first picture, $C^\monster_e$ does not have a maximum. In the second picture it has one, denoted by a triangle.  Solid lines lie in $\monster$, and dotted lines lie in a bigger $\monster_1\satext \monster$. Other subcases are similar, and correspond to different arrangements of $a_e$ and $f$, e.g.~$a_e>f$.}
  \label{figure:conexplemma}
\end{figure}
\item Assume now that $e\meet f\centernot > C^\monster_e$ and there is $h\in \monster$ such that $e\meet h>e\meet f$.
  Then $e$ is in the same open cone above $e\meet f$ as $h$, hence $R_j(e,f)\coimplica R_j(h,f)$. Since $f,h\in \monster$ we are done.
\item If $e\meet f\centernot > C_e^\monster$ but there is no $h$ as in the previous point, then $C^\monster_e$ must have a maximum $g$, which needs to equal $e\meet f$,  and since $e\parallel f$ we need to have $f>g$. If $e$ is in an existing open cone above $g$, since the $R_j$ are on open cones, we are done, so assume it is in a new one.  Since $e\in \dcl^{\lmt}(bc)$, by Lemma~\ref{lemma:cutremarks3} this can only happen if there is $i<\abs b$ such that  $e\le b_i$, hence $e$ shares the same open cone above $g$ as $b_i$. Again, since the $R_j$ are on open cones,  we are done. \qedhere
\end{enumerate}
\end{proof}
\begin{rem}
Weak binarity was introduced as a sufficient condition for $\invtilde$ to be well-defined. Consequently, in Definition~\ref{defin:wb}, we only require~\eqref{eq:wb} to hold for tuples $a$, $b$ such that $\tp(a/\monster)$ and $\tp(b/\monster)$ are invariant. Nevertheless, in the proofs above we never used invariance, hence binary cone-expansions of $\tdmt$ satisfy a condition slightly stronger than weak binarity, obtained  by requiring~\eqref{eq:wb} from Definition~\ref{defin:wb}  to hold for \emph{all} tuples $a$, $b$.
\end{rem}
\section{The domination monoid: pure trees}\label{sec:invtildedmt}
We now compute the domination monoid in $\tdmt$. We first recall  briefly its definition and some of its basic properties for the reader's convenience, and otherwise refer to~\cite{invbartheory}. See also~\cite{mythesis} for a more extensive treatment.

It is well-known that, if $A\smallsubset  \monster\subseteq B$ and  $p\in \invtypes_x(\monster, A)$, then there is a unique $p\invext B$ extending $p$ to an $A$-invariant type over $B$,  given by requiring, for each  $\phi(x;y)\in L(A)$  and $b\in B$, 
 \[
 \phi(x;b)\in p\invext B\iff \tp(b/A)\in (d_p\phi(x;y))(y)
\]
This canonical extension to bigger parameter sets allows to define the \emph{tensor product} of $p\in \invtypes_x(\monster,A)$ with any  $q\in S_y(\monster)$ as follows.   Fix $b\models q$; for each $\phi(x,y)\in L(\monster)$, define
 \[
 \phi(x,y)\in p(x)\otimes q(y)\iff \phi(x,b)\in p\invext \monster b
 \]

Some authors denote by $q(y)\otimes p(x)$ what we denote by $p(x)\otimes q(y)$. 

It is an easy exercise to show that the product $\otimes$ does not depend on $b\models q$, nor on the choice of a base of invariance for $p$, that it is associative, and that if $p$, $q$ are both $A$-invariant then so is $p\otimes q$.

In what follows,  when considering types $p(x)$, $q(y)$, say, we assume without loss of generality that the tuples of variables $x$ and $y$ are disjoint. 
\begin{defin} \label{defin:domination}
Let $p\in S_x(\monster)$ and $q\in S_y(\monster)$. We say that $p$ \emph{dominates} $q$,  and write $p\doms q$, iff there are some small $A$ and some $r\in S_{xy}(A)$ such that
\begin{itemize}
\item $r\in S_{pq}(A)\coloneqq\set{r\in S_{xy}(A)\mid r\supseteq (p\restr A)\cup (q\restr A)}$, and
\item $p(x)\cup r(x,y)\proves q(y)$.
\end{itemize}
In this case, we say that $r$ is a \emph{witness} to, or \emph{witnesses} $p\doms q$. We say that $p$ and $q$ are \emph{domination-equivalent}, and write $p\domeq q$, iff $p\doms q$ and $q\doms p$.
  \end{defin}

  \begin{eg}\label{eg:pushf}
Suppose that $q(y)$ is the \emph{pushforward} of $p(x)$ under the $A$-definable function $f$, namely $q(y)\coloneqq\set{\phi(y)\mid p(x)\proves \phi(f(x))}$. In this case, and in the more general one where $\abs y >1$ and $f$ is a tuple of definable functions, we have  $p\doms q$, witnessed by any completion of $(p(x)\restr A)\cup (q(y)\restr A)\cup \set{y=f(x)}$.
\end{eg}
In Definition~\ref{defin:domination} we are not requiring $p\cup r$ to be a complete global type in variables $xy$; in other words, domination is ``small-type semi-isolation'', as opposed to ``small-type isolation'', i.e.~$\mathrm{F}^\mathrm{s}_{\kappa}$-isolation in the notation of~\cite[Chapter~IV]{classificationtheoryshelah}. While it is easy to see that $\mathrm{F}^\mathrm{s}_{\kappa}$-isolation is the same as domination in every weakly binary theory, the two relations are in general distinct. This can be seen in the theory below; the reader who dislikes random digraphs may feel free to replace them with generic equivalence relations.

\begin{eg}\label{eg:semineeded}
Work in a $2$-sorted language, with sorts $O$ (``objects'') and $D$ (``digraphs'').  Let $L\coloneqq\set{\pow E{O^2}, \pow P{O}, \pow R{O^2\times D}}$, a relational language with arities  indicated as superscripts. Consider the following universal axioms.
  \begin{enumerate}
  \item $E$ is an equivalence relation.
  \item $R(x,y,w)\implica E(x,y)$.
  \item $R(x,y,w)\implica\neg R(y,x,w)$.
  \end{enumerate}
The finite structures satisfying these axioms form a Fra\"iss\'e class; let $T$  be the theory of its limit. In a model of $T$, the sort $O$ carries an equivalence relation with infinitely many classes.  On each class $a/E$ the predicate $P$ is infinite and coinfinite, and each point of $D$ induces a random digraph on each $a/E$. Different random digraphs, on the same $a/E$ or on different ones, interact generically with $P$ and with each other, but no digraph has an edge across different classes. Let $x$ be a variable of sort $O$, define  $\pi(x)\coloneqq \set{\neg E(x,d)\mid d\in \monster}$,  and   let $p(x)\coloneqq \pi(x)\cup \set{P(x)}$ and  $q(y)\coloneqq \pi(y)\cup \set{\neg P(y)}$. By quantifier elimination and the lack of edges across different classes,  $p$ and $q$ are complete global types, in fact $\emptyset$-invariant ones. Any $r\in S_{pq}(\emptyset)$ containing   $\rho(x,y)\coloneqq E(x,y)\land P(x)\land \neg P(y)$ witnesses simultaneously that $p\doms q$ and that $q\doms p$, since $p\cup \set \rho\proves q$ and $q\cup \set\rho\proves p$. Note that the predicate $P$ forbids $r$ from containing $x=y$.  By genericity, there is no small $A$ such that for some $r\in S_{pq}(A)$ the partial type $p\cup r$ decides, for all $d\in \monster$, whether $R(x,y,d)$ holds, and similarly for $q\cup r$. Hence, for all $a\models p$ and $b\models q$, neither  $\tp(a/\monster b)$ nor $\tp(b/\monster a)$ is   $\mathrm{F}^\mathrm{s}_{\kappa}$-isolated.
\end{eg}
It can be shown that $\domeq$ is a preorder, hence $\domeq$ is an equivalence relation.
Let $\invtilde$ be the quotient of $\invtypes(\monster)$ by $\domeq$. The partial order induced by $\doms$ on $\invtilde$ will, with abuse of notation, still be denoted by $\doms$, and we call $(\invtilde, \doms)$ the \emph{domination poset}. This poset has a minimum, the (unique) class of \emph{realised types}, i.e.~global types realised in $\monster$, denoted by $\class 0$.

If $T$ is  such that  $(\invtypes(\monster), \otimes, \doms)$ is a preordered semigroup, we say that $\otimes$ \emph{respects} $\doms$. In particular, then $\domeq$ is a congruence with respect to $\otimes$, and induces a well-defined operation on $\invtilde$, still denoted by $\otimes$, easily seen to have neutral element $\class 0$. Call the structure $(\invtilde, \otimes, \class 0, \doms)$ the \emph{domination monoid}. We usually denote it simply by $\invtilde$, and say that \emph{$\invtilde$ is well-defined} to mean that $\otimes$ respects $\doms$; this should cause no confusion since  $\invtilde$ is \emph{always} well-defined as a poset.

As shown in~\cite{invbartheory}, $\invtilde$  need not be well-defined in general, but it is in certain classes of theories, such as stable ones. More relevantly to the present endeavour, we recall the following.
\begin{fact}[\!\!{\cite[Corollary~1.30]{invbartheory}}]\label{fact:wbinwd}
In every weakly binary theory, the partially ordered monoid $(\invtilde, \otimes, \class 0, \doms)$ is well-defined.
\end{fact}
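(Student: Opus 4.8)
The plan is to reduce the claim to \emph{monotonicity of $\otimes$ in each argument} plus a triviality. Unwinding the definitions, ``$\invtilde$ well-defined'' means that $\otimes$ respects $\doms$, i.e.\ that $(\invtypes(\monster),\otimes,\doms)$ is a preordered semigroup; associativity of $\otimes$ is known, and that $\class0$ is a two-sided identity is immediate (for a realised type $q_0$ one checks, via Example~\ref{eg:pushf} applied to the maps ``append the realising constant'' and ``project'', that $p\otimes q_0\domeq p\domeq q_0\otimes p$). Since $\doms$ is a preorder, what remains is: if $p_1\doms p_2$ then $p_1\otimes q\doms p_2\otimes q$ and $q\otimes p_1\doms q\otimes p_2$, for every $q\in\invtypes(\monster)$; chaining these two then makes $\domeq$ a congruence.

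The only input drawn from weak binarity is the following reformulation, which I would establish first: if $p,q\in\invtypes(\monster)$ and $(a,b)\models p\otimes q$, then $\tp(a/\monster)=p$ and $\tp(b/\monster)=q$ are invariant, so Definition~\ref{defin:wb} supplies a small $A_0\smallsubset\monster$ with
\[
  p\cup q\cup\bigl((p\otimes q)\restr A_0\bigr)\proves p\otimes q.
\]
In particular, \emph{whenever} $a'\models p$, $b'\models q$ and $\tp(a'b'/A_0)=(p\otimes q)\restr A_0$, we have $\tp(a'b'/\monster)=p\otimes q$. Thus a product of invariant types is pinned down by its trace on a suitable small set, and this is what will let us build domination witnesses from small data.

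I would then treat the monotonicity in the second tensor factor as the clean case: given $q_1\doms q_2$, to show $p\otimes q_1\doms p\otimes q_2$, fix a witness $s\in S_{q_1q_2}(B)$ with $q_1\cup s\proves q_2$ (Definition~\ref{defin:domination}) and enlarge $B$ to contain a set $A_0$ as above for $p\otimes q_2$. Realise $\beta\models q_1$; then $\beta'\models s(\beta,y')$ (consistent, since $s$ extends $\tp(\beta/B)=q_1\restr B$), so $\beta'\models q_2$; finally realise $\alpha\models p\invext\monster\beta\beta'$. Then $(\alpha,\beta)\models p\otimes q_1$ and $(\alpha,\beta')\models p\otimes q_2$, since in each case the second coordinate realises the right type while $\alpha$ realises the canonical extension of $p$ over $\monster$ together with that coordinate. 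Let $r^\ast:=\tp(\alpha\beta\alpha\beta'/B)$, read in the variables $x,y,x',y'$: it is a complete type over $B$ extending $(p\otimes q_1)\restr B$ and $(p\otimes q_2)\restr B$, it entails $x=x'$, and its restriction to $(y,y')$ is $s$. Now if $(\gamma,\delta)\models p\otimes q_1$ and $\tp(\gamma\delta\gamma'\delta'/B)=r^\ast$, then $\gamma'=\gamma$; reading off $\tp(\gamma/\monster)=p$, $\tp(\delta/\monster)=q_1$, and $\tp(\delta\delta'/B)=s$ (whence $\tp(\delta'/\monster)=q_2$ as $q_1\cup s\proves q_2$), together with $\tp(\gamma'\delta'/A_0)=\tp(\alpha\beta'/A_0)=(p\otimes q_2)\restr A_0$, the reformulation above forces $\tp(\gamma'\delta'/\monster)=p\otimes q_2$. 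Hence $r^\ast$ witnesses $p\otimes q_1\doms p\otimes q_2$.

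For the monotonicity in the first factor, $p_1\otimes q\doms p_2\otimes q$ when $p_1\doms p_2$, the very same mechanism works once one produces a tuple $(a_1,b,a_2,b)$ with $\tp(a_1b/\monster)=p_1\otimes q$, $\tp(a_2b/\monster)=p_2\otimes q$ and $\tp(a_1a_2/A)=r$ (where $r\in S_{p_1p_2}(A)$ witnesses $p_1\doms p_2$); the verification that $\tp(a_1ba_2b/A^\ast)$, for a suitable small $A^\ast\supseteq A$, witnesses the domination is then word-for-word as in the previous paragraph. The difficulty --- and the step I expect to be the main obstacle --- is producing that tuple: a realisation of $p_i\otimes q$ has the shape ``$b\models q$, then $a_i\models p_i\invext\monster b$'', so the common parameter $b\models q$ must be realised first, and one then needs $a_1\models p_1\invext\monster b$ and $a_2\models p_2\invext\monster b$ with $\tp(a_1a_2/A)=r$, i.e.\ the consistency of
\[
  \bigl(p_1\invext\monster b\bigr)(x_1)\ \cup\ \bigl(p_2\invext\monster b\bigr)(x_2)\ \cup\ r(x_1,x_2).
\]
Here two elements constrained to one another by the small witness $r$ must \emph{both} sit canonically over the enlarged base $\monster b$. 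I would attack this by compactness: realise $a_1\models p_1\invext\monster b$; since $a_1\models p_1$, the type $r(a_1,x_2)$ is a consistent complete type over $Aa_1$ every realisation of which satisfies $p_2$; it then suffices to see that $r(a_1,x_2)$ is consistent with $p_2\invext\monster b$, which I would deduce by comparing the defining scheme of $p_2$ with finite parts of $r$, exploiting the invariance of $q$ (so that $\monster b$ behaves like a mild enlargement of a model) and, once more, the finite-data description of $p_2\otimes q$ from the reformulation. Granting this last consistency, both monotonicities hold, $\domeq$ is a congruence for $\otimes$, and --- with $\class0$ as identity --- the partially ordered monoid $(\invtilde,\otimes,\class0,\doms)$ is well-defined.
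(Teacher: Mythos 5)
This Fact is imported from \cite[Corollary~1.30]{invbartheory}; the paper under review contains no proof of it, so I can only judge your argument on its own merits. Your reduction (neutral element plus monotonicity of $\otimes$ in each factor), your reformulation of weak binarity via a small trace $A_0$, and your proof of the second-factor case $q_1\doms q_2\Rightarrow p\otimes q_1\doms p\otimes q_2$ are all correct: realising $\beta\models q_1$, then $\beta'\models s(\beta,y')$, then $\alpha\models p\invext\monster\beta\beta'$ does produce the coherent quadruple, because the shared factor $p$ is realised last, over both $\beta$ and $\beta'$, and the verification through $\tp(\gamma'\delta'/A_0)=(p\otimes q_2)\restr A_0$ is sound.

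The gap is exactly where you flag it, and it is genuine: for $p_1\doms p_2\Rightarrow p_1\otimes q\doms p_2\otimes q$ the shared factor $q$ is realised \emph{first}, so you must find $b\models q$, $a_1\models p_1\invext\monster b$, $a_2\models p_2\invext\monster b$ with $(a_1,a_2)\models r$, i.e.\ prove consistency of $r(a_1,x_2)\cup(p_2\invext\monster b)(x_2)$, and what you offer for this is a plan, not an argument; moreover the tools you name (invariance of $q$, another use of weak binarity) are not the ones that do the work. The statement is in fact true in an arbitrary theory, but it rests on a lemma you have not proved: the witnessed entailment persists under canonical extension, that is, $p_1\invext\monster_1\cup r\proves p_2\invext\monster_1$ for any $\monster_1\satext\monster$ containing $b$. (Sketch: given $\phi(x_2;c)\in p_2\invext\monster_1$, choose $c'\in\monster$ with $c'\equiv_A c$, use compactness to find $\psi(x_1;d)\in p_1$ and a finite $\rho\subseteq r$ with $\psi\land\rho\proves\phi(x_2;c')$ modulo $T$, and apply an automorphism over $A$ sending $c'$ to $c$; its image $d''$ of $d$ satisfies $\psi(x_1;d'')\in p_1\invext\monster_1$ by invariance of $p_1$, yielding $\phi(x_2;c)$.) Granting this, realise $a_1\models p_1\invext\monster_1$ and then $a_2\models r(a_1,x_2)$: the lemma gives $a_2\models p_2\invext\monster_1$, and restricting to $\monster b$ produces the required tuple; since $r$ has parameters in $\monster$, conjugacy over $\monster b$ then gives the consistency for \emph{every} $a_1\models p_1\invext\monster b$, which is your claim. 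With this lemma supplied your proof closes; without it, the witness for first-factor monotonicity has not been produced, and that is the key step separating the two sides of well-definedness.
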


Recall that two types $p(x),q(y)\in S(B)$ are \emph{weakly orthogonal}, denoted by $p\wort q$, iff $p(x)\cup q(y)$ is a complete type in $S_{xy}(B)$. In particular, if $p,q\in\invtypes(\monster)$, then $p(x)\otimes q(y)=q(y)\otimes p(x)$, since both extend $p(x)\cup q(y)$. We will also need the following two facts.

\begin{fact}[\!\!{\cite[Corollary~4.7]{invnip}}]\label{fact:wortnipinv} Let $T$ be $\mathsf{NIP}$ and  $\set{p_i\mid i\in I}$ be a family of types $p_i(x^i)\in \invtypes(\monster)$ such that if $i\ne j$ then $p_i\wort p_j$. Then $\bigcup_{i\in I} p_i(x^i)$ is complete. 
\end{fact}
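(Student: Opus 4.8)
The plan is to reduce to finitely many types by a compactness argument and then, via a short induction, to the case of three types, which is the only place where the $\mathsf{NIP}$ hypothesis is really needed.

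First, $\bigcup_{i\in I}p_i(x^i)$ is consistent: any finite subset mentions only indices in some finite $I_0$, and lies inside the iterated tensor product $\bigotimes_{i\in I_0}p_i$, a consistent global type which, directly from the definition of $\otimes$, contains $\bigcup_{i\in I_0}p_i(x^i)$. Since a formula over $\monster$ involves only finitely many of the $x^i$, it suffices to prove completeness of $\bigcup_{i\in I_0}p_i(x^i)$ for every finite $I_0$, which I would do by induction on $n\coloneqq\abs{I_0}$. The cases $n\le 2$ are the hypothesis. For $n\ge 3$, set $q\coloneqq p_1\cup\dots\cup p_{n-2}$: by the inductive hypothesis $q$ is complete, hence equals $p_1\otimes\dots\otimes p_{n-2}$ and is invariant, and $q,p_{n-1},p_n$ are pairwise weakly orthogonal, since $q\cup p_{n-1}=p_1\cup\dots\cup p_{n-1}$ and $q\cup p_n=p_1\cup\dots\cup p_{n-2}\cup p_n$ are complete by the inductive hypothesis while $p_{n-1}\wort p_n$ by assumption. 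Applying the three-type case to $q,p_{n-1},p_n$ then shows $p_1\cup\dots\cup p_n$ complete. So everything reduces to the following: if $p_1,p_2,p_3\in\invtypes(\monster)$ are pairwise weakly orthogonal, then $p_1\cup p_2\cup p_3$ is complete.

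To prove this, fix a small $A$ over which $p_1,p_2,p_3$ are invariant and take $a_j\models p_j$ for $j=1,2,3$. Since $p_2\wort p_3$, the type $\tp(a_2a_3/\monster)=p_2\cup p_3$ is already complete, so it is enough to show that $\tp(a_1/\monster a_2a_3)$ is determined; and from $p_1\wort p_2$ and $p_1\wort p_3$ one reads off, unwinding $\otimes$, that $\tp(a_1/\monster a_2)=p_1\invext\monster a_2$ and $\tp(a_1/\monster a_3)=p_1\invext\monster a_3$, the canonical $A$-invariant extensions. Hence $\tp(a_1/\monster a_2a_3)$ and $p_1\invext\monster a_2a_3$ have the same restrictions to $\monster a_2$ and to $\monster a_3$, and it is enough to check that $\tp(a_1/\monster a_2a_3)$ is $A$-invariant: by uniqueness of the canonical extension it is then equal to $p_1\invext\monster a_2a_3$, hence determined. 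This is the step that genuinely uses $\mathsf{NIP}$, and I expect it to be the main obstacle. The idea is to pass to a Morley sequence $(a_2^ka_3^k)_{k<\omega}$ of the invariant type $\tp(a_2a_3/\monster)$ over $\monster$ --- it is $\monster$-indiscernible, with $a_2^0a_3^0=a_2a_3$, and $\tp(a_2^k/\monster)=p_2$, $\tp(a_3^k/\monster)=p_3$ for every $k$, so that also $\tp(a_1/\monster a_2^k)=p_1\invext\monster a_2^k$ and $\tp(a_1/\monster a_3^k)=p_1\invext\monster a_3^k$ --- and to use the $\mathsf{NIP}$ hypothesis, in the form of the finiteness of the alternation rank of formulas along indiscernible sequences (equivalently, the machinery of eventual types), to force the type of $a_1$ over $\monster$ together with the whole sequence, and so in particular $\tp(a_1/\monster a_2a_3)$, to be governed by the defining scheme of $p_1$ over $A$. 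It is exactly this passage from pairwise to joint information that requires $\mathsf{NIP}$, and that I would expect to constitute the content behind the cited corollary, the reduction above being routine.
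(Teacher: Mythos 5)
The paper offers no proof of this statement: it is imported verbatim as a Fact from Simon's \emph{Invariant types in NIP theories}, so there is no internal argument to compare yours against, and your proposal has to stand on its own. The reductions you perform are correct and genuinely routine: consistency via the tensor product, reduction to finite subfamilies since formulas mention finitely many variables, the induction in which $p_1\cup\dots\cup p_{n-2}$ is complete by hypothesis, hence equals $p_1\otimes\dots\otimes p_{n-2}$ and is invariant, and the reformulation of the three-type case as showing that $\tp(a_1/\monster a_2a_3)$ is $A$-invariant and therefore, by uniqueness of the $A$-invariant extension, equal to $p_1\invext\monster a_2a_3$; equivalently, that pairwise weak orthogonality of $p_1,p_2,p_3$ forces $p_1\wort(p_2\otimes p_3)$.

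The problem is that this last step is the whole content of the Fact, and you do not prove it: you name a strategy (Morley sequence of $p_2\otimes p_3$ over $\monster$ starting at $a_2a_3$, finite alternation of $\phi(a_1;a_2^k,a_3^k,m)$ along it) and then explicitly defer its execution to ``the content behind the cited corollary''. Nothing in the sketch explains why the eventually constant truth value along the sequence agrees with its value at $k=0$, i.e.\ why the sequence remains indiscernible over $\monster a_1$, or how the pairwise hypotheses $p_1\wort p_2$ and $p_1\wort p_3$ are combined to control formulas $\phi(x;a_2,a_3,m)$ mentioning $a_2$ and $a_3$ \emph{simultaneously} --- and this is precisely where $\mathsf{NIP}$ must be used, since the statement fails without it, so no purely formal manipulation of invariant extensions can close the gap. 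In short: the scaffolding (reduction to three types and to invariance of $\tp(a_1/\monster a_2a_3)$) is sound and correctly locates the difficulty, but the proof of the key step is missing, so the proposal does not establish the Fact; as the paper does, one should either cite Simon's argument or actually carry out the indiscernibility/eventual-type analysis.
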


\begin{fact}[\!\!{\cite[Proposition~3.13 and Corollary~3.14]{invbartheory}}]
Let $p_0, p_1\in \invtypes(\monster)$, $q\in S(\monster)$, and assume that $p_0\doms p_1$. If $p_0\wort q$, then $p_1\wort q$. If $p_0\doms q$ and $p_0\wort q$, then $q$ is realised.
\end{fact}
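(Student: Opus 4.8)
The plan is to deduce the second assertion from the first, and to prove the first by assembling an explicit configuration of realisations. For the reduction, observe that $q$ is in fact invariant: if $r\in S_{p_0q}(A)$ witnesses $p_0\doms q$ with $A$ a base of invariance for $p_0$, then for $a\models p_0$ every realisation of the small partial type $r(a,y)$ satisfies $q$, so $r(a,y)\proves q(y)$, and transferring $A$-invariance from $\tp(a/\monster)$ along this entailment shows $q$ is $A$-invariant. Applying the first assertion with $p_1\coloneqq q$ then gives $q\wort q$, i.e.\ that $q(x)\cup q(y)$ is complete over $\monster$; but if $q$ were unrealised in $\monster$ we could pick $b\models q$ and $b'\models q$ with $b'\ne b$, and the pairs $bb$ and $bb'$ would both realise $q(x)\cup q(y)$ while $x=y\in\tp(bb/\monster)$ and $x\ne y\in\tp(bb'/\monster)$, a contradiction. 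Hence $q$ is realised.

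For the first assertion I would fix a witness $r\in S_{p_0p_1}(A)$ of $p_0\doms p_1$, enlarging $A$ to a common base of invariance for $p_0$ and $p_1$. The crucial first step is a \emph{converse reading} of the witness: for every $a_1\models p_1$ there is $a_0\models p_0$ with $\tp(a_0a_1/A)=r$. This is a compactness argument exploiting the homogeneity of $\monster$ — if $p_0(x_0)\cup r(x_0,a_1)$ were inconsistent, offending instances $\psi(x_0)\in p_0$ and $\chi(x_0,x_1)\in r$ would yield $\models\forall x_0\,(\psi(x_0)\to\neg\chi(x_0,a_1))$, which (its parameters lying in $\monster$) would persist with $a_1$ replaced by any $a_1^*\models p_1$, contradicting the domination hypothesis applied to some $a_0\models p_0$ and an $a_1^*$ realising $r(a_0,x_1)$ over $Aa_0$.

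To conclude it suffices to show $p_1(x_1)\cup q(y)$ is complete over $\monster$. Given two realisations $a_1b$ and $a_1'b'$ of it, since $b\equiv_\monster b'$ we may conjugate over $\monster$ to assume $b=b'$, and must show $\tp(a_1b/\monster)=\tp(a_1'b/\monster)$. By the converse reading pick $a_0,a_0'\models p_0$ with $\tp(a_0a_1/A)=\tp(a_0'a_1'/A)=r$. Since $p_0\wort q$, both $\tp(a_0b/\monster)$ and $\tp(a_0'b/\monster)$ equal the unique completion of $p_0(x_0)\cup q(y)$, so $\tp(a_0/\monster b)=p_0\invext(\monster b)$ is $A$-invariant and, conjugating over $\monster b$, we may also take $a_0=a_0'$; one is then reduced to deriving $\tp(a_1b/\monster)=\tp(a_1'b/\monster)$ from $\tp(a_0a_1/A)=\tp(a_0a_1'/A)=r$. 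This last step is the one I expect to be the main obstacle: some automorphism fixing $Aa_0$ sends $a_1$ to $a_1'$, and it must be promoted to one fixing all of $\monster b$. It should work because, $\tp(a_0/\monster b)$ being $A$-invariant, the position of $b$ relative to $a_0$ is computed from $A$ via the defining scheme of $p_0$, so the only feature of $\tp(a_1/\monster a_0)$ detectable by $b$ is its restriction $r(a_0,x_1)$ to $Aa_0$, which $a_1$ and $a_1'$ have in common; turning this observation into an honest conjugation — the bookkeeping of which I would follow~\cite{invbartheory} for — has to be routed through canonical invariant extensions rather than through substitution into a term, precisely because $p_0\doms p_1$ supplies only semi-isolation of $a_1$ over $\monster a_0$, not a definable function recovering it.
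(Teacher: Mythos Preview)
The paper does not prove this statement: it is quoted as a Fact from~\cite{invbartheory}, so there is no in-paper argument to compare against. On its own merits, your reduction of the second assertion to the first is correct (the observation that $q$ must be $A$-invariant is right, and $q\wort q$ forces $q$ to be realised), and so are the converse reading in step~2 and the conjugations through step~5.

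The genuine gap is step~6, and it is more than bookkeeping. After your reductions you have a single $a_0$ with $\tp(a_0/\monster b)=p_0\invext \monster b$ and $a_1\equiv_{Aa_0}a_1'$, and you need $a_1\equiv_{\monster b}a_1'$. But an automorphism fixing $Aa_0$ and taking $a_1$ to $a_1'$ has no reason to fix, or even stabilise setwise, $\monster b$; the $A$-invariance of $\tp(a_0/\monster b)$ constrains how $a_0$ sits over $\monster b$, not how $a_1$ does. Your heuristic would need $(p_0\invext\monster b)(x_0)\cup r(x_0,x_1)$ to imply a complete type in $x_1$ over $\monster b$---in effect, that the domination witness $r$ lifts from $\monster$ to $\monster b$---and nothing in the setup provides this. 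The intuition that ``$b$ can only see $a_1$ through $a_0$'' is unjustified: nothing prevents $a_1$ from having relationships with elements of $\monster b$ that are not mediated by $a_0$, and controlling only $\tp(a_1/Aa_0)$ leaves $\tp(a_1/\monster b)$ underdetermined. A different idea is needed here; you should consult the argument in the cited reference rather than assume it is routine.
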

In particular we may endow $\invtilde$ with an additional relation, induced by $\wort$ and denoted by the same symbol.

In what follows, if $r\in S_{pq}(A)$  witnesses $p\doms q$,  by passing to a suitable extension of $r$ there is no harm in enlarging $A$, provided it stays small, which we may do tacitly; if $p,q$ are invariant, we will furthermore assume  $A$ to be large enough so that $p,q\in \invtypes(\monster, A)$. Sometimes, we say that $r$ witnesses domination even if it is not complete, but merely consistent with $(p(x)\cup q(y))\restr A$. In that case, we mean that any of its completions to a type in $S_{pq}(A)$ does. Similarly, we sometimes just write  e.g.~``put in $r$ the formula $\phi(x,y)$''.

\begin{pr}\label{pr:treebasic}
The following statements hold in $\tdmt$.
  \begin{enumerate}
  \item Suppose all coordinates of $p\in S(\monster)$ have the same cut $C_{0}$,  all coordinates of $q\in S(\monster)$ have the same cut $C_1$, and $C_0\ne C_1$. Then $p\wort q$.
  \item Let $C$ be a cut with maximum $g$.  Suppose that all $1$-subtypes of $p$ are of kind (Ib) with cut $C$ and all $1$-subtypes of $q$ are  of kind (II) with cut $C$, or that all $1$-subtypes of $p,q$ are of kind (Ib) with cut $C$, but no open cone above $g$ contains both a coordinate of $p$ and one of $q$.  Then $p\wort q$.
  \item Every $1$-type of kind (IIIa) is domination-equivalent to the unique $1$-type of kind (Ia) with the same cut. Every $1$-type of  kind (IIIb) is domination-equivalent to the unique  $1$-type of kind (Ib) with the same cut and, if this cut has a maximum $g$, the same open cone above $g$.
  \end{enumerate}
  In particular, if $p,q\in \invtypes_1(\monster)$, then either $p\wort q$ or $p\domeq q$.
\end{pr}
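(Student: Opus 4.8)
The plan is to prove parts~(1) and~(2) by quantifier elimination, reducing weak orthogonality to a combinatorial fact about meets; to prove part~(3) by realising $q$ as a pushforward of $p$ in one direction and writing down an explicit small‑type witness in the other; and to deduce the concluding ``in particular'' clause by running through the classification of kinds.

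\textbf{Parts~(1) and~(2).} Fix $a\models p$ and $b\models q$; by quantifier elimination $p\wort q$ follows once we show that $p\cup q$ pins down the quantifier‑free type over $\monster$ of every point of $\dcl^{\lmt}(ab)$. Since $p$ and $q$ already control $\dcl^{\lmt}(a)$ and $\dcl^{\lmt}(b)$, the only new content is the behaviour of the ``mixed meets'' $a_*\meet b_*$ with $a_*\in\dcl^{\lmt}(a)$ and $b_*\in\dcl^{\lmt}(b)$. By Lemma~\ref{lemma:cutremarks3}(2), if $a_*\notin\monster$ then $C^\monster_{a_*}$ is the common cut of the coordinates of $p$ and, when that cut has a maximum, $a_*$ lies in a cone of one of those coordinates; similarly for $b_*$. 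In situation~(1), writing $C_0\ne C_1$ for the two cuts: if they are $\subseteq$‑comparable, say $C_0\subsetneq C_1$, pick $m\in C_1\setminus C_0$ and show, from the tree axioms and Lemma~\ref{lemma:cutremarks3}(1), that $a_*\meet b_*=a_*\meet m\in\dcl^{\lmt}(a)\cup\monster$; if they are incomparable, pick $m_0\in C_0\setminus C_1$ and $m_1\in C_1\setminus C_0$ and show $a_*\meet b_*=m_0\meet m_1\in\monster$. In situation~(2) the cut is shared with maximum $g$, and the hypotheses exactly guarantee that the cone of $a_*$ above $g$ differs from that of $b_*$ — either because a coordinate of $p$ and a coordinate of $q$ never share a cone above $g$, or because the coordinates of $q$ sit in new cones and those of $p$ in existing ones — so that $a_*\meet b_*=g\in\monster$. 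In every case all these equalities are entailed by $p\cup q$, so $\tp(ab/\monster)$ is determined and $p\wort q$.

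\textbf{Part~(3).} For $p$ of kind (IIIa), let $c\in\monster$ be as in its definition, put $y=x\meet c$ for $x\models p$, and let $q=\tp(y/\monster)$, of kind (Ia). I would first check $C_p=C_q$: any $m\in\monster$ with $m<x$ must satisfy $m\le c$ (otherwise $m\parallel c$ or $m>c$, and in either case the tree axioms force $m\meet c=x\meet c=y$, absurd since $y\notin\monster$), hence $m\le y$; the reverse inclusion is clear. Thus $q$ is the unique (Ia) type with cut $C_p$, and being the pushforward of $p$ along the $c$‑definable map $x\mapsto x\meet c$ it satisfies $p\doms q$ by Example~\ref{eg:pushf}. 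For $q\doms p$, pick $A\ni c$ large enough and let $r$ be any completion over $A$ of $(p\restr A)\cup(q\restr A)\cup\set{x\meet c=y,\ x>y}$, consistent via $(x,x\meet c)$. Then $q(y)\cup r(x,y)\proves p(x)$: for a realisation $(x',y')$ and $b\in\monster$ one gets $x'\meet b=b$ when $b<y'$, $x'\meet b=b\meet y'$ when $b\parallel y'$ (both immediate from $x'>y'$), and $x'\meet b=y'$ when $b>y'$ (since $x'\meet b>y'$ would force $b\meet c=y'\in\monster$), so $\tp(x'/\monster)=p$. The kind (IIIb) case is identical with (Ib) in place of (Ia), additionally noting that when the cut has a maximum $g$ the cone of $x$ above $g$ equals that of $x\meet c$, so that $q$ is the prescribed (Ib) type.

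\textbf{The ``in particular'' clause, and the main obstacle.} For $p,q\in\invtypes_1(\monster)$, their cuts are bounded by Lemma~\ref{lemma:cutremarks3}(3); kind (0) types form the single class $\class 0$ and are weakly orthogonal to everything, so assume $p,q$ are nonrealised. Using part~(3) together with the fact recalled above that $p_0\doms p_1$ and $p_0\wort q$ imply $p_1\wort q$, we may replace a (IIIa) or (IIIb) type by the $\domeq$‑equivalent (Ia) or (Ib) type, reducing to the case where $p$ and $q$ are each of kind (Ia), (Ib), or (II). If their cuts differ, part~(1) gives $p\wort q$. If the common cut has no maximum, kind (II) is impossible and each of $p,q$ is the unique nonrealised type with that cut living in the linear part, so $p=q$; if the common cut has maximum $g$, each of $p,q$ is either the (II) type over $g$ or the unique (Ib) type over a fixed existing cone above $g$, whence $p=q$ when these data agree, $p\wort q$ by part~(2) when they are two distinct existing cones, and $p\wort q$ by part~(2) when one is an existing cone and the other the (II) type. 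The step I expect to be most delicate is the direction $q\doms p$ in part~(3): producing a \emph{single} small type $r$ such that $q\cup r$ rigidly determines $\tp(x/\monster)$ — equivalently, checking that the local equation $x\meet c=y$ propagates to govern $x\meet b$ for \emph{every} $b\in\monster$, the only non‑routine point being the exclusion of $x\meet b>y$ for $b>y$, which uses that this would force $b\meet c$ to equal the non‑realised point $y$.
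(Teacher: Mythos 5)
Your proposal is correct and, in outline, matches the paper's proof: parts (1) and (2) are exactly the paper's argument (quantifier elimination plus Lemma~\ref{lemma:cutremarks3}), with the mixed meets written out explicitly --- just note that $a_*$ and $b_*$ should range over $\dcl^{\lmt}(\monster a)$ and $\dcl^{\lmt}(\monster b)$ rather than $\dcl^{\lmt}(a)$ and $\dcl^{\lmt}(b)$, which is what Lemma~\ref{lemma:cutremarks3} provides and what your case analysis actually uses. The one real divergence is the witness for $q\doms p$ in part (3): the paper takes $r$ extending $\set{a>(x\meet c)>y\mid a\in A}$, i.e.\ it only interpolates $x\meet c$ strictly between $y$ and the coinitial set $A$, whereas you pin it down with $x\meet c=y$ together with $x>y$. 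Your key verification --- that for $b\in\monster$ with $b>y$ one cannot have $x\meet b>y$, since $x\meet c\ge(x\meet b)\meet(b\meet c)$ would then force $b\meet c=y\notin\monster$ --- is correct and works uniformly for kinds (IIIa) and (IIIb), so your witness does prove the proposition. Be aware, though, that the paper avoids the equality $x\meet c=y$ deliberately: its footnote and the remark immediately following this proposition concern the finer relation of equidominance, and your single $r$ would witness both directions at once, which for the empty-cut (IIIb) case is in tension with that remark; this does not affect the proposition, which only asserts domination-equivalence, but you should compare your argument with that remark before claiming anything stronger. Finally, you prove the concluding ``in particular'' clause in more detail than the paper does; the only point left implicit is that over a common cut with no maximum, kinds (Ia) and (Ib) cannot both occur (by saturation, a cut of the form $\set{b\mid b<A}$ with no maximum has large cofinality, while the cut of a type of kind (Ib) has small cofinality), which is what justifies the step ``$p=q$'' there --- a gloss rather than a gap.
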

\begin{proof}\*
  \bigcircled 1 By quantifier elimination and the first two points of Lemma~\ref{lemma:cutremarks3}.

  \bigcircled 2 This does not follow from the previous point because such types have the same cut, but it is still easy from quantifier elimination and the fact that the open cones in which types of kind (II) concentrate are new, while those of types of kind (Ib) are realised.

  \bigcircled 3 We give a proof for kind (IIIa) which may be easily modified to yield one for kind (IIIb). Suppose that $c\in \monster$ and $A\smallsubset \monster$ are such that  \[p(x)\proves\set{x\centernot \le b\mid b\in \monster}\cup\set{x\meet c<a\mid a\in A}\cup\set{x\meet c> b\mid b\in \monster, b< A}\]
    Let  $q$ be the pushforward of $p$ under the definable function $x\mapsto x\meet c$. By this very description  $p(x)\doms q(y)$ (cf.~Example~\ref{eg:pushf}) and, by definition of kind (IIIa),  $q$ is of kind (Ia), and clearly has the same cut as $p$. To prove $q(y)\doms p(x)$, use some $r\in S_{pq}(A)$ extending $\set{a>(x\meet c)>y\mid a\in A}$; since $r$ contains $p(x)\restr A$, which proves $x\centernot \le a$ for all $a\in A$, we are done. See Figure~\ref{figure:somedomination}.
  \end{proof}
  \begin{figure}[t]
  \centering
  \begin{tikzpicture}[scale=0.8, font=\small]
    \node (g) at (0,0) {$\phantom c\blacktriangle c$};
    \draw[dotted] (0,-1.5) edge node [pos=0.3](x) {$\bullet$} node [pos=0.3, right]{$y$} (0,0);    
    \draw [dotted] (0,-0.8) edge [bend right=15] node [pos=0.7] {$\bullet$} node [pos=0.7, above]{$x$} (-1.5, -0.3);
    \draw (0,0) edge[bend left=15] coordinate[pos=0.3] (a) (2,1);
    \draw (a) edge[bend left=15] coordinate  [pos=0.3](b) (1.7,1.2);
    \draw (b) edge[bend right=15]  (1.4,1.5);
    \draw (0,0) edge[bend left=15] coordinate[pos=0.6] (d) (1.2,1.7);
    \draw (d) edge[bend right=15]  (0.6,1.7);
    \draw (0,0) edge[bend right=15] coordinate[pos=0.3] (e) (-0.4,1.7);
    \draw (e) edge [bend right=15] coordinate [pos=0.2] (f) coordinate [pos=0.6] (h) (-1.2, 1.5);
    \draw (e) edge [bend left=15] (0.4, 1.5);
    \draw (f) edge [bend right=15] (-2.1, 1.5);
    \draw (h) edge [bend left=15] (-0.8, 1.7);
    \draw (0,0) edge[bend right=15] coordinate[pos=0.3] (i) (-1.8,0.7);
    \draw (i) edge [bend left=15] (-1.2, 1);
    \draw (0, -3) -- (0,-1.5);
    \draw (0,-2.7) edge [bend left=15] coordinate [midway] (j) (1.5,-1.7);
    \draw (j) edge [bend right=15] (1.2, -0.9);
    \draw (0, -2.5) edge [bend right=15] (-1.2, -1);
  \end{tikzpicture}
  \caption{proof of Proposition~\ref{pr:treebasic}, how to show that $q(y)\doms p(x)$. In this picture $A$ only contains the point $c$, denoted by a triangle. Solid lines lie in $\monster$, and dotted lines lie in a bigger $\monster_1\satext \monster$.}
  \label{figure:somedomination}
\end{figure}

  In Proposition~\ref{pr:treebasic}, it is important to work with $\domeq$, as opposed to the finer relation $\equidom$ of \emph{equidominance}, obtained by requiring that domination of $q$ by $p$ and of $p$ by $q$ can be witnessed by the same $r$. While using some $r$  containing\footnote{Here,  the domain $A$ of $r$ has to be large enough for $p,q$ to be $A$-invariant. Using $q(y)\cup\set{x\meet c=y}$ alone is not  enough to show $x\ne c$, and if $\set{a\in \monster \mid p\proves x\meet c<a}$ does not have a minimum then no single formula is enough to show $q\doms p$.} $x\meet c=y$ would still work to show that every type of kind (IIIa) is  equidominant to one of type (Ia),  this would not work for kind (IIIb), as shown below.
\begin{rem}
  Let $p(x)$ and $q(y)$ be the types respectively of kind (IIIb) and (Ib) with cut $\emptyset$. Then $p\nequidom q$.
\end{rem}
\begin{proof}
  Suppose that $r(x,y)$ witnesses equidominance. If $r(x,y)\proves x\meet y<y$, then $p(x)\cup r(x,y)\centernot \proves q(y)$, since by quantifier elimination and compactness it cannot prove all formulas $y<d$, for $d\in \monster$. If $r(x,y)\proves x\meet y=y$, then $q(y)\cup r(x,y)\centernot\proves p(x)$, since it cannot prove all formulas $x\centernot\le d$.
\end{proof}
\begin{pr}\label{pr:tdmt_types_behaviour}
  In the theory of dense meet-trees the following hold.
  \begin{enumerate}
    \item Types of kind (Ia) and (Ib) are idempotent modulo equidominance. 
  \item If $p$ is of kind (II) and $m<n\in \omega$ then $\pow pm\ndoms \pow pn$.
  \end{enumerate}
\end{pr}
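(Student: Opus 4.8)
The plan is to handle the two items separately; in each case I would first compute the relevant tensor power explicitly from its defining scheme, and only then discuss domination.

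\emph{Part 1.} Fix $p$ of kind (Ia) or (Ib) and a base of invariance $A$ for it, enlarged so as to contain the distinguished parameters of $p$ (the small chain filling the cut, and the points $a,c,g$ occurring in the degenerate cases of kind (Ib)). First I would compute $\pow p2=p(x_0)\otimes p(x_1)$ by unwinding the definitions of $\otimes$ and of the $A$-invariant extension: one obtains $\pow p2=p(x_0)\cup p(x_1)\cup\{\sigma(x_0,x_1)\}$ where $\sigma$ is a single order relation squeezing the inner copy $x_1$ strictly between the cut and the outer copy $x_0$ (so $x_1<x_0$ when $p$ is of kind (Ia)), completeness of the displayed partial type being immediate from quantifier elimination since it determines the atomic type of $\{x_0,x_1\}\cup\monster$ under $\meet$. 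I would then witness $\pow p2\equidom p$ by a \emph{single} $r$ over $A$, taken to be (a completion of) the type identifying $x$ with the inner copy, i.e.\ $x=x_1$, together with the order inequalities constraining $x_0$ relative to $x$ and to the distinguished parameters (e.g.\ $\{x<x_0\}\cup\{x_0<a\mid a\in A\}$ when $p$ is of kind (Ia)). That $\pow p2\cup r\proves p(x)$ is immediate from $x=x_1$ and $\pow p2\proves p(x_1)$. Conversely, $p(x)\cup r$ yields $p(x_1)$ directly and $\sigma(x_0,x_1)$ from the position of $x_1=x$ relative to $x_0$; and — this is the crux — from $x\models p$ together with those inequalities one reads off that the cut of $x_0$ over $\monster$ is squeezed to be exactly $C_p$ (and, in the degenerate subcases, that $x_0$ lies in the prescribed open cone above the maximum of the cut), whence $\tp(x_0/\monster)=p$ and so $p(x)\cup r\proves\pow p2(x_0,x_1)$.

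\emph{Part 2.} Fix $p$ of kind (II) with distinguished point $g$; note $g\in\dcl(A)$ for every base $A$ of $p$. First I would record that $\pow pn=\bigcup_{i<n}p(x_i)\cup\{x_i\meet x_j=g\mid i\ne j\}$, so that $\pow pn$ concentrates on $n$-tuples lying in $n$ pairwise distinct open cones above $g$, each new over $\monster$; completeness again follows from quantifier elimination. Suppose for contradiction that some $r$ over a small $A$, with $g\in A$ and $p$ being $A$-invariant, witnesses $\pow pm\doms\pow pn$. Realise $\bar a\bar b$ with $\tp(\bar a/\monster)=\pow pm$ and $\tp(\bar a\bar b/A)=r$; by domination $\tp(\bar b/\monster)=\pow pn$. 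Since the coordinates of $\bar b$ lie in $n$ distinct cones above $g$ while each $a_i$ lies in a single such cone, at most $m$ of the $b_j$ can share a cone with some $a_i$; as $m<n$, after relabelling $b_0$ lies in a cone above $g$ that is new relative to the whole finite set $A\bar a(\bar b\setminus b_0)$ — newness over $\monster\supseteq A$ is part of $\pow pn$, newness over the other $b_j$ is the clause $x_0\meet x_j=g$, and newness over the $a_i$ is the pigeonhole choice. By $\kappa$-saturation pick $c_0\in\monster$ realising $p\restr A$; then $c_0$ too lies in a cone above $g$ new relative to $A\bar a(\bar b\setminus b_0)$, using that the $a_i$ and $b_j$ themselves lie in cones new over $\monster$ so that their meets with $c_0\in\monster$ are forced to be $g$. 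Hence $\tp(c_0/A\bar a(\bar b\setminus b_0))=\tp(b_0/A\bar a(\bar b\setminus b_0))$, so $\bar b'\coloneqq(c_0,b_1,\dots,b_{n-1})$ satisfies $\tp(\bar a\bar b'/A)=r$, and therefore $\bar a\bar b'$ realises $\pow pm(\bar x)\cup r(\bar x,\bar y)$ and, by domination, $\tp(\bar b'/\monster)=\pow pn$ — contradicting the fact that the first coordinate of $\bar b'$ is $c_0\in\monster$, while $\pow pn$ forces its first coordinate to realise $p$, a type of kind (II) hence not realised in $\monster$.

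The step I expect to be the main obstacle is the crux of Part 1: verifying that the single witness $r$ really forces $\tp(x_0/\monster)=p$, i.e.\ that squeezing $x_0$ between a realisation of $p$ and the small chain $A$ pins down its cut to be precisely $C_p$ (and pins down the relevant open cone in the degenerate subcases of kind (Ib)). This is exactly where the special shape of kinds (Ia) and (Ib) enters — the cut being filled, coinitially or cofinally, inside $\monster$ by the small chain — and where one must be somewhat careful to treat the subcases of (Ib) uniformly. By contrast, Part 2 is a clean pigeonhole-and-saturation argument once the shape of $\pow pn$ is available.
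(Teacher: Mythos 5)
Your proposal is correct and follows essentially the same route as the paper. For Part 1 the paper's proof is precisely your identification of the realised variable with the first-realised (``inner'') copy of the product, with quantifier elimination doing the rest; for Part 2 the paper runs the same smallness/pigeonhole argument, written out only for $m=1$, $n=2$ (``since $r$ is small there is $d>g$ in $\monster$ such that $p(y)\cup r\centernot\proves x_i\meet d=g$''), and your explicit coordinate swap of $b_0$ with a realisation $c_0\in\monster$ of $p\restr A$ is a clean way of making that step precise in the general case. One correction to your description of $p(x_0)\otimes p(x_1)$ in Part 1: for kind (Ib) the later-realised copy $x_0$ is the one pushed towards the cut, i.e.\ $x_0<x_1$, not $x_1<x_0$, so your blanket statement that $\sigma$ squeezes the inner copy between the cut and the outer copy is right only for kind (Ia); had you written the (Ib) witness according to that picture, $r$ would be inconsistent with $p^{(2)}\restr A$. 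The uniform picture --- which is in fact the one your final ``crux'' paragraph uses --- is that the outer copy $x_0$ lies strictly between the distinguished small set and the inner copy, so the witness should contain $x=x_1$ together with ``$x_0$ between $A$ and $x$'' (namely $x<x_0<A$ for (Ia), $A<x_0<x$ for non-degenerate (Ib), and $g<x_0<x$ in the degenerate subcase); with that adjustment your verification that $p(x)\cup r$ pins down $\tp(x_0/\monster)=p$ goes through exactly as you describe.
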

\begin{proof}\*
  \bigcircled 1   Let $A$ be such that $p$ is $A$-invariant. It follows easily from quantifier elimination that in order to show $p(x_1)\otimes p(x_0)\equidom p(y)$ it is enough to put in $r\in S_{pq}(A)$  the formula $x_0=y$.

  \bigcircled 2 For notational simplicity we show the case $m=1$, $n=2$, the general case being analogous. Suppose that $p$ is the type of an element in a new open cone above $g$, i.e.~$p(y)\proves \set{y>g}\cup\set{y\meet b=g\mid b\in \monster, b>g}$. We want to show that there is no small $r(y,x_0,x_1)$ such that  $p(y)\cup r\proves p(x_1)\otimes p(x_0)$. Since $\pow p2\restr \set{g}$ proves $x_0\meet x_1=g$, i.e.~that the cones of $x_0$ and $x_1$ are distinct, there is $i<2$  such that $r\proves y\meet x_i=g$. Since $r$ is small there is $d>g$ in $\monster$ such that  $p(y)\cup r\centernot \proves x_i\meet d= g$; in other words it is not possible, with a small type, to say that $x_i$ is in a new open cone, unless it is the same cone as $y$, but $y$ cannot be in the open cones of $x_0$ and $x_1$ simultaneously.
\end{proof}

Since by Theorem~\ref{thm:expdmt}   dense meet-trees are weakly binary,  $\invtilde$ is well-defined by Fact~\ref{fact:wbinwd}.  By the results above, (domination-equivalence classes of) $1$-types of kind (II) generate a copy of $\mathbb N$, while all other (classes of) $1$-types are idempotent.  We have also seen that if $p,q$ are nonrealised $1$-types, then either $p\wort q$ or $p\domeq q$. In particular, all pairs of $1$-types commute modulo domination-equivalence. To complete our study we need one last ingredient.
\begin{pr}\label{pr:tdmtgen}
  In $\tdmt$, every invariant type is domination-equivalent to a product of invariant $1$-types.
\end{pr}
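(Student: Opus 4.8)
The plan is to proceed by induction on the length of the tuple of variables, extracting one coordinate at a time and showing that it can be "moved" into a product decomposition. So suppose $p(x)\in\invtypes(\monster)$ with $x=x'x_n$ and $|x'|=n$; let $p'(x')$ be the restriction of $p$ to $x'$, which is invariant, and let $p_n(x_n)$ be the restriction to the last coordinate, which is an invariant $1$-type, hence by the characterisation of invariant $1$-types is of one of the six kinds. By induction $p'$ is domination-equivalent to a product of invariant $1$-types, so it suffices to show that $p\domeq p'\otimes p_n$ — or, more precisely, that $p$ is domination-equivalent to $p'(x')\otimes q(x_n)$ for a suitable invariant $1$-type $q$; in most cases $q=p_n$ will work, but when $p_n$ is of kind (IIIa) or (IIIb) one should first replace $p_n$ (using Proposition~\ref{pr:treebasic}(3)) by the domination-equivalent $1$-type of kind (Ia) resp.\ (Ib), and remember that the extra data needed to pass between them (the element $c$ with $x_n\meet c$ realising the lower kind) is definable from $x_n$ over a small base, so domination-equivalence with $p$ is preserved. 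Throughout I would fix a small $A$ with $p\in\invtypes(\monster,A)$ and such that all the relevant parameters (the $c$'s above, and bases for the cuts involved) lie in $A$.

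The direction $p\doms p'\otimes p_n$ is easy and essentially automatic: since $p$ restricted to $x'$ is $p'$ and restricted to $x_n$ is $p_n$, and these two restrictions are weakly orthogonal or domination-equivalent by the last clause of Proposition~\ref{pr:treebasic}, one checks that $p$ itself dominates both $p'$ and $p_n$, and a type dominates the tensor product of two types each of which it dominates (this is a standard fact about $\otimes$ and $\doms$ in $\mathsf{NIP}$ theories, following from Fact~\ref{fact:wortnipinv} together with the behaviour of $\otimes$ with respect to domination recorded in the excerpt). The substantive direction is $p'\otimes p_n\doms p$: one must produce a small $r(x',x_n)$ with $(p'\otimes p_n)\cup r\proves p$. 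By weak binarity (Theorem~\ref{thm:expdmt}) and Fact~\ref{fact:wbinwd}, and by quantifier elimination in $\tdmt$, it is enough for $r$ to pin down, over some small $A$, all the atomic $\lmt$-data between $x_n$ and $x'$ and $\monster$ — that is, all the meets $x_n\meet x'_i$ and $x_n\meet m$ for $m\in\monster$ — modulo what $p$ says about $x'$ and $x_n$ separately. Here I would invoke Lemma~\ref{lemma:meetclosedmodulofinite3} applied to $b\coloneqq x_n x'$: it supplies a finite $d$ with $\monster bd$ meet-closed, a finite $c=d\cap\monster\subseteq A$, and the key fact that each new point $e$ in $bd$ of bounded cut has a "companion" $a_e\in c$ above its cut, in the same open cone when relevant. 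The witness $r$ then records $\tp(x_n x'/c)$ together with, for each coordinate $x'_i$ that is parallel to $x_n$ and shares information with it, the position of $x_n\meet x'_i$ relative to the companions $a_e$; by the analysis in the proof of Theorem~\ref{thm:expdmt} (the three-case breakdown of where $e\meet f$ sits relative to $C^\monster_e$), this data is exactly enough to reconstruct $x_n\meet m$ for every $m\in\monster$, hence all of $\tp^{\lmt}(x_n x'/\monster)$, and since $\tdmt$ is pure (no $R_j$, $P_{j'}$) quantifier elimination then gives $\tp(x/\monster)=p$.

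The main obstacle I expect is the bookkeeping when $x_n$ "branches off" one of the coordinates $x'_i$ — i.e.\ when $x_n\meet x'_i$ lies strictly above $C^\monster_{x_n}$, or when $x_n$ and some $x'_i$ are in a common new open cone above a point $g\in\monster$ with $g=C^\monster_{x_n}=C^\monster_{x'_i}$. In that situation the cone of $x_n$ over $\monster$ is not directly visible from $\monster$ at all; it is only visible through $x'_i$, and one has to make sure that the small type $r$ nonetheless transfers enough of $p'$'s description of $x'_i$'s cone to determine $x_n$'s, without $r$ being forced to be large (the impossibility of a small type "naming" a genuinely new cone is exactly the phenomenon exploited in Proposition~\ref{pr:tdmt_types_behaviour}(2)). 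The resolution is that $r$ need not name the cone: it only needs to assert $x_n\meet x'_i > g$ (a single formula over $A\ni g$), and then $p'$ already carries, as part of $\tp(x'/\monster)$, the position of $x'_i$'s cone relative to every $m\in\monster$ — so $(p'\otimes p_n)\cup r$ inherits it for $x_n$ for free. Making this precise case-by-case, mirroring the case analysis of Lemma~\ref{lemma:meetclosedmodulofinite3} and Theorem~\ref{thm:expdmt}, is where the work lies, but no new idea beyond those lemmas is needed.
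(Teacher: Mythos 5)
Your central inductive step is false as stated. Take $p(x_0,x_1)$ to be the type of two elements $x_0<x_1$ lying in the \emph{same} new open cone above some $g\in\monster$; both $1$-subtypes of $p$ equal the kind (II) type $q$ above $g$, so your step asserts $p\domeq q\otimes q$. But a realisation of $p$ spans only one new cone above $g$, and exactly the argument of the second part of Proposition~\ref{pr:tdmt_types_behaviour} shows $p\ndoms q\otimes q$: since $q\otimes q$ proves that its two variables lie in distinct new cones, a small $r$ can tie at most one of them to the unique new cone provided by the realisation of $p$, and no small type can assert of the other variable that it avoids every open cone represented in $\monster$. (In fact $p\domeq q$, with a \emph{single} factor.) The same example refutes the ``standard fact'' you invoke for the direction you call easy: a type dominating each of two types need not dominate their tensor product --- this failure is precisely why kind (II) types generate copies of $\mathbb N$ instead of being idempotent, and why well-definedness of $\otimes$ on the quotient is a nontrivial issue in the first place. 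Your ``main obstacle'' paragraph only treats the direction $p'\otimes p_n\doms p$, which is not the direction that breaks.

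The paper does not induct on coordinates. It first splits $p$ into weakly orthogonal blocks of coordinates sharing a cut (Fact~\ref{fact:wortnipinv} together with Proposition~\ref{pr:treebasic}); a block whose cut has no maximum, or which lies in an existing open cone, is shown to be domination-equivalent to a \emph{single} $1$-type of kind (Ia)/(Ib), using the finitely many definable functions $h_i(x)$ lying below a fixed $d>C_p$; and a block lying in new open cones above $g$ is shown equivalent to $\pow q{\abs{H}}$, where $H$ is the set of minimal elements of the definable closure of the block outside $\monster$. That is, the number of kind (II) tensor factors is the number of distinct new cones (minimal new points) spanned by the whole tuple, not the number of coordinates --- this is the bookkeeping your coordinate-by-coordinate decomposition cannot see. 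A repair of your induction would have to incorporate this count, e.g.\ by tensoring with a realised type instead of $p_n$ whenever the last coordinate creates no new minimal point over the earlier ones, and then proving $p'\doms p$ in that case; at that point you are essentially redoing the paper's block analysis.
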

\begin{proof}
By  Fact~\ref{fact:wortnipinv} and Proposition~\ref{pr:treebasic} we  reduce to showing the conclusion for types $p(x)$ consisting of elements all with the same cut $C_p$.

  Assume first that $C_p$ does not have a maximum, let $c\models p(x)$ and let $d\in \monster$  be such that $d>C_p$, which exists by the last point of Lemma~\ref{lemma:cutremarks3}. Let $H=\set{h_0(c),\ldots, h_n(c)}$ be the (finite, by Remark~\ref{rem:meetfacts})  set of points in $\dcl(cd)$ such that  $d>h_i(c)$, where each $h_i(x)$ is a $\set d$-definable function. By semilinearity $H$ is linearly ordered; suppose, up to reindexing, that $h_0(c)=\min H$ and $h_n(c)=\max H$. We have two subcases. If $C_p$ has small cofinality, let $q(y)$ be of kind (Ib) with $C_q=C_p$. Let $A$ be such that  $p,q\in \invtypes(\monster, A)$, let $r(x,y)\in S_{pq}(Ad)$ contain the formula $h_n(x)<y$, and note that $q(y)\cup r(x,y)$ implies the type over $\monster$ of each point of $\dcl(xd)$, i.e.~of the closure of $xd$ under meets. It follows from quantifier elimination that $q\cup r\proves p$. To prove  $p\cup r\proves q$, use instead some   $r$ containing the formula $y<h_0(x)$. In the other subcase, $\set{e\in \monster\mid C_p<e<d}$ has small coinitiality. The argument is  analogous, except we  use an $r$ containing $h_0(x)>y$ to show $q\cup r\proves p$ and one containing $h_n(x)<y$ to show $p\cup r\proves q$.

  Suppose now that $C_p$ has maximum $g$. Assume without loss of generality that $\bla c0,{k-1}$ are the points of $c$ such that there is $d_i\in \monster$ with  $d_i \meet c_i>g$. In other words, these are the points in  existing open cones above $g$, and $\bla ck,{\abs c-1}$ are in new open cones. Again by quantifier elimination, we have $\tp(\bla c0,{k-1}/\monster)\wort \tp(\bla ck,{\abs c-1}/\monster)$, so we can deal with the two subtypes separately. Similarly, by using weak orthogonality we may split $c_{<k}$ further, and we may assume that for $i<\ell$, say, all $c_i$ are in the same open cone, say that of the point $d\in \monster$. It is now enough to proceed as in the previous case, by taking $q(y)$ to be the type of kind (Ib) with the same cut and open cone above $g$. As for $\bla ck,{\abs c-1}$, let $H$ be the set of minimal elements of $\dcl(\bla ck,{\abs c-1})\setminus \monster$. Let $q(y)$ be the type of kind (II) above $g$. To conclude, let $r$ identify elements of $H$ with coordinates of a realisation of $\pow q{\abs H}$.
\end{proof}
The previous results yield the following characterisation of $\invtilde$ in $\tdmt$.
\begin{thm}\label{thm:dmt}
  In dense meet-trees, $\invtilde\cong\pfin(X)\oplus\bigoplus_{g\in \monster} \mathbb N$. 
 Generators of copies of $\mathbb N$ correspond to types of elements in a new open cone above a point $g\in \monster$, i.e.~to types of kind (II), while each point of $X$ corresponds to, either:
\begin{enumerate}
\item a linearly ordered subset of $\monster$ with small coinitiality, modulo mutual coinitiality; this corresponds to types of kind (Ia)/(IIIa);
\item a cut with no maximum, but with small cofinality; this  corresponds to some types of kind (Ib)/(IIIb);
\item an existing open cone above an existing point; this corresponds to the rest of the types of kind (Ib)/(IIIb).
\end{enumerate}
\end{thm}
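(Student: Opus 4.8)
The plan is to exhibit an explicit isomorphism of (ordered) monoids $\Phi\colon\pfin(X)\oplus\bigoplus_{g\in\monster}\mathbb N\to\invtilde$. Fix generators: for $g\in\monster$ let $p_g$ be the unique $1$-type of kind (II) above $g$, and for $\xi\in X$ let $q_\xi$ be a $1$-type of kind (Ia) or (Ib) realising the cut — and, in the third case of the statement, the open cone — named by $\xi$. Put $\Phi(S,(n_g)_g)=\bigl[\,\bigotimes_{\xi\in S}q_\xi\otimes\bigotimes_g p_g^{(n_g)}\,\bigr]$. First I would record that $\invtilde$ is commutative: it is generated by classes of invariant $1$-types by Proposition~\ref{pr:tdmtgen}, and any two such classes commute (weakly orthogonal ones trivially; domination-equivalent ones because $\otimes$ respects $\domeq$ by Fact~\ref{fact:wbinwd}). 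Together with idempotency of the $q_\xi$ modulo domination (Proposition~\ref{pr:tdmt_types_behaviour}(1)) and the identity $p_g^{(m)}\otimes p_g^{(n)}=p_g^{(m+n)}$, this makes $\Phi$ a well-defined monoid homomorphism.

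Surjectivity is bookkeeping with what is already proved. By Proposition~\ref{pr:tdmtgen} every invariant type is domination-equivalent to a product of invariant $1$-types; a nonrealised invariant $1$-type is of one of the six kinds, kinds (IIIa) and (IIIb) are domination-equivalent to types of kind (Ia) resp.\ (Ib) by Proposition~\ref{pr:treebasic}(3), kinds (Ia) and (Ib) are the $q_\xi$, and kind (II) above $g$ is $p_g$; commutativity and idempotency then rewrite any such product as $\Phi(S,\bar n)$.

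The substance is showing $\Phi$ reflects $\doms$ (it clearly preserves it, by Example~\ref{eg:pushf}): this yields both injectivity, since $\doms$ is a partial order on $\invtilde$, and the order part of the isomorphism. So suppose $\Phi(S,\bar n)\doms\Phi(S',\bar n')$, with canonical representatives $P$ and $P'$. I would first check that the generators are pairwise weakly orthogonal — distinct $q_\xi,q_{\xi'}$ by Proposition~\ref{pr:treebasic}(1)--(2) (distinct cuts, or a common cut with a maximum but distinct open cones), and $q_\xi\wort p_g$ by distinctness of cuts or, if $\xi$ names a cone above $g$, by the first alternative of Proposition~\ref{pr:treebasic}(2) — and that the pieces $q_\xi$ and $p_g^{(n_g)}$ of $P$ are pairwise weakly orthogonal, by quantifier elimination as in the proof of Proposition~\ref{pr:tdmtgen}; hence by Fact~\ref{fact:wortnipinv}, $P$ is the union of these pieces. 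Now $P'$ dominates each of its factors (Example~\ref{eg:pushf}), so $P$ does too. If some $\xi\in S'$ had $\xi\notin S$, then $P\wort q_\xi$ while $P\doms q_\xi$, forcing $q_\xi$ realised — impossible; so $S'\subseteq S$. And if $n'_g>n_g$ for some $g$, then $P\doms p_g^{(n_g+1)}$, which I claim is impossible: this generalises Proposition~\ref{pr:tdmt_types_behaviour}(2), because in a realisation of $P$ exactly $n_g$ coordinates lie in a new open cone above $g$, a small witness can force each of the $n_g+1$ variables of $p_g^{(n_g+1)}$ into a new open cone above $g$ only by (transitively) identifying its cone with that of one of those $n_g$ coordinates, and pigeonhole then forces two of the variables into the same cone, contradicting $p_g^{(n_g+1)}\restr\{g\}$. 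Thus $S'\subseteq S$ and $\bar n'\le\bar n$, as required; the "moreover" clause is read off from $\Phi$.

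The main obstacle is this last step — pushing the argument of Proposition~\ref{pr:tdmt_types_behaviour}(2) through an arbitrary product. The delicate points are that no coordinate coming from a $q_\xi$ (in particular one attached to an \emph{existing} open cone above $g$) secretly lies in a \emph{new} open cone above $g$, and that a small type cannot certify membership in a new open cone above $g$ from scratch, only by anchoring to an element already known to lie in one. Everything else is assembled from Propositions~\ref{pr:treebasic}, \ref{pr:tdmt_types_behaviour} and~\ref{pr:tdmtgen} together with Facts~\ref{fact:wbinwd} and~\ref{fact:wortnipinv}.
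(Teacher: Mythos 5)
Your proposal is correct and takes essentially the paper's route: the paper gives no separate argument for this theorem, simply assembling Propositions~\ref{pr:treebasic}, \ref{pr:tdmt_types_behaviour} and~\ref{pr:tdmtgen} with Facts~\ref{fact:wbinwd} and~\ref{fact:wortnipinv}, which is exactly what your explicit isomorphism does. The one point where you go beyond the text --- extending the argument of Proposition~\ref{pr:tdmt_types_behaviour} from pure powers $\pow{p}{m}\ndoms\pow{p}{n}$ to $P\ndoms\pow{p_g}{n_g+1}$ for a mixed product $P$, via the anchoring-and-pigeonhole argument --- is a step the paper leaves implicit, and your treatment of it (a small type can only certify a new open cone above $g$ by tying it to a cone already present in a realisation of $P$) is the correct generalisation of the paper's own proof of that proposition.
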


\section{The domination monoid: expansions}\label{sec:invtildeexp}
In this section we  generalise Theorem~\ref{thm:dmt} to purely binary cone-expansions of $\tdmt$, such as $\mathsf{DTR}$, by replacing the direct summands isomorphic to $\mathbb N$ with the domination monoids of the structures induced on sets of open cones. In  $\tdmt$ these are pure sets, which are easily seen to  have domination monoid isomorphic to $\mathbb N$ and generated by the $\domeq$-class of the unique nonrealised $1$-type.

Before restricting our attention to purely binary cone-expansions, we observe a phenomenon which can arise in the presence of unary predicates.
Suppose  for instance that $L=\lmt\cup \set P$, where $P$ is a unary predicate symbol interpreted as a branch of $\monster$, i.e.~a maximal linearly ordered subset. In this case, there is an $\emptyset$-invariant type $p$ with cut $C_p=P(\monster)$, and by the last point of Lemma~\ref{lemma:cutremarks3} $p\restr\lmt$ is not invariant. Another binary cone-expansion of $\tdmt$ where there is an invariant type $p$ such that $p\restr \lmt$ is not invariant can be obtained by taking as $P(\monster)$ a bounded linearly ordered subset with no supremum.  However, using unary predicates is the \emph{only} way to obtain such behaviour in a binary cone-expansion of $\tdmt$, as we are about to show. We refer the reader interested to preservation of invariance under reducts to~\cite{rideausimon}.

Denote by $G_g$ the \emph{closed cone} above $g$, namely $\set{b\in \monster \mid b\ge g}$.
\begin{defin}\label{defin:tamefla}
Let $T$ be an expansion of $\tdmt$. We call a formula  $\phi(x)$ with $\abs x=1$ \emph{tame} iff it has the following property: there is a finite set $D\subseteq \monster$ such that,    for every $a\in \phi(\monster)$, either there is $d\in D$ such that $a\le d$, or  $G_a\subseteq \phi(\monster)$.
\end{defin}
\begin{pr}\label{pr:nounary}
If $T$ is a purely binary cone-expansion of $\tdmt$, then every formula in one free variable is tame.
\end{pr}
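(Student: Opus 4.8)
The plan is to reduce to quantifier-free $\phi$ via the quantifier elimination in Definition~\ref{defin:binconexp}(2) (using that $J'=\emptyset$, so $L=\lmt\cup\set{R_j\mid j\in J}$ with every $R_j$ binary), and then to take for $D$ the \emph{whole} definable closure of the parameters. Concretely: if $\phi(x)$ has parameters $\bar e$, it is equivalent to a quantifier-free $L(\bar e)$-formula, whose parameters all lie in $C\coloneqq\dcl^{\lmt}(\bar e)$, a finite set by Remark~\ref{rem:meetfacts}. The claim I would aim to prove is: \emph{if $a,b\in\monster$ with $a\le b$ and $a\not\le c$ for every $c\in C$ (so also $b\not\le c$ for every $c\in C$), then $a$ and $b$ realise the same quantifier-free type over $C$}. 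Granting this, tameness with $D\coloneqq C$ is immediate: if $a\in\phi(\monster)$ and $a$ is below no element of $C$, then every $b\ge a$ is also below no element of $C$, hence has the same quantifier-free type over $C$ as $a$ and therefore satisfies $\phi$; thus $G_a\subseteq\phi(\monster)$.

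To prove the claim I would first note that $a\meet c=b\meet c$ for every $c\in C$: if $c<a$ both meets equal $c$, and if $a\parallel c$ this follows from the basic properties of $\meet$ (here $a\not\le c$ rules out the remaining possibilities, as in Lemma~\ref{lemma:cutremarks3}). Since $C$ is closed under $\meet$, associativity, commutativity and idempotency collapse every term in the single variable $x$ over $C$ to either a constant from $C$, or $x$, or $x\meet c$ for some $c\in C$. I would then run through the atomic $L(C)$-formulas by shape: those built only from the terms $x\meet c$ and constants have the same truth value at $a$ and $b$ because $a\meet c=b\meet c$; $x=c$ and $x<c$ fail at both (as $a,b\not\le c$), while $c<x$ holds at $a$ iff at $b$ (using $a\le b$ together with $a\not\le c$ in both directions); any $R_j$-atomic formula both of whose arguments are $\le x$ (e.g.\ $R_j(x\meet c,x\meet c')$, $R_j(x,x\meet c)$) fails at both points, since such arguments are comparable; $R_j(x\meet c,c')$ and $R_j(c',x\meet c)$ have the same value at $a$ and $b$ again because $a\meet c=b\meet c$; and finally for $R_j(x,c)$ (symmetrically $R_j(c,x)$): if $c<a$ then $c<b$ and both fail, whereas if $a\parallel c$ then $b\parallel c$, and Definition~\ref{defin:binconexp}(3)(b), applied with $x\mapsto a$, $x'\mapsto b$, $y\mapsto y'\mapsto c$ (valid since $a\meet b=a>a\meet c$ and $c>a\meet c$), yields $R_j(a,c)\coimplica R_j(b,c)$. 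This exhausts the atomic formulas, so $a$ and $b$ have the same atomic, hence quantifier-free, type over $C$.

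The substance of the argument is entirely in the last clause: it is precisely the ``on open cones'' condition of Definition~\ref{defin:binconexp}(3)(b) that makes $R_j(x,c)$ constant as $x$ moves upward inside a fixed open cone above $a\meet c$, and this is where the hypothesis that $T$ is a \emph{purely} binary cone-expansion — rather than an arbitrary binary expansion allowing unary predicates, which can genuinely destroy tameness, as the branch-predicate example preceding the statement shows — is used. I expect the only real obstacle to be bookkeeping: verifying that the list of term-shapes and atomic-formula shapes above is exhaustive once $C$ is $\meet$-closed, and keeping the case split for $R_j$-atomic formulas tidy; the rest is routine given the identities for $\meet$ and the computation $a\meet c=b\meet c$.
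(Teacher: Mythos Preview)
Your proof is correct, and the core idea is the same as the paper's: both use quantifier elimination, the collapse of $\lmt$-terms to constants, $x$, or $x\meet c$, the fact that $a\meet c=b\meet c$ whenever $a\le b$ and $a\not\le c$, and the ``on open cones'' clause to handle $R_j(x,c)$. The organisation, however, is genuinely different. The paper proceeds bottom-up: it verifies tameness for each atomic and negated atomic shape with its own witness set $D$, and then observes that tame formulas are closed under conjunction and disjunction by taking the union of witnesses. You instead fix a single uniform $D=\dcl^{\lmt}(\bar e)$ up front and prove the stronger statement that any $a\le b$ with $a$ not below any point of $D$ have the same quantifier-free type over $D$; tameness of every quantifier-free $L(D)$-formula then falls out at once. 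Your route is arguably cleaner---no separate closure-under-Boolean-combinations step, and no need to treat negated atomics explicitly---while the paper's route makes more visible that each atomic formula is tame on its own, with a minimal witness set. Both are short, and the underlying computations are identical.
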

\begin{proof}
  It is clear that every atomic and negated atomic $\phi(x)\in \lmt(\monster)$ is tame.
Fix a point $c$, and consider  $\phi(x)\coloneqq R_j(x,c)$; if $a\in \phi(\monster)$, by assumption we also have $\phi(b)$ for every $b>a$, hence $G_a\subseteq \phi(\monster)$. Consider now $\phi(x)\coloneqq \neg R_j(x,c)$, and let $D=\set c$. Suppose that $a\centernot \le c$. If  $a\parallel c$ and $\phi(a)$ holds, we can argue as above, so assume that $a>c$. For any $b\ge a$ we have in particular  $b> c$, hence $\phi(b)$ holds by assumption and  $G_a\subseteq \phi(\monster)$; therefore  $\neg R_j(x,c)$ is tame. The formula $R_j(x,x\meet c)$ and its negation are tame, because $R_j(x,x\meet c)$ is always false. As for the formula $\phi(x)\coloneqq R_j(x\meet c_0, x\meet c_1)$, take $D=\set{c_0,c_1}$. If $a\centernot \le c_0\land  a\centernot \le c_1$, then for every $b>a$ and $i<2$ we have $a\meet c_i=b\meet c_i$, hence $b>a\implica (\phi(a)\coimplica \phi(b))$, proving tameness of both $\phi(x)$ and $\neg \phi(x)$.   Since the same arguments apply to $R_j(c,x)$, $R_j(x\meet c,x)$, and their negations, we conclude that  all atomic and negated atomic formulas are tame.

  Tame formulas in the variable $x$ are easily seen to be closed under conjunctions and disjunctions: if $D_\phi$ and $D_\psi$  witness tameness of $\phi(x)$ and $\psi(x)$ respectively, then $D_\phi\cup D_\psi$ witnesses tameness of both $\phi(x)\land \psi(x)$ and $\phi(x)\lor \psi(x)$. By quantifier elimination, we have the conclusion.
\end{proof}

\begin{co}\label{co:restrinv}
  If $T$ is a purely binary cone-expansion of $\tdmt$ and $p\in \invtypes(\monster)$, then $(p\restr \lmt)\in \invtypes(\monster\restr \lmt)$. 
\end{co}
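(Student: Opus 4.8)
The plan is to reduce first to $1$-types, then to a statement about invariant ``cuts'' in the reduct $\monster\restr\lmt$, and finally to deduce that statement from Proposition~\ref{pr:nounary}.

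Since $\tdmt$ is weakly binary by Theorem~\ref{thm:expdmt} (it is the trivial binary cone-expansion of itself), Remark~\ref{rem:invariantdim1} applied in $\tdmt$ reduces the problem to $1$-types: it is enough to show that each $1$-subtype of $p\restr\lmt$ is invariant, and such a $1$-subtype is $q\restr\lmt$ for some $1$-subtype $q$ of $p$, which is itself $A$-invariant in $T$ over any small $A$ over which $p$ is invariant. So I may assume $\abs x=1$ and am left to prove: if $q\in\invtypes_1(\monster)$ in $T$ then $q\restr\lmt\in\invtypes_1(\monster\restr\lmt)$. For this I would use the classification of invariant $1$-types of $\tdmt$ recalled in Section~\ref{sec:prelim}: it suffices to place $q\restr\lmt$ into one of the kinds (0), (Ia), (Ib), (II), (IIIa), (IIIb). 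Enlarging $A$ (keeping it small), I may assume $A=\dcl^{\lmt}(A)$ and, in the case that some $x\meet c_0\notin\monster$, that such a $c_0$ is in $A$. Unwinding what the kinds demand, the task then comes down to showing that the cut $C\coloneqq\set{c\in\monster\mid q\proves x\ge c}$ — and, when $q\proves x\not\le c$ for all $c$, the cut $C_{x\meet c_0}$ of $x\meet c_0$ for that $c_0$ — is bounded in $\monster$ and is not ``bad'', i.e.\ has a maximum, or small cofinality, or a complement of small coinitiality inside its branch. Since $x\ge z$ and $x\meet c_0\ge z$ are $\lmt$-formulas and $q$ is $A$-invariant in $T$, these are downward-closed, linearly ordered subsets of $\monster$ that are fixed setwise by $\aut(\monster/A)$; so the corollary follows from the claim: \emph{in a purely binary cone-expansion of $\tdmt$, a downward-closed, linearly ordered $S\subseteq\monster$ that is fixed setwise by $\aut(\monster/A)$ for some small $A=\dcl^{\lmt}(A)$ is bounded and not bad}.

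To prove this claim I would argue by contradiction, and this step is where Proposition~\ref{pr:nounary} — equivalently, the absence of the unary predicates $P_{j'}$ — is used, and is the main obstacle: one cannot simply invoke the $\mathsf{NIP}$ theory of invariant types, since a purely binary cone-expansion such as $\mathsf{DTR}$ need not be $\mathsf{NIP}$. Suppose $S$ is unbounded or bad; then $S$ is not small (by saturation, or because a bad cut has large cofinality), so it is not contained in the small set $\dcl^L(A)$, and I may fix $b\in S$, chosen near the top of $S$, with $b\notin\dcl^L(A)$, so that $\tp^L(b/A)$ has several realisations. The key point, using that each $R_j$ is on open cones, is that $\tp^L(b/A)$ is determined by the $\lmt$-type of $b$ over $A$ together with the values of the atomic formulas $R_j(b,a),R_j(a,b)$ for $a\in A$, and each such value depends only on the open cone of $b$ above the point $b\meet a\in\monster$. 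One then exhibits a realisation $b'$ of $\tp^L(b/A)$ lying \emph{outside} $S$: if $S$ is unbounded, $b$ lies on the branch $B_0$ determined by $S$, and $b'$ is taken to leave $B_0$ above all the branch points $b\meet a$ ($a\in A$), into a cone with the same $R_j$-relations to those $a$ (possible since the $R_j$-structure on cones is sufficiently saturated), giving $b'\notin B_0\supseteq S$; if $S$ is bad, $b'$ is instead obtained by moving $b$ up past the gap while staying below any branch points of $A$ lying above the gap, which is possible precisely because the gap has large cofinality and large coinitiality of its complement. Either way the $\aut(\monster/A)$-orbit of $b$ is not contained in $S$, a contradiction. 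It is exactly here that ``purely binary'' is essential: a unary predicate naming $B_0$ (or a bounded linearly ordered set with no supremum) would force $b'\in S$, and would indeed produce an invariant type with non-invariant $\lmt$-reduct, as in the examples preceding Definition~\ref{defin:tamefla}. With the claim in hand I would reassemble: if $q\proves x\le c$ for some $c$ then $q\restr\lmt$ is a cut-type of kind (0)/(Ia)/(Ib); otherwise, applying the claim to the cuts $C_{x\meet c_0}$ places $q\restr\lmt$ into kind (II) when every $x\meet c_0$ lies in $\monster$ and into kind (IIIa)/(IIIb) otherwise.
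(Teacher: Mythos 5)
Your overall route is correct, and the first step (reduction to $1$-types via Remark~\ref{rem:invariantdim1} and weak binarity of $\tdmt$) coincides with the paper's; but from that point on you take a genuinely different path. The paper first extends the tameness of Proposition~\ref{pr:nounary} from formulas to small disjunctions of small types, deduces from $A$-invariance that the cut $C_q$ (a union of realisation sets of types over $A$) is bounded, and then concludes by inspecting the possible defining schemes $d_q\phi$ for $\lmt$-formulas. You instead never actually use Proposition~\ref{pr:nounary} (despite announcing you would): you reprove the relevant genericity by hand, via an orbit argument showing that an $\aut(\monster/A)$-invariant, downward-closed chain cannot be unbounded or ``bad'', by producing $b'\equiv^L_A b$ outside the chain. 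This is a legitimate alternative: it makes explicit the case analysis the paper compresses into ``by inspecting the possibilities for $d_q\phi$'', and it isolates cleanly where ``no unary predicates'' and ``$R_j$ on open cones'' enter (the $L$-type of a point over a meet-closed small set is controlled by its $\lmt$-type together with its open cones above the branch points $b\meet a$). The cost is that your argument is longer and leaves more to be checked.

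Three loose spots you should tighten, none fatal. First, ``the cut is bounded and not bad'' is not literally the same as ``$q\restr\lmt$ is of one of the six kinds'': the reassembly needs the small auxiliary arguments that, when $x$ is below some $c\in\monster$, a cut with maximum automatically gives kind (Ib), that small cofinality automatically gives ``$x<b$ for all $b$ above the cut'', that ``large coinitiality of the complement'' must be read as failure of the kind-(Ia) condition (equivalently, every small chain above the cut has an element of $\monster$ strictly below it), and that the kind-(IIIa)/(IIIb) descriptions determine complete types. Second, in the unbounded case your appeal to ``saturation of the $R_j$-structure on cones'' is unnecessary (and not obviously available): since $A$ is small and the chain has large cofinality, you may choose $h$ in the chain above all branch points $b\meet a$ and above $A\cap S$, and then \emph{any} $b'\ge h$ incomparable to the chain above $h$ lies in the same open cone as $b$ above every $b\meet a$, so it has the same $R_j$-relations to $A$ for free. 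Third, in the bad case the existence of $b'\in\monster$ strictly above $S$ and strictly below every $a\in A$ with $a>S$ needs the short justification via the small meet-chain $\set{a\meet a_0\mid a\in A,\ a>S}$ and the failure of small coinitiality; you gesture at this but should spell it out, since it is exactly the point where badness is used.
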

\begin{proof}
  Similarly to the final step in the previous proof, we see by taking unions of witnesses that, if $\Phi(x)$ is a small disjunction of small types, then it satisfies the analogue of tameness where we allow $D$ to have size $\abs{\Phi}$. By saturation, if $\Phi(\monster)$ is linearly ordered, it must be bounded.

Suppose now that  $p\in \invtypes(\monster, A)$, and let $q$ be a $1$-subtype of $p$. By invariance, $C_q$ is the set of realisations of a disjunction of $1$-types over $A$. By what we just proved, $C_q$ is bounded.  Keeping this in mind, along with the facts that the $R_j$ are on open cones and that $T$ eliminates quantifiers, we see by inspecting the possibilities for $d_q\phi$, where $\phi(x;w)$ is an $\lmt$-formula with $\abs x=1$,  that $q\restr \lmt$ must be invariant as well. By Remark~\ref{rem:invariantdim1} and Theorem~\ref{thm:expdmt}, this is enough.
\end{proof}

\begin{ass}
From now on, unless we say that $T$ is arbitrary, we work in  a purely binary cone-expansion  $T$ of $\tdmt$, in a language $L=\lmt\cup\set{R_j\mid j\in J}$.
\end{ass}

We saw in Theorem~\ref{thm:dmt} that, in $\tdmt$, domination-equivalence classes of invariant $1$-types correspond to either new open cones above existing points, or to certain cuts in linearly ordered subsets of $\monster$. In what follows, restrictions of invariant $1$-types to $\lmt$, which are still invariant by Corollary~\ref{co:restrinv},  will play a special role; we therefore introduce some terminology for these cones and cuts. 
\begin{defin}
Let $p,q\in \invtypes_1(\monster)$ be nonrealised, and suppose that $(p\restr \lmt)\domeq (q\restr \lmt)$ in $\tdmt$. If these restrictions are of kind (II), in a new open cone above $g\in \monster$, we say that $p,q$ have the same \emph{sprout}, and that each of them \emph{sprouts from $g$}. If the restrictions are of another kind, we say that $p,q$ have the same \emph{graft}.
\end{defin}
So, in Theorem~\ref{thm:dmt}, $X$ corresponds to the set of grafts, and there is a copy of $\mathbb N$ for each sprout. The reason behind the choice of terminology should be clear from Figure~\ref{figure:dmtkindsoftypes}.

\begin{lemma}\label{lemma:stillidempotent}
Let $p(x), q(y)\in \invtypes(\monster)$. Denote by $q\restr i$ the restriction of $q$ to the variable $x_i$, and similarly for $p$. If, for all $i<\abs y$ and $i'<\abs x$, the types $q\restr i$ and $p\restr i'$ have the same graft, then  $p\domeq q$.  
\end{lemma}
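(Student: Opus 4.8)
The statement says: if every 1-subtype of $p(x)$ and every 1-subtype of $q(y)$ share a common graft (i.e. they are grafts, not sprouts, and their $\lmt$-restrictions are all domination-equivalent in $\tdmt$ to a single fixed graft), then $p\domeq q$. The plan is to reduce everything to the single-variable "idempotent" case and then build explicit witnesses. First I would observe that, since all the relevant 1-types have the same graft, their $\lmt$-restrictions are pairwise non-weakly-orthogonal in $\tdmt$ (they are domination-equivalent, hence by the "in particular" clause of Proposition~\ref{pr:treebasic} not weakly orthogonal), so Fact~\ref{fact:wortnipinv} is of no direct help in splitting $p$ or $q$ into smaller pieces; instead I would proceed directly. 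By symmetry of $\doms$ it suffices to show $p\doms q$ (the argument for $q\doms p$ being identical after swapping roles).

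The key step is to produce a small set $A$ with $p,q\in\invtypes(\monster,A)$ and some $r\in S_{pq}(A)$ witnessing $p\doms q$. Since each $q\restr i$ has the same graft as each $p\restr{i'}$, the underlying cut/cone data of all coordinates of $p$ and $q$ is "the same" in $\tdmt$. I would then choose $r$ to identify, $\lmt$-definably, the coordinates of $q$ with appropriate $\dcl^{\lmt}$-terms in the coordinates of $p$ — concretely, in the kind (Ib)-below-a-cone case one puts a formula of the form $h_0(x)>y$ or $y<h_0(x)$ as in the proof of Proposition~\ref{pr:tdmtgen}, and in the pure-cut cases (Ia)/(Ib) one uses the idempotency witness $x_{i'}=y_i$ of Proposition~\ref{pr:tdmt_types_behaviour}(1) coordinatewise. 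So the $\lmt$-part of the domination is handled exactly by the arguments already in Propositions~\ref{pr:treebasic}(3), \ref{pr:tdmt_types_behaviour}(1) and \ref{pr:tdmtgen}, applied coordinate by coordinate, using that grafts are idempotent modulo equidominance in the pure tree.

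The genuinely new work — and the step I expect to be the main obstacle — is controlling the extra relations $R_j$. Unlike sprouts (new cones), where Proposition~\ref{pr:tdmt_types_behaviour}(2) shows the $\mathbb N$-behaviour and relations genuinely matter, a graft sits in an \emph{existing} open cone above $g$ (or in a pure linear cut with no cone data at all), so the $R_j$-type of a coordinate $e$ of $p$ or $q$ relative to $\monster$ is, by the "on open cones" axioms of Definition~\ref{defin:binconexp}, already determined by finitely many parameters together with the $\lmt$-data — essentially by which \emph{existing} cone $e$ lies in. Here I would invoke weak binarity (Theorem~\ref{thm:expdmt}, Corollary~\ref{co:restrinv}, and the analysis in the proof of Theorem~\ref{thm:expdmt}, cases 2 and 3): once $r$ forces the coordinates of $q$ to lie in the same existing cones (above the relevant $g$'s) as fixed coordinates of $p$, and we enlarge $A$ to contain witnesses $a_e\in\monster$ for all those cones, the type $p(x)\cup r(x,y)$ determines every formula $R_j(y_i,b)$, $R_j(b,y_i)$, and $R_j(y_i,y_j)$ for $b\in\monster$, because each such formula is equivalent — by the cone axioms — to one with the $y$'s replaced by their "shadow" points in $\monster$, which $p\cup r$ already decides. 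Thus $p(x)\cup r(x,y)\proves\tp(b^q/\monster)$ where $b^q\models q$, giving $p\doms q$; symmetrically $q\doms p$, hence $p\domeq q$.

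**Main obstacle.** The delicate point is making precise the claim that "a graft lies in an existing cone, so its relations are determined" uniformly across all coordinates: one must handle (i) coordinates whose graft is a pure linear cut (where there are no $R_j$-formulas to worry about beyond the trivial irreflexive ones, since $R_j(x,y)\implica x\parallel y$), and (ii) coordinates whose graft is an existing cone above some $g$, ensuring the chosen $a_e$'s and the $\lmt$-identifications in $r$ are simultaneously consistent and small. This is essentially a careful bookkeeping of the argument in the proof of Theorem~\ref{thm:expdmt} applied to the pair $(b^p, b^q)$ rather than to two independent tuples; the weak binarity theorem does the heavy lifting, but one must verify that the witness $r$ can be chosen to place $q$'s coordinates into the \emph{right} (existing) cones so that "no new cone data is introduced" and hence the relational types over $\monster$ collapse to relational types over $A$.
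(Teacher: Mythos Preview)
Your plan is correct and follows essentially the same approach as the paper. Both proofs proceed by (i) symmetry, reducing to $p\doms q$; (ii) obtaining $\lmt$-domination from the $\tdmt$ results of Section~\ref{sec:invtildedmt}; (iii) extending the witness $r$ and showing that the remaining $R_j$-formulas are automatically decided because grafts sit in \emph{existing} open cones, so the ``on open cones'' axioms reduce each $R_j(y_i,f)$ either to a formula between points of $\monster$ or to one between $y_i$ and a fixed parameter $a\in A$.

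The only noteworthy difference is organisational: rather than rebuilding the $\lmt$-witness coordinate-by-coordinate from Propositions~\ref{pr:treebasic}, \ref{pr:tdmt_types_behaviour}, and~\ref{pr:tdmtgen} as you propose, the paper simply invokes Theorem~\ref{thm:dmt} once to get $(p\restr\lmt)\doms(q\restr\lmt)$, takes \emph{any} witness $r'$ for this, and extends it arbitrarily to $r\in S_{pq}(A)$. This sidesteps your main obstacle entirely: no explicit identifications like $x_{i'}=y_i$ are needed, nor any verification that they are simultaneously consistent. The paper also performs a small preliminary reduction you omit---adjoining to $y$ the finitely many points of $\dcl^{\lmt}(Ay)\setminus\monster$ so that $\dcl^{\lmt}(\monster y)\setminus\monster=y$---which cleans up the case analysis (one then only needs $R_j(y_i,f)$ for $f\in\monster$, not for arbitrary elements of $\dcl^{\lmt}(\monster y)$). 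Your invocation of weak binarity is accurate in spirit but unnecessary in practice: the three-case analysis on $y_i\meet f$ is short and self-contained.
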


\begin{proof}
  As the roles of $p$ and $q$ are symmetric,  it is enough to prove $p\doms q$. By assumption and Theorem~\ref{thm:dmt} we have $(p\restr \lmt)\doms (q\restr \lmt)$,  witnessed by some $r'$ over a small set $A$, and all coordinates of  $p$ and $q$ have the same cut $C$. Recall that $C$ must be bounded by Corollary~\ref{co:restrinv} and the last point of  Lemma~\ref{lemma:cutremarks3}. Up to enlarging $A$, we may assume that (cf.~Lemma~\ref{lemma:meetclosedmodulofinite3})
  \begin{enumerate}
  \item there is  $a\in \monster$ such that $a>C$ and, if $C$ has a maximum $g$,   such that $a$ is in the same open cone above $g$ of each coordinate of $p$ and $q$; and
  \item $\monster y$ is closed under meets and $(\dcl^{\lmt}(\monster y)\setminus \monster)\subseteq \dcl^{\lmt}(A y)$.
  \end{enumerate}
  Furthermore, up to adjoining to $y$ finitely many points of $\dcl^{\lmt}(Ay)$, we may assume  $\dcl^{\lmt}(\monster y)\setminus \monster=y$. By the second point of Lemma~\ref{lemma:cutremarks3}, this assumption does not break the hypothesis that all points of $y$ have the same graft. Fix any $r\in S_{pq}(A)$ extending $r'$, and recall that $p\cup r\proves (q\restr \lmt)\cup (q\restr A)$.  By quantifier elimination and our assumptions on $y$, we are only left to deal with the formulas $R_j(y_{i},f)$ and $R_j(f, y_{i})$,  where $f\in \monster$ and $i< \abs y$.  We have three possibilities for $y_{i}\meet f$. If  $y_{i}\meet f=y_{i}$, then $y_{i}\le f$. If instead $y_{i}\meet f\in \monster$, then there must be a point of $\monster$ in the same open cone as $y_{i}$ above $y_{i}\meet f$, because otherwise $(q\restr i)\restr \lmt$ would be of kind (II). In the only other possible case, which can only arise if $(q\restr i)\restr \lmt$ is of kind (IIIa) or (IIIb), it is easy to see that $f$ must be in the same open cone above $y_{i}\meet f$ as $a$. In each case, since the $R_j$ are on open cones, $a\in A$, and $r\in S_{pq}(A)$, the partial type $p\cup r$ decides whether $R_j(y_{i},f)$ and $R_j(f, y_{i})$ hold, and we are done.
\end{proof}
\begin{rem}
  The set of grafts of types in $\invtypes_1(\monster)$ can be identified with that of grafts of types in $\invtypes_1(\monster\restr \lmt)$. 
\end{rem}
\begin{proof}
  The natural map from the former to the latter,   well-defined by Corollary~\ref{co:restrinv}, is injective by definition of graft, and is surjective because, since $T$ is a purely binary cone-expansion of $\tdmt$, if $p\in \invtypes_1(\monster\restr \lmt)$ is of kind (Ia) or (Ib), then $p$ implies a unique type in $S_1(\monster)$, easily seen to be invariant.
\end{proof}
\begin{lemma}\label{lemma:stillorthgonal}
Let $\bla p0,n\in \invtypes(\monster)$ be such that $\Phi\coloneqq\bigcup_{i\le n} (p_i(x^i)\restr \lmt)$ is a complete type in $\tdmt.$ Then $\bigcup_{i\le n} p_i(x^i)$ is a complete type in $T$.
\end{lemma}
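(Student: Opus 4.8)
The plan is to reduce, via quantifier elimination in $T$, to deciding atomic formulas, and then to push all the non-$\lmt$ atomic information down to either $\monster$ or to a single $p_i$ using that the $R_j$ are on open cones. Fix a realization $c=c^0c^1\cdots c^n$ of $\bigcup_{i\le n}p_i$. Since $\tp^{\lmt}(c/\monster)\supseteq\Phi$ and $\Phi$ is complete, all $\lmt$-atomic formulas over $\monster$ satisfied by $c$ are determined, so it remains to show that $\bigcup_i p_i$ decides $R_j(e,f)$ for every $j$ and every $e,f\in\dcl^{\lmt}(\monster c)$ (every term evaluates into $\dcl^{\lmt}(\monster c)$ by Remark~\ref{rem:meetfacts}). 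This is automatic when $e,f$ are $\le$-comparable (then $\neg R_j(e,f)$), when $e,f\in\monster$ (decided in $\monster$), or when $e,f\in\dcl^{\lmt}(\monster c^i)$ for a single $i$ (decided by $p_i$, which is complete over $\monster$). So assume $e\parallel f$ and set $m:=e\meet f$; note that $\tp^{\lmt}(ef/\monster)$, hence $m$, is known from $\Phi$.

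Next I would carry out the cone reduction. If the open cone of $e$ above $m$ contains a point $m_e\in\monster$, then $R_j(e,f)\leftrightarrow R_j(m_e,f)$, so we may replace $e$ by $m_e$; likewise for $f$. If instead that cone contains no point of $\monster$, then $e\notin\monster$, and — arguing exactly as in the proof of Theorem~\ref{thm:expdmt}, via Lemma~\ref{lemma:cutremarks3} and the description of $\dcl^{\lmt}(\monster c)$ — there is a coordinate $c^i_k$ of $c$ with $e\le c^i_k$ and $C^{\monster}_e=C^{\monster}_{c^i_k}$; since $e>m$, the point $c^i_k$ lies in the same open cone above $m$ as $e$, so $R_j(e,f)\leftrightarrow R_j(c^i_k,f)$ and we may replace $e$ by $c^i_k$. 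After performing these replacements for both arguments, each has become a point of $\monster$ or a coordinate of $c$ (and in the latter case the corresponding open cone above $m$ is free of points of $\monster$). All resulting configurations are covered by the cases listed above, except the one where we are left with $R_j(c^i_k,c^{i'}_{k'})$ for coordinates from \emph{distinct} pieces $i\ne i'$. In that case one checks that necessarily $c^i_k\meet c^{i'}_{k'}=m$, that neither open cone of $c^i_k,c^{i'}_{k'}$ above $m$ meets $\monster$, and that $C^{\monster}_{c^i_k}=C^{\monster}_m=C^{\monster}_{c^{i'}_{k'}}$.

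Finally I would derive a contradiction with completeness of $\Phi$ in this last case. Restricting $\Phi$ to the two variables carrying $c^i_k$ and $c^{i'}_{k'}$ gives exactly $(p_i\restr k)\restr\lmt\cup(p_{i'}\restr k')\restr\lmt$, which is therefore complete; these are invariant $1$-types of $\tdmt$ by Corollary~\ref{co:restrinv}, so completeness of the union says they are weakly orthogonal. On the other hand, since their realizations $c^i_k,c^{i'}_{k'}$ have equal cut and lie in open cones above their common meet $m$ that are free of points of $\monster$, a case analysis on whether $C^{\monster}_m$ has a maximum — using the classification of invariant $1$-types, Proposition~\ref{pr:treebasic}, and the facts relating weak orthogonality to domination recalled just before it — shows that in each possibility either the configuration cannot occur for cut-structural reasons, or $(p_i\restr k)\restr\lmt$ and $(p_{i'}\restr k')\restr\lmt$ admit realizations inside a single new open cone and hence fail to decide $x^i_k\meet x^{i'}_{k'}$, contradicting weak orthogonality. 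Either way we reach a contradiction, which finishes the proof. I expect this last case analysis to be the main obstacle: one must unwind, for each pair of kinds among (Ia), (Ib), (II), (IIIa), (IIIb) and each shape of the common cut, why completeness of $\Phi$ already forbids two coordinates coming from different pieces to share an open cone not anchored at a point of $\monster$; the remaining bookkeeping (the cone reductions and the identification of $\dcl^{\lmt}(\monster c)$) is routine and mirrors the proof of Theorem~\ref{thm:expdmt}.
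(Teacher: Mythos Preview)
Your strategy is sound and shares the core reduction with the paper, but you defer the work to a final case analysis on kinds which you leave as a sketch. The paper dissolves that obstacle by frontloading a single observation: completeness of $\Phi$ forbids any coordinate of $p_i$ from sharing a graft or a sprout with any coordinate of $p_{i'}$ for $i\ne i'$ (a shared graft leaves some $x^i_k\meet a=x^{i'}_{k'}\meet a$ undecided; a shared sprout leaves $x^i_k=x^{i'}_{k'}$ undecided). Via Lemma~\ref{lemma:cutremarks3} this yields $\dcl^{\lmt}(\monster c)=\bigcup_i\dcl^{\lmt}(\monster c^i)$, so one may work directly with $y\in\dcl^{\lmt}(\monster c^i)\setminus\monster$ and $z\in\dcl^{\lmt}(\monster c^{i'})\setminus\monster$, which inherit distinct grafts/sprouts from coordinates of their respective pieces. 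The endgame is then two lines rather than a kind-by-kind analysis: if $y\meet z\in\monster$, one of $y,z$ lies in an existing cone (else both sprout from $y\meet z$); if $y\meet z\notin\monster$, the cuts are strictly nested (else same graft or sprout), so some $a\in\monster$ separates the one with larger cut from $y\meet z$. Your weak-orthogonality route is equivalent in content --- the dichotomy at the end of Proposition~\ref{pr:treebasic} is precisely ``distinct graft/sprout iff weakly orthogonal'' for nonrealised invariant $1$-types --- but the paper's phrasing buys the $\dcl$ decomposition for free and replaces your anticipated ``main obstacle'' with a two-case argument that never touches the classification of kinds.
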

\begin{proof}
  Let $b^i\models p_i$. In order for $\Phi$ to be complete in $\tdmt$, given $i< i'\le n$, no coordinate of $p_i$  can have the same graft as a coordinate of $p_{i'}$: if this was the case for $x^i_0$ and $x^{i'}_0$, say, then there would be $a\in \monster$ such that $\Phi$ does not decide whether $x^i_0\meet a=x^{i'}_0\meet a$ holds. Similarly, no coordinate of $p_i$, say $x^i_0$ again, can have the same sprout as a coordinate of $p_{i'}$, say $x^{i'}_0$, otherwise $\Phi$ does not decide whether $x^i_0=x^{i'}_0$ holds. It follows from this observation and  Lemma~\ref{lemma:cutremarks3} that $\dcl^{\lmt}(\monster b^0,\ldots, b^n)=\bigcup_{i\le n} \dcl^{\lmt}(\monster b^i)$. Therefore, we only need to show that, for each  $i<i'\le n$, each $y\in \dcl^{\lmt}(\monster x^i)\setminus \monster$ , and each $z\in \dcl^{\lmt}(\monster x^{i'})\setminus \monster$, every formula of the form $R_j(y,z)$ is decided by $p_i(x^i)\cup p_{i'}(x^{i'})$.  
  Since the $R_j$ are on open cones, it is enough to show that at least one between $y$ and $z$ must be in the same open cone above $y\meet z$ as a point of $\monster$.  Again because $\Phi$ is complete, $y$ and $z$ cannot have the same graft, nor the same sprout.

  We have two cases. Suppose first that $\Phi\proves y\meet z=g$ for some $g\in \monster$. This happens for example if $\tp(y/\monster)$  sprouts from $g$ and the graft of $\tp(z/\monster)$ is in an existing open cone above $g$, or if none of $C_y^\monster$ and $C_z^\monster$ is included in the other. Then, at least one between $y$ and $z$ must be in an open cone above $g$ represented in $\monster$, because otherwise both would be sprouting from $g$, contradicting completeness of $\Phi$.

  If instead  $\Phi\proves \text{``}y\meet z\notin \monster\text{''}$  then, up to swapping  $y$ and $z$, we must have $\Phi\proves\text{``}C_z^\monster\subsetneq C_y^\monster\text{''}$, because otherwise $y$ and $z$ have the same graft.  It follows that for some $a\in \monster$ we have  $\Phi\proves y>a>y\meet z$, and in particular $y$ is in the same open cone above $y\meet z$ as $a$.
\end{proof}

Recall that  a sort $Y$ of a multi-sorted   $\monster$ is said to be \emph{stably embedded} iff, whenever $D\subseteq \monster^{m}$ is definable,  then $D\cap Y^m$ is definable with parameters from $Y$, in the sense that it is definable with parameters when we view  $Y$ as a structure on its own, the atomic relations being the traces on  $Y$ of $\emptyset$-definable relations of $\monster$.  It is easy to obtain a proof of the following fact; the reader may find one in~\cite[Proposition~2.3.31]{mythesis}. 
\begin{fact}[$T$ arbitrary]\label{fact:stabemb}
  Let $Y$ be a stably embedded sort of $\monster$. There is an embedding of posets $\invtildeof{Y} \into \invtilde$. This embedding is a $\wort$-homomorphism, a $\nwort$-homomorphism,  and,  if $\otimes$ respects $\doms$,  an embedding of monoids.
\end{fact}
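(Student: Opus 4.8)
The plan is to exhibit the embedding as the map induced on domination monoids by a canonical lift of global types from the induced structure on $Y$ to $\monster$. Since $\monster$ is $\kappa$-saturated and $\kappa$-strongly homogeneous, the induced structure on $Y$ is again $\kappa$-saturated and $\kappa$-strongly homogeneous --- a complete type over a small $A\subseteq Y$ extends to a complete $\monster$-type concentrating on $Y$ and is realised in $Y$ by saturation of $\monster$, while a small partial elementary map of $Y$ preserves the traces on $Y$ of $\emptyset$-definable relations of $\monster$, hence is partial elementary in $\monster$ and so extends to an automorphism of $\monster$ --- so $\invtildeof Y$ may be computed in $Y$ itself. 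By stable embeddedness, for each $n$ the restriction map from global $\monster$-types concentrating on $Y^n$ to global types of $Y$ is a bijection; I write $p \mapsto \hat p$ for its inverse. The first thing to check is that if $p$ is $A$-invariant with $A \subseteq Y$ small, then $\hat p$ is $A$-invariant: every $\sigma \in \aut(\monster/A)$ restricts to an automorphism of $Y$ fixing $A$, and for an $\monster$-formula $\phi(x;m)$ with $x$ of sort $Y$ it sends a defining $Y$-formula of $\phi(\monster)\cap Y^{|x|}$ to one of $\phi(\monster)\cap Y^{|x|}$ with $\sigma(m)$ in place of $m$, so $\hat p$ decides $\phi(x;m)$ and $\phi(x;\sigma m)$ alike. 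Thus $p \mapsto \hat p$ carries global invariant types of $Y$ to global invariant types of $\monster$, preserving bases of invariance and the class $\class 0$ of realised types (realisations of $\hat p$ lie in $Y$ and are exactly those of $p$).

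The guiding idea for the rest is that everything about tuples from $Y$ over $\monster$ is already visible inside $Y$, because an $\monster$-definable subset of $Y^n$ is $Y$-definable. I would first record the consequence $\widehat{p \otimes q} = \hat p \otimes \hat q$: both sides concentrate on $Y$, are invariant, and restrict to $p \otimes q$ on $Y$-definable sets, which one sees by realising $\hat q$ by a tuple from $Y$ and unwinding the definition of $\otimes$, using that $\tp(b/A)$ for $b$ from $Y$ is the same whether read in $\monster$ or in $Y$. Next, $\hat p \wort \hat q$ holds iff $\hat p \cup \hat q$ is complete iff --- since completeness is detected on $Y$-definable sets --- $p \cup q$ is complete, i.e.\ $p \wort q$; so the lift preserves and reflects $\wort$.

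The main work, and the step I expect to be the only genuine obstacle, is that $p \mapsto \hat p$ preserves \emph{and} reflects $\doms$. Preservation is easy: a witness $r \in S_{pq}(A)$ to $p \doms q$ in $Y$, with $A \subseteq Y$ small, lifts to a complete $\monster$-type $\hat r$ over $A$ concentrating on $Y$, and $\hat p \cup \hat r \proves \hat q$ because any realisation of the left-hand side is a tuple from $Y$ satisfying $p \cup r$ (each $Y$-formula of $\hat p$, resp.\ $\hat r$, belongs to $p$, resp.\ $r$), hence satisfying $q$, and $q$, $\hat q$ have the same $Y$-formulas. Reflection is the delicate point: a witness $r \in S_{pq}(A)$ to $\hat p \doms \hat q$ in $\monster$ lives over a small $A \subseteq \monster$ not necessarily inside $Y$, so it cannot simply be restricted. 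Instead I would pass to its trace on $Y$: for each $\monster$-formula $\chi(x,y)$ over $A$ pick a $Y$-formula with a finite tuple of parameters $b_\chi$ from $Y$ defining $\chi(\monster) \cap Y^{|x|+|y|}$, gather all the $b_\chi$ together with a small base of invariance of $\hat p, \hat q$ lying in $Y$ into a small $A' \subseteq Y$, and let $r' \in S_{pq}(A')$ be the complete type of $Y$ thereby determined; the preservation argument run in reverse then gives $p \cup r' \proves q$, so $p \doms q$ in $Y$.

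Granting preservation and reflection of $\doms$, the lift descends to a well-defined, injective, order-preserving and order-reflecting map $\invtildeof Y \into \invtilde$, which by the second paragraph is also a $\wort$- and a $\nwort$-homomorphism. Finally, if $\otimes$ respects $\doms$ in $\monster$, then the transfer above shows it respects $\doms$ in $Y$ as well, so that $\invtildeof Y$ is genuinely a monoid; and since $\widehat{p \otimes q} = \hat p \otimes \hat q$ identically and the lift preserves $\class 0$, the map is an embedding of monoids. To sum up: the only point needing real thought is the reflection step --- converting a domination witness over arbitrary small parameters into one over parameters from $Y$ --- and all the rest is routine bookkeeping with stable embeddedness.
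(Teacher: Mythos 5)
Your proposal is correct and follows the natural route — the bijection, via stable embeddedness, between invariant global types of the induced structure on $Y$ and invariant global $\monster$-types concentrating on $Y$, with transfer of $\otimes$ and $\wort$ and with domination preserved by lifting witnesses and reflected by replacing a witness over arbitrary small parameters with its trace over a small subset of $Y$ — which is essentially the argument of the proof the paper points to (it proves nothing in-text, citing \cite[Proposition~2.3.31]{mythesis}). The steps you label routine (consistency of the lifted/traced witness with $(\hat p\restr A)\cup(\hat q\restr A)$, and that realisations of $p\cup r'$ may be taken inside the sort of an elementary extension of $\monster$) are indeed routine and do go through.
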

For  $g\in \monster$, denote by $O_g$ the  set of open cones above $g$ equipped with the $\set{R_j\mid j\in J}$-structure induced by $\monster$. This may be regarded as an imaginary  sort of the expansion of $\monster$ obtained by naming the point $g$, and   each type $p\in S_n(O_g)$  may be seen as the pushforward under the projection map of a suitable $q\in S_n(\monster)$ with all non-realised coordinates sprouting from $g$. Since $T$ eliminates quantifiers, $q$ is axiomatised by its quantifier-free part. It follows easily 
that the same is true of $p$, and therefore $\Th(O_g)$ eliminates quantifiers in a binary language, hence is (weakly) binary.  By Fact~\ref{fact:wbinwd}, $I_g\coloneqq\invtildeof{O_g}$ is well-defined.

\begin{thm}\label{thm:invtildexp}
  Let $T$ be a purely binary cone-expansion of $\tdmt$, and let $X$ be the set of grafts of types in $\invtypes_1(\monster)$. Then
  \[
    \invtilde\cong \pfin(X)\oplus\bigoplus_{g\in \monster} I_g
  \]
\end{thm}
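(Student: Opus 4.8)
The plan is to build the isomorphism by the same strategy used for Theorem~\ref{thm:dmt}, but with the copies of $\mathbb N$ replaced by the monoids $I_g=\invtildeof{O_g}$, using the structural results already established. First I would argue that every invariant type is domination-equivalent to a product of ``homogeneous'' pieces: by Fact~\ref{fact:wortnipinv}, Corollary~\ref{co:restrinv}, Lemma~\ref{lemma:stillorthgonal}, and the analysis of $\tdmt$, an arbitrary $p\in\invtypes(\monster)$ splits modulo $\wort$ (hence modulo $\domeq$, using the lemmas on weak orthogonality) into subtypes whose coordinates all share a single graft or all sprout from a single fixed $g$. By Lemma~\ref{lemma:stillidempotent}, a subtype all of whose coordinates have a common graft is idempotent and contributes one element of $\pfin(X)$. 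A subtype all of whose coordinates sprout from $g$ is, after naming $g$, (the pullback of) a type in $\invtypes(O_g)$; by Fact~\ref{fact:stabemb} applied to the stably embedded imaginary sort $O_g$ (stable embeddedness being the content of the paragraph preceding the theorem, via quantifier elimination and the $R_j$ being on open cones), its $\domeq$-class lands in the image of the embedding $I_g\into\invtilde$. Thus every class is a finite sum of a ``graft part'' in $\pfin(X)$ and finitely many ``sprout parts'' in the various $I_g$, which gives a surjective monoid homomorphism $\pfin(X)\oplus\bigoplus_{g}I_g\twoheadrightarrow\invtilde$.

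Next I would check this map is well-defined and a homomorphism: well-definedness of the $\pfin(X)$-component reduces to Lemma~\ref{lemma:stillidempotent} and the remark identifying grafts in $\invtypes_1(\monster)$ with those in $\invtypes_1(\monster\restr\lmt)$; well-definedness of the $I_g$-components is exactly Fact~\ref{fact:stabemb}; and additivity follows because $\otimes$ respects $\domeq$ (Theorem~\ref{thm:expdmt} plus Fact~\ref{fact:wbinwd}) together with the fact that pieces attached to distinct grafts or distinct $g$'s are weakly orthogonal, so their product is their ``sum'' with no interaction. Compatibility of $\doms$ in the two directions is handled the same way. For injectivity I would show that the three kinds of data are recoverable from the $\domeq$-class: distinct grafts give weakly orthogonal, non-domination-equivalent $1$-types (by Proposition~\ref{pr:treebasic} and Lemma~\ref{lemma:stillorthgonal}), and for each fixed $g$ the sprout part is recovered by pushing forward along the quotient map to $O_g$, so one reads off an element of $I_g$; the embedding $I_g\into\invtilde$ of Fact~\ref{fact:stabemb} being injective, and the images for different $g$ and for $\pfin(X)$ interacting only via $\wort$, the whole sum is internally recovered. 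More carefully: given $\sum_{g}\alpha_g+F$ mapping to $\class 0$'s worth of data, the $\wort$-decomposition forces each $\alpha_g$ and $F$ to map to the minimum separately, whence each is trivial.

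The main obstacle I expect is \emph{injectivity}, specifically showing that the ``sprout part over $g$'' is genuinely independent of everything else and is faithfully captured by the pushforward to $O_g$ — i.e.\ that the embedding of Fact~\ref{fact:stabemb} is compatible with the decomposition, so that no collapsing occurs between $I_g$ and $I_{g'}$ for $g\ne g'$, nor between any $I_g$ and $\pfin(X)$. This requires knowing that a product of pieces attached to pairwise-distinct ``locations'' (grafts and sprouts) is, modulo $\domeq$, determined coordinatewise; the key input is Lemma~\ref{lemma:stillorthgonal} (completeness of the union of the $\lmt$-reducts lifts to completeness in $T$), which upgrades ``pairwise weakly orthogonal'' to ``the big union is a single type'', so that domination on the product decomposes as domination on each factor. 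Combined with the fact that the pushforward $\invtypes(\monster)\to\invtypes(O_g)$ preserves and reflects $\doms$ on sprout-from-$g$ types (again because $R_j$ are on open cones and the tree part contributes nothing new once $g$ is named), this closes the argument. The remaining verifications — that the sprout part of $\tdmt$ itself is $\mathbb N$ recovering Theorem~\ref{thm:dmt} as the special case $O_g=$ a pure infinite set, and the routine bookkeeping of finite supports — are immediate.
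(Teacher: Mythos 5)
Your plan follows essentially the same route as the paper: decompose via Lemma~\ref{lemma:stillorthgonal} into pieces whose coordinates all share a graft or all share a sprout, use Lemma~\ref{lemma:stillidempotent} to obtain the $\pfin(X)$ summand, and identify the sprout-from-$g$ part with $I_g$ via the pushforward to $O_g$ together with Fact~\ref{fact:stabemb}. The only point where the paper is more careful is the step you assert as ``the pushforward preserves and reflects $\doms$'': since $O_g$ is an imaginary sort, the paper adjoins it (and a constant for $g$) to form an auxiliary structure $\monster_g$, embeds both $\invtilde$ and $I_g$ into $\invtildeof{\monster_g}$ by stable embeddedness, and proves $p\domeq q$ for $q$ the pushforward by exhibiting a witness $r$ containing the equations $y_i=\pi_g x_i$, noting that the only information lost by the projection (relations among coordinates in the same new cone) is recorded in $p\restr\emptyset\subseteq r$.
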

\begin{proof}
Recall that  by  Theorem~\ref{thm:expdmt} $\invtilde$ is well-defined, and that  by Corollary~\ref{co:restrinv} taking restrictions to $\lmt$ preserves invariance.    By Lemma~\ref{lemma:stillorthgonal}, $\invtilde$ is generated by the $\domeq$-classes of those types $p$ where  coordinates of $p$ have all the same graft, or have all the same sprout.  If all coordinates of $p$ have the same graft, by Lemma~\ref{lemma:stillidempotent} $p$ is domination-equivalent to any $1$-type with such a graft, and by using Lemma~\ref{lemma:stillorthgonal} a second time we see that $\pfin(X)$ embeds in $\invtilde$. Again by Lemma~\ref{lemma:stillorthgonal}, $\invtilde=\pfin(X)\oplus\bigoplus_{g\in \monster}\tilde I_g$, where $\tilde I_g$ is the monoid of types whose every  coordinate sprouts from $g$.

  We are only left to show that $\tilde I_g\cong I_g$. Fix $g\in \monster$. Since $\invtilde$ does not change after naming a small number of constants, we may add to $L$  a constant symbol to be interpreted as  $g$. For the time being, we also adjoin to the language a sort for $O_g$ and its natural projection map $\pi_g$. Call the resulting structure $\monster_g$.  Clearly $\monster$ is stably embedded in $\monster_g$, so by Fact~\ref{fact:stabemb} we have an embedding $\invtilde\into\invtildeof{\monster_g}$. Similarly, $O_g$ is stably embedded, hence  $\invtildeof{O_g}=I_g$ embeds in $\invtildeof{\monster_g}$.   Let $p$ be a type with all coordinates sprouting from $g$ (different coordinates might be in different open cones), and let $q$ be the pushforward of $p$ along $\pi_g$. Clearly $p\doms q$, and if we show that $q\doms p$ we may simply conclude by discarding the sort $O_g$ and forgetting about the new constant symbol. That $q\doms p$ is easily seen to be witnessed by any $r$ containing all the formulas  $y_i=\pi_gx_i$ for $i<\abs x$: the only information lost when taking the projection concerns points in the same new open cone, but this information is in $r$.  For instance, if  $x_0\meet x_1>g$, we need to recover whether $R_j(x_0,x_1)$ holds, and whether any inequality holds between $x_0$ and $x_1$. More generally, the information we need to recover is implied by $p\restr \emptyset$, which is included in $r$ by Definition~\ref{defin:domination}.
\end{proof}


\begin{thebibliography}{Men20b}
\bibitem[AN98]{relrelbet}
S.A. Adeleke and Peter~M. Neumann.
\newblock Relations related to betweenness: their structure and automorphisms.
\newblock {\em Memoirs of the American Mathematical Society}, 131(623), 1998.

\bibitem[Bey10]{rhgpsf}
Özlem Beyarslan.
\newblock Random hypergraphs in pseudofinite fields.
\newblock {\em Journal of the Institute of Mathematics of Jussieu},
  9(1):29–47, 2010.

\bibitem[Cam87]{treelike}
Peter~J. Cameron.
\newblock Some treelike objects.
\newblock {\em The Quarterly Journal of Mathematics}, 38(2):155--183, 1987.

\bibitem[DHM89]{dhmdmt}
M.~Droste, W.~C. Holland, and H.~D. Macpherson.
\newblock Automorphism groups of infinite semilinear orders (i).
\newblock {\em Proceedings of the London Mathematical Society}, 58(3):454--478,
  1989.

\bibitem[Dro85]{droste}
Manfred Droste.
\newblock Structure of partially ordered sets wih transitive automorphism
  groups.
\newblock {\em Memoirs of the American Mathematical Society}, 57(334), 1985.

\bibitem[Dur80]{duret}
Jean-Louis Duret.
\newblock Les corps faiblement algebriquement clos non separablement clos ont
  la propriete d'independance.
\newblock In Leszek Pacholski, Jedrzej Wierzejewski, and Alec~J. Wilkie,
  editors, {\em Model Theory of Algebra and Arithmetic}, pages 136--162,
  Berlin, Heidelberg, 1980. Springer Berlin Heidelberg.

\bibitem[EK19]{estevankaplan}
Pedro~A. Estevan and Itay Kaplan.
\newblock Non-forking and preservation of {NIP} and dp-rank.
\newblock \url{https://arxiv.org/abs/1909.04626}, preprint, 2019.

\bibitem[Hod93]{hodges}
Wilfrid Hodges.
\newblock {\em Model Theory}, volume~42 of {\em Encyclopedia of Mathematics and
  its Applications}.
\newblock Cambridge University Press, 1993.

\bibitem[Hol01]{hollyvftrees}
Jan~E. Holly.
\newblock Pictures of ultrametric spaces, the $p$-adic numbers, and valued
  fields.
\newblock {\em The American Mathematical Monthly}, 108(8):721--728, 2001.

\bibitem[KRS19]{autdmt}
Itay Kaplan, Tomasz Rzepecki, and Daoud Siniora.
\newblock On the automorphism group of the universal homogeneous meet-tree.
\newblock \url{https://arxiv.org/abs/1904.05144}, preprint, 2019.

\bibitem[Men20a]{mythesis}
Rosario Mennuni.
\newblock {\em Invariant types in model theory}.
\newblock PhD thesis. University of Leeds, 2020.
\newblock \url{http://etheses.whiterose.ac.uk/26807/}.

\bibitem[Men20b]{invbartheory}
Rosario Mennuni.
\newblock Product of invariant types modulo domination--equivalence.
\newblock {\em Archive for Mathematical Logic}, 59:1--29, 2020.

\bibitem[MS96]{ominvar}
Dugald Macpherson and Charles Steinhorn.
\newblock On variants of o-minimality.
\newblock {\em Annals of Pure and Applied Logic}, 79(2):165--209, 1996.

\bibitem[Per73]{peretyiatkin}
M.~G. Peretyiatkin.
\newblock On complete theories with a finite number of denumerable models.
\newblock {\em Algebra i Logika}, 12:550--576, 1973.

\bibitem[RS17]{rideausimon}
Silvain Rideau and Pierre Simon.
\newblock Definable and invariant types in enrichments of {NIP} theories.
\newblock {\em The Journal of Symbolic Logic}, 82(1):317–324, 2017.

\bibitem[She90]{classificationtheoryshelah}
Saharon Shelah.
\newblock {\em Classification Theory}, volume~92 of {\em Studies in Logic and
  the Foundations of Mathematics}.
\newblock North Holland, 1990.
\newblock Revised Edition.

\bibitem[Sim11]{simondmt}
Pierre Simon.
\newblock On dp-minimal ordered structures.
\newblock {\em The Journal of Symbolic Logic}, 76(2):448--460, 2011.

\bibitem[Sim14]{invnip}
Pierre Simon.
\newblock Invariant types in {NIP} theories.
\newblock {\em Journal of Mathematical Logic}, 15(02):1550006, 2014.

\bibitem[Sim15]{simon}
Pierre Simon.
\newblock {\em A guide to {NIP} theories}, volume~44 of {\em Lecture Notes in
  Logic}.
\newblock Cambridge University Press, 2015.

\bibitem[Woo78]{woodrowtrees}
Robert Woodrow.
\newblock Theories with a finite number of countable models.
\newblock {\em The Journal of Symbolic Logic}, 43(3):442--455, 09 1978.

\end{thebibliography}
\end{document}